\newtheorem{theorem}{Theorem}
\newtheorem{definition}[theorem]{Definition}
\newtheorem{lemma}[theorem]{Lemma}
\newenvironment{proof}[1][Proof]{\noindent\textbf{#1.} }{\ \rule{0.5em}{0.5em}}
\begin{document}

\title{Approximate discrete-time schemes for the estimation of diffusion
processes from complete observations}
\author{J.C. Jimenez \\
{\small Instituto de Cibern\'{e}tica, Matem\'{a}tica y F\'{\i}sica}\\
{\small Departamento de Matem\'{a}tica Interdisciplinaria}\\
{\small Calle 15, no. 551, Vedado, La Habana, Cuba}\\
{\small e-mail: jcarlos@icimaf.cu}} \maketitle

\begin{abstract}
In this paper, a modification of the conventional approximations to
the quasi-maximum likelihood method is introduced for the parameter
estimation of diffusion processes from discrete observations. This
is based on a convergent approximation to the first two conditional
moments of the diffusion process through discrete-time schemes. It
is shown that, for finite samples, the resulting approximate
estimators converge to the quasi-maximum likelihood one when the
error between the discrete-time approximation and the diffusion
process decreases. For an increasing number of observations, the
approximate estimators are asymptotically normal distributed and
their bias decreases when the mentioned error does it. A simulation
study is provided to illustrate the performance of the new
estimators. The results show that, with respect to the conventional
approximate estimators, the new ones significantly enhance the
parameter estimation of the test equations. The proposed estimators
are intended for the recurrent practical situation where a nonlinear
stochastic system should be identified from a reduced number of
complete observations distant in time.
\end{abstract}

\section{Introduction}
The statistical inference for diffusion processes described by
Stochastic Differential Equations (SDEs) is currently a subject of
intensive researches. A basic difficulty of this statistical problem
is that, except for a few simple examples, the joint distribution of
the discrete-time observations of the process has unknown
closed-form. To overcome this, a number of estimators based on
analytical and simulated approximations have been developed during
more than three decades. Such methods are the focus of a growing
literature. See, for instance, the review papers by Bibby and
Sorensen (1996), Prakasa-Rao (1999), Nielsen et al. (2000) and
Jimenez et al. (2006).

In particular, the present paper deals with the class of
quasi-maximum likelihood (QML) estimators for the parameter
estimation of SDEs given a time series of complete observations.
These are the estimators obtained by maximizing a normal
log-likelihood function when the assumption of normality is not
satisfied and all the components of the diffusion process are
discretely observed. The simplest approximations to this class of
estimators are derived, for SDEs with additive noise, from the
likelihood of the discrete-time process defined by a numerical
integrator. Typically, they are derived from the Euler-Maruyama
scheme (Prakasa Rao 1983, Yoshida 1992, Florens-Zmirou 1989) or from
the Local Linearization (LL) one (Ozaki 1985, 1992; Shoji \& Ozaki
1997, 1998). It have been demonstrated that, when the distance
between two consecutive observations remains fixed, these
approximate QML estimators are asymptotically biased when the number
of observations increases (Florens-Zmirou 1989). However, a number
of comparative studies among different estimation methods have shown
that, in practical situations in which the distance between
observations is small enough, the approximate QML estimators based
on LL integrators display the better performance due to their
simplicity, computational efficiency and negligible bias (see, e.g.,
Shoji \& Ozaki 1997, Durham \& Gallant 2002, Singer 2002, Hurn et
al. 2007). Therefore, any modification to these approximate QML
methods that yields a bias reduction will be useful. In this
direction, two methods have early been proposed. For the estimators
based on the Euler-Maruyama integrator, Clement (1995) introduced a
correction to the bias by means of simulations. Whereas, on the
basis of Taylor expansions of the first two conditional moments of
the discrete-time process, Kessler (1997) archives similar results.
Depending of the specific SDE to be estimated, the first method
could be computationally time demanding, while the second one could
be affected by numerical instabilities resulting from a high order
Taylor expansion. More recently, Huang (2011) proposed new
estimators based on high-order numerical integrators, which improve
the accuracy of the approximation for the first two conditional
moments and can be straightforward applied to SDEs with
multiplicative noise.

A common feature of the approximate QML methods mentioned above is
that, once the observations are given, the error between the
approximate and the exact moments of the diffusion is fixed and
completely determined by the distance between observations. Clearly,
this fixes the bias of the QML estimation for finite samples and
obstructs its asymptotic correction when the number of observations
increases.

In this paper, an alternative modification of the conventional
approximations to the QML estimator for diffusion processes is
introduced, which is oriented to reduce and control the estimation
bias. This is based on a recursive computation of the first two
conditional moments of discrete-time approximations converging to
the diffusion process between two consecutive observations. It is
shown that, for finite samples, the resulting approximate estimators
converge to the exact QML estimator when the error between the
discrete-time approximation and the diffusion process decreases. For
an increasing number of observations, the approximate estimators are
asymptotically normal distributed and their bias decreases when the
above mentioned error does it. As a particular instance, the
approximate QML estimators designed with the well-known Local Linear
approximations for SDEs are presented. Their convergence, practical
algorithms and performance in simulations are also considered in
detail. The simulations show that, with respect to the conventional
QML estimators, the new approximate estimators significantly enhance
the parameter estimation of the test equations given a reduced
number of discrete observations distant in time, which is a typical
situation in many practical inference problems.

The paper is organized as follows. In section 2, basic notations and
definitions are presented. In section 3, the new approximate
estimators are defined and some of their properties are studied. As
example, the order-$\beta $ QML estimator based on the Local
Linearization schemes is presented in Section 4, as well as
algorithms for its practical implementation. In the last section,
the performance of the new estimators is illustrated with various
examples.

\section{Notation and preliminary\label{PLK Method Section}}

Let $(\Omega ,\mathcal{F},P)$ be the underlying complete probability
space and $\{\mathcal{F}_{t},$ $t\geq t_{0}\}$ be an increasing
right continuous
family of complete sub $\sigma $-algebras of $\mathcal{F}$. Consider a $d$%
-dimensional diffusion process $\mathbf{x}$ defined by the following
stochastic differential equation

\begin{equation}
d\mathbf{x}(t)=\mathbf{f}(t,\mathbf{x}(t);\mathbf{\theta }%
)dt+\sum\limits_{i=1}^{m}\mathbf{g}_{i}(t,\mathbf{x}(t);\mathbf{\theta })d%
\mathbf{w}^{i}(t)  \label{SDE PLK}
\end{equation}%
for $t\geq t_{0}\in
\mathbb{R}
$, where $\mathbf{f}$ and $\mathbf{g}_{i}$ are differentiable functions, $%
\mathbf{w=(\mathbf{w}}^{1},..,\mathbf{w}^{m}\mathbf{)}$ is an $m$%
-dimensional $\mathcal{F}_{t}$-adapted standard Wiener process, $\mathbf{%
\theta }\in \mathcal{D}_{\theta }$ is a vector of parameters, and $\mathcal{D%
}_{\theta }\subset
\mathbb{R}
^{p}$ is a compact set. Linear growth, uniform Lipschitz and
smoothness conditions on the functions $\mathbf{f}$ and
$\mathbf{g}_{i}$ that ensure the existence and uniqueness of a
strong solution of (\ref{SDE PLK}) with bounded moments are assumed
for all $\mathbf{\theta }\in \mathcal{D}_{\theta }$.

Denote by $\mathbf{z}$ the diffusion process defined by (\ref{SDE
PLK}) with $\mathbf{\theta =\theta }_{0}\in \mathcal{D}_{\theta }$,
and suppose that $M$ observations of the process $\mathbf{z}$ on an
increasing sequence of time instants
$\{t\}_{M}=\{t_{k}:t_{k}<t_{k+1}$, $k=0,1,..,M-1\}$ are given. More
precisely, denote by $\mathbf{z}_{k}$ the observation of the process $%
\mathbf{z}$ at $t_{k}$ for all $t_{k}\in \{t\}_{M}$ and by $Z=\{\mathbf{z}%
_{0},..,\mathbf{z}_{M-1}\}$ the sequence of these observations.

The inference problem to be consider here is the estimation of the
parameter $\mathbf{\theta }_{0}$ of the SDE (\ref{SDE PLK}) given
the time series $Z$. In particular, let us consider the
quasi-maximum likelihood estimator defined by
\begin{equation}
\widehat{\mathbf{\theta }}_{M}=\arg \{\underset{\mathbf{\theta }}{\mathbf{%
\min }}U_{M}(\mathbf{\theta },Z)\}  \label{QML estimator}
\end{equation}%
where
\begin{equation*}
U_{M}(\mathbf{\theta },Z)=(M-1)\ln (2\pi )+\sum\limits_{k=1}^{M-1}\ln (\det (%
\mathbf{\Sigma }_{k}))+(\mathbf{z}_{k}-\mathbf{\mu }_{k})^{\intercal }(%
\mathbf{\Sigma }_{k})^{-1}(\mathbf{z}_{k}-\mathbf{\mu }_{k}),
\end{equation*}%
and $\mathbf{\mu }_{k}=E(\mathbf{x}(t_{k})|\mathbf{z}_{k-1})$ and $\mathbf{%
\Sigma }_{k}=E(\mathbf{x}(t_{k})\mathbf{x}^{\intercal }(t_{k})|\mathbf{z}%
_{k-1})-\mathbf{\mu }_{k}\mathbf{\mu }_{k}^{\intercal }$ denote the
conditional mean and variance of the diffusion process $\mathbf{x}$
at $t_{k}
$ given $\mathbf{z}_{k-1}$, for all $t_{k-1},t_{k}\in \{t\}_{M}$ and $%
\mathbf{\theta }\in \mathcal{D}_{\theta }$. Because the first two
conditional moments of $\mathbf{x}$ are correctly specified, the
score of the normal log-likelihood satisfies the martingale
difference property, and so the QML estimator (\ref{QML estimator})
is consistent and has an
asymptotically normal distribution. See Bollerslev \& Wooldridge\textbf{\ }%
(1992) and Wooldridge\textbf{\ }(1994) for ergodic and no ergodic
processes, respectively.

In general, since the conditional mean and variance of equation
(\ref{SDE
PLK}) have not explicit formulas, approximations to them are needed. If $%
\widetilde{\mathbf{\mu }}_{k}$ and $\widetilde{\mathbf{\Sigma
}}_{k}$ are approximations to $\mathbf{\mu }_{k}$ and
$\mathbf{\Sigma }_{k}$, then the estimator
\begin{equation*}
\widehat{\mathbf{\vartheta }}_{M}=\arg \{\underset{\mathbf{\theta }}{\mathbf{%
\min }}\widetilde{U}_{M}\mathbf{(\theta },Z)\},
\end{equation*}%
with%
\begin{equation*}
\widetilde{U}_{M}(\mathbf{\theta },Z)=(M-1)\ln (2\pi )+\sum\limits_{k=1}^{M-1}\ln (\det (\widetilde{\mathbf{\Sigma }}_{k}))+(%
\mathbf{z}_{k}-\widetilde{\mathbf{\mu }}_{k})^{\intercal }(\widetilde{%
\mathbf{\Sigma }}_{k})^{-1}(\mathbf{z}_{k}-\widetilde{\mathbf{\mu
}}_{k})
\end{equation*}%
provides an approximation to the quasi-maximum likelihood estimator $%
\widehat{\mathbf{\theta }}_{M}$.

Approximate estimators of this type have early been considered in a
number of papers (Prakasa Rao 1983; Florens-Zmirou 1989; Yoshida
1992; Ozaki 1985, 1992; Shoji \& Ozaki 1997, 1998) and recently in
Huang (2011). In all of
them, the approximate mean $\widetilde{\mathbf{\mu }}_{k}=E(\mathbf{y}_{k}|%
\mathbf{z}_{k-1})$ and variance $\widetilde{\mathbf{\Sigma }}_{k}=E((\mathbf{%
y}_{k}-\widetilde{\mathbf{\mu }}_{k})(\mathbf{y}_{k}-\widetilde{\mathbf{\mu }%
}_{k})^{\intercal }|\mathbf{z}_{k-1})$ are derived from a
discrete-time
scheme $\mathbf{y}_{k}=\mathbf{z}_{k-1}+\phi (t_{k-1},\mathbf{z}%
_{k-1},t_{k}-t_{k-1})$ that approximate to $\mathbf{x}(t_{k})$ in
just one step of size $t_{k}-t_{k-1}$ from the observation
$\mathbf{z}_{k-1}$. Indistinctly, these estimators are called
pseudo-likelihood estimators, or minimum contrast estimators or
prediction error estimators depending of the inferential
considerations that want to be emphasized. It has been proved
(Florens-Zmirou, 1989) that, for the time partition
$\{t\}_{M}=\left\{ t_{k}=k\mathbf{\delta }:k=0,1,\cdots ,M-1\right\}
$ with $\mathbf{\delta }>0$
fixed, these estimators are biased as $M\mathbf{\delta }\rightarrow \infty $%
. Contrary, they are asymptotically unbiased on the time partition $%
\{t\}_{M}=\left\{ t_{k}=k\mathbf{\delta }_{M}:k=0,1,\cdots
,M-1\right\} $ in
the case that $M\mathbf{\delta }_{M}\rightarrow \infty $, but with $M\mathbf{%
\delta }_{M}^{3}\rightarrow 0$ (or more accurately with $M\mathbf{\delta }%
_{M}^{2}\rightarrow 0$ as in Yoshida, 1992). However, last
restriction on $M$ and $\mathbf{\delta }_{M}$ imposes too strong
relation among the number of observations and the time distance
between them, which is very inconvenient from a practical viewpoint.
Further note that, once the data $Z$ are given (and
so the time partition $\{t\}_{M}$ is specified), the error between $\mathbf{y%
}_{k}$ and $\mathbf{x}(t_{k})$ is completely settled by
$t_{k}-t_{k-1}$ and can not be reduced. In this way, the difference
between the approximate quasi-maximum likelihood estimator
$\widehat{\mathbf{\vartheta }}_{M}$ and the exact one
$\widehat{\mathbf{\theta }}_{M}$ can not be reduced neither.

Denote by $\mathcal{C}_{P}^{l}(\mathbb{R}^{d},\mathbb{R})$ the space
of $l$ time continuously differentiable functions
$g:\mathbb{R}^{d}\rightarrow \mathbb{R}$ for which $g$ and all its
partial derivatives up to order $l$ have polynomial growth.

\section{Order-$\protect\beta $ quasi-maximum likelihood estimator}

Let $\left( \tau \right) _{h>0}=\{\tau _{n}:\tau _{n+1}-\tau _{n}\leq h,$ $%
n=0,1,\ldots ,N\}$ be a time discretization of $[t_{0},t_{M-1}]$ such that $%
\left( \tau \right) _{h}\supset \{t\}_{M}$, and $\mathbf{y}_{n}$ be
the approximate value of $\mathbf{x}(\tau _{n})$ obtained from a
discretization of the equation (\ref{SDE PLK}) for all $\tau _{n}\in
\left( \tau \right)
_{h}$. Let us consider the continuous time approximation $\mathbf{y}=\{%
\mathbf{y}(t),$ $t\in \lbrack t_{0},t_{M-1}]:\mathbf{y}(\tau _{n})=\mathbf{y}%
_{n}$ for all $\tau _{n}\in \left( \tau \right) _{h}\}$ of
$\mathbf{x}$ with
initial conditions%
\begin{equation*}
E\left( \mathbf{y}(t_{0})\text{{\LARGE \TEXTsymbol{\vert}}}\mathcal{F}%
_{t_{0}}\right) =E\left( \mathbf{x}(t_{0})\text{{\LARGE \TEXTsymbol{\vert}}}%
\mathcal{F}_{t_{0}}\right) \text{ \ \ and \ }E\left( \mathbf{y}(t_{0})%
\mathbf{y}^{\intercal }(t_{0})\text{{\LARGE \TEXTsymbol{\vert}}}\mathcal{F}%
_{t_{0}}\right) =E\left( \mathbf{x}(t_{0})\mathbf{x}^{\intercal }(t_{0})%
\text{{\LARGE \TEXTsymbol{\vert}}}\mathcal{F}_{t_{0}}\right) ;\text{
}
\end{equation*}%
satisfying the bound condition
\begin{equation}
E\left( \left\vert \mathbf{y}(t)\right\vert ^{2q}\text{{\LARGE \TEXTsymbol{%
\vert}}}\mathcal{F}_{t_{k}}\right) \leq L  \label{LMVF6}
\end{equation}%
for all $t\in \lbrack t_{k},t_{k+1}]$; and the weak convergence
criteria
\begin{equation}
\underset{t_{k}\leq t\leq t_{k+1}}{\sup }\left\vert E\left( g(\mathbf{x}(t))%
\text{{\LARGE \TEXTsymbol{\vert}}}\mathcal{F}_{t_{k}}\right) -E\left( g(%
\mathbf{y}(t))\text{{\LARGE
\TEXTsymbol{\vert}}}\mathcal{F}_{t_{k}}\right) \right\vert \leq
L_{k}h^{\beta }  \label{LMVF7}
\end{equation}%
for all $t_{k},t_{k+1}\in \{t\}_{M}$ and $\mathbf{\theta }\in \mathcal{D}%
_{\theta }$, where $g\in \mathcal{C}_{P}^{2(\beta +1)}(\mathbb{R}^{d},%
\mathbb{R})$, $L$ and $L_{k}$ are positive constants, $\beta \in
\mathbb{N}
_{+}$, and $q=1,2...$. The process $\mathbf{y}$ defined in this way
is typically called order-$\beta $ approximation to $\mathbf{x}$ in
weak sense
(Kloeden \& Platen, 1999). In addition, the second conditional moment of $%
\mathbf{y}$ is assumed to be positive definite and continuous for all $%
\mathbf{\theta }\in \mathcal{D}_{\theta }$.

When an order-$\beta $ approximation to the solution of equation
(\ref{SDE PLK}) is chosen, the following approximate quasi-maximum
likelihood estimator can be naturally defined.

\begin{definition}
\label{Definition order-B QML estimator}Given a time series $Z$ of
$M$ observations of the SDE (\ref{SDE PLK}) with $\mathbf{\theta
=\theta }_{0}$ on $\{t\}_{M}$, the order-$\beta $ quasi-maximum
likelihood estimator for
the parameters of (\ref{SDE PLK}) is defined by%
\begin{equation}
\widehat{\mathbf{\theta }}_{M}(h)=\arg \{\underset{\mathbf{\theta }}{\mathbf{%
\min }}U_{M,h}\mathbf{(\theta },Z)\},  \label{order-B QML estimator}
\end{equation}%
where%
\begin{equation*}
U_{M,h}(\mathbf{\theta },Z)=(M-1)\ln (2\pi )+\sum\limits_{k=1}^{M-1}\ln (\det (%
\mathbf{\Sigma }_{h,k}))+(\mathbf{z}_{k}-\mathbf{\mu }_{h,k})^{\intercal }(%
\mathbf{\Sigma }_{h,k})^{-1}(\mathbf{z}_{k}-\mathbf{\mu }_{h,k}),
\end{equation*}%
\textbf{$\mu $}$_{h,k}=E(\mathbf{y}(t_{k})|\mathbf{z}_{k-1})$, $\mathbf{%
\Sigma }_{h,k}=E(\mathbf{y}(t_{k})\mathbf{y}^{\intercal }(t_{k})|\mathbf{z}%
_{k-1})-\mathbf{\mu }_{h,k}\mathbf{\mu }_{h,k}^{\intercal }$,
$\mathbf{y}$ is an order-$\beta $ approximation to the solution of
(\ref{SDE PLK}) in
weak sense such that $E(\mathbf{y}(t_{k})|\mathbf{z}_{k})=\mathbf{z}_{k}$ and $E(%
\mathbf{y}(t_{k})\mathbf{y}^{\intercal }(t_{k})|\mathbf{z}_{k})=\mathbf{z}%
_{k}\mathbf{z}_{k}^{\intercal }$ for all $t_{k}\in \{t\}_{M}$, and
$h$ is the maximum stepsize of the time discretization $\left( \tau
\right) _{h}\supset \{t\}_{M}$ associated to $\mathbf{y}$.
\end{definition}

In principle, according to the above definition, any kind of approximation $%
\mathbf{y}$ converging to $\mathbf{x}$ in a weak sense can be used
to construct an approximate order-$\beta $ quasi-maximum likelihood
estimator, e.g., those considered in Kloeden \& Platen (1999). In
this way, the Euler-Maruyama, the Local Linearization and any high
order numerical scheme
for SDEs might be used as well, but the approximations \textbf{$\mu $}$%
_{h,k} $ and $\mathbf{\Sigma }_{h,k}$ will be now derived from the
conditional moments of the numerical scheme after various iterations
with stepsizes lower than $t_{k}-t_{k-1}$. Note that, when $\left(
\tau \right) _{h}\equiv \{t\}_{M}$, the so defined order-$\beta $
quasi-maximum likelihood estimator reduces to the corresponding
approximate quasi-maximum likelihood estimator mentioned in Section
\ref{PLK Method Section}. That is, to one of those considered in
Prakasa Rao (1983), Yoshida (1992), Florens-Zmirou (1989), Ozaki
(1985,1992), Shoji \& Ozaki (1997,1998), or Huang (2011).

Note that the goodness of the approximation $\mathbf{y}$ to
$\mathbf{x}$ is measured (in weak sense) by the left hand side of
(\ref{LMVF7}). Thus, the
inequality (\ref{LMVF7}) gives a bound for the errors of the approximation $\mathbf{y%
}$ to $\mathbf{x}$, for all $t\in \lbrack t_{k},t_{k+1}]$ and all
pair of consecutive observations $t_{k},t_{k+1}\in \{t\}_{M}$.
Moreover, this inequality states the convergence (in weak sense and
with rate $\beta $) of the approximation $\mathbf{y}$ to
$\mathbf{x}$ as the maximum stepsize $h$ of the time discretization
$(\tau )_{h}\supset \{t\}_{M}$ goes to zero. Clearly this includes,
as particular case, the convergence of the first two conditional
moments of $\mathbf{y}$ to those of $\mathbf{x}$. Since the
approximate estimator in Definition \ref{Definition order-B QML
estimator} is designed in terms of the first two conditional moments
of the approximation $\mathbf{y}$, the weak convergence of
$\mathbf{y}$ to $\mathbf{x}$ should imply the convergence of the
approximate QML estimator to the exact one and the similarity of
their asymptotic properties, as $h$ goes to zero. Next results deal
with these matters.

\subsection{Convergence}

For a finite sample $Z$ of $M$ observation of (\ref{SDE PLK}), the
following convergence results are useful.

\begin{theorem}
\label{PLK convergence theorem}Let $Z$ be a time series of $M$
observations of the SDE (\ref{SDE PLK}) with $\mathbf{\theta =\theta
}_{0}$ on the time
partition $\{t\}_{M}$. Let $\widehat{\mathbf{\theta }}_{M}$ and $\widehat{%
\mathbf{\theta }}_{M}(h)$ be, respectively, the quasi-maximum
likelihood and
an order-$\beta $ quasi-maximum likelihood estimator for the parameters of (%
\ref{SDE PLK}) given $Z$. Then%
\[
\left\vert \widehat{\mathbf{\theta }}_{M}(h)-\widehat{\mathbf{\theta }}%
_{M}\right\vert \rightarrow 0
\]%
as $h\rightarrow 0$. Moreover,%
\[
E(\left\vert \widehat{\mathbf{\theta
}}_{M}(h)-\widehat{\mathbf{\theta }}_{M}\right\vert )\rightarrow 0
\]%
as $h\rightarrow 0$, where the expectation is with respect to the
measure on
the underlying probability space generating the realizations of the SDE (\ref%
{SDE PLK}) with $\mathbf{\theta =\theta }_{0}$.
\end{theorem}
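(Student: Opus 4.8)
The plan is to reduce the statement to a standard extremum-estimator (argmin) convergence argument: first show that the approximate contrast $U_{M,h}$ converges to the exact contrast $U_{M}$ uniformly over the compact parameter set $\mathcal{D}_{\theta}$, and then deduce from this uniform convergence, together with identifiability of the minimizer of $U_{M}$, that the corresponding minimizers converge; the second assertion will follow from the first by a bounded-convergence argument exploiting compactness of $\mathcal{D}_{\theta}$. The first ingredient is the convergence of the first two conditional moments. The coordinate maps $g(\mathbf{x})=x_{i}$ and the products $g(\mathbf{x})=x_{i}x_{j}$ are polynomials, hence belong to $\mathcal{C}_{P}^{2(\beta +1)}(\mathbb{R}^{d},\mathbb{R})$, so I would apply the weak-convergence criterion (\ref{LMVF7}) to them on the interval $[t_{k-1},t_{k}]$ at the endpoint $t=t_{k}$, conditioning on $\mathcal{F}_{t_{k-1}}$. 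Evaluated on the observed path, so that (by the Markov property) conditioning on $\mathcal{F}_{t_{k-1}}$ reduces to conditioning on $\mathbf{z}_{k-1}$, this yields $\left\vert \mathbf{\mu}_{h,k}-\mathbf{\mu}_{k}\right\vert \leq C_{k}h^{\beta}$ and $\left\vert E(\mathbf{y}(t_{k})\mathbf{y}^{\intercal}(t_{k})|\mathbf{z}_{k-1})-E(\mathbf{x}(t_{k})\mathbf{x}^{\intercal}(t_{k})|\mathbf{z}_{k-1})\right\vert \leq C_{k}h^{\beta}$ for each $k$. Combining the second bound with the first and using the moment bound (\ref{LMVF6}) to control the cross terms $\mathbf{\mu}\mathbf{\mu}^{\intercal}$ gives $\left\vert \mathbf{\Sigma}_{h,k}-\mathbf{\Sigma}_{k}\right\vert \leq C_{k}h^{\beta}$. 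Since the constants $L_{k}$ in (\ref{LMVF7}) are valid for every $\mathbf{\theta}\in\mathcal{D}_{\theta}$, all these bounds are uniform in $\mathbf{\theta}$.

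Next I would transfer this to the contrast functions. Each summand of $U_{M,h}$ and $U_{M}$ has the form $\ln\det(\mathbf{\Sigma})+(\mathbf{z}_{k}-\mathbf{\mu})^{\intercal}\mathbf{\Sigma}^{-1}(\mathbf{z}_{k}-\mathbf{\mu})$, which is continuously differentiable in $(\mathbf{\mu},\mathbf{\Sigma})$ wherever $\mathbf{\Sigma}$ is positive definite. Because $\mathbf{\Sigma}_{k}(\mathbf{\theta})$ and $\mathbf{\Sigma}_{h,k}(\mathbf{\theta})$ are assumed positive definite and continuous on the compact set $\mathcal{D}_{\theta}$, their smallest eigenvalues are bounded below uniformly in $\mathbf{\theta}$, and for $h$ small also uniformly in $h$ (since $\mathbf{\Sigma}_{h,k}$ differs from $\mathbf{\Sigma}_{k}$ by $O(h^{\beta})$), so $\det\mathbf{\Sigma}$ and $\mathbf{\Sigma}^{-1}$ stay under control. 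A Lipschitz estimate for the map $(\mathbf{\mu},\mathbf{\Sigma})\mapsto\ln\det(\mathbf{\Sigma})+(\mathbf{z}_{k}-\mathbf{\mu})^{\intercal}\mathbf{\Sigma}^{-1}(\mathbf{z}_{k}-\mathbf{\mu})$ on this region, applied to the $O(h^{\beta})$ moment differences and summed over the finitely many $k$, then yields $\sup_{\mathbf{\theta}\in\mathcal{D}_{\theta}}\left\vert U_{M,h}(\mathbf{\theta},Z)-U_{M}(\mathbf{\theta},Z)\right\vert \leq Ch^{\beta}\to0$ as $h\to0$.

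From here the first conclusion follows by the usual argmin argument. Writing $\widehat{\mathbf{\theta}}_{M}(h)$ and $\widehat{\mathbf{\theta}}_{M}$ for the minimizers of $U_{M,h}$ and $U_{M}$, the chain $U_{M}(\widehat{\mathbf{\theta}}_{M}(h),Z)-U_{M}(\widehat{\mathbf{\theta}}_{M},Z)\leq 2\sup_{\mathbf{\theta}}\left\vert U_{M,h}-U_{M}\right\vert \leq 2Ch^{\beta}$ shows that $\widehat{\mathbf{\theta}}_{M}(h)$ is an approximate minimizer of $U_{M}$; since $U_{M}$ is continuous on the compact set $\mathcal{D}_{\theta}$ and attains its minimum at the (assumed unique) point $\widehat{\mathbf{\theta}}_{M}$, this minimum is well separated, and hence $\left\vert \widehat{\mathbf{\theta}}_{M}(h)-\widehat{\mathbf{\theta}}_{M}\right\vert \to0$. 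For the second assertion, I would note that this convergence holds for each realization of the data while both estimators remain inside the bounded set $\mathcal{D}_{\theta}$, so $\left\vert \widehat{\mathbf{\theta}}_{M}(h)-\widehat{\mathbf{\theta}}_{M}\right\vert$ is dominated by $\operatorname{diam}(\mathcal{D}_{\theta})<\infty$; the bounded convergence theorem then gives $E(\left\vert \widehat{\mathbf{\theta}}_{M}(h)-\widehat{\mathbf{\theta}}_{M}\right\vert )\to0$.

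I expect the main obstacle to be precisely the passage from uniform convergence of the contrast functions to convergence of their minimizers, which genuinely requires an identifiability (well-separated minimum) property of $U_{M}$: without uniqueness of $\widehat{\mathbf{\theta}}_{M}$ one can only conclude that $\widehat{\mathbf{\theta}}_{M}(h)$ approaches the set of minimizers, not a single point. A secondary technical point is securing the uniform lower bound on the eigenvalues of $\mathbf{\Sigma}_{h,k}(\mathbf{\theta})$, simultaneously in $\mathbf{\theta}$ and in small $h$, needed to keep $\ln\det\mathbf{\Sigma}$ and $\mathbf{\Sigma}^{-1}$ Lipschitz when forming the uniform estimate of the previous paragraph.
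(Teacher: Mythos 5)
Your proposal is correct and reaches both assertions, but it executes the two key steps by different means than the paper. The first ingredient is identical: both you and the paper apply the weak convergence criterion (\ref{LMVF7}) to the coordinate functions $g(\mathbf{x})=\mathbf{x}^{i}$ and products $g(\mathbf{x})=\mathbf{x}^{i}\mathbf{x}^{j}$ to obtain $O(h^{\beta })$ bounds on $\left\vert \mathbf{\mu }_{k}-\mathbf{\mu }_{h,k}\right\vert $ and $\left\vert \mathbf{\Sigma }_{k}-\mathbf{\Sigma }_{h,k}\right\vert $. From there you prove uniform convergence $\sup_{\mathbf{\theta }}\left\vert U_{M,h}-U_{M}\right\vert \leq Ch^{\beta }$ via a Lipschitz estimate for $(\mathbf{\mu },\mathbf{\Sigma })\mapsto \ln \det (\mathbf{\Sigma })+(\mathbf{z}_{k}-\mathbf{\mu })^{\intercal }\mathbf{\Sigma }^{-1}(\mathbf{z}_{k}-\mathbf{\mu })$ on a region of uniformly positive definite matrices, whereas the paper derives an exact algebraic decomposition $U_{M,h}(\mathbf{\theta },Z)=U_{M}(\mathbf{\theta },Z)+R_{M,h}(\mathbf{\theta })$ from the identities $\det (\mathbf{\Sigma }_{h,k})=\det (\mathbf{\Sigma }_{k})\det (\mathbf{I}-\mathbf{\Sigma }_{k}^{-1}\Delta \mathbf{\Sigma }_{h,k})$ and $\mathbf{\Sigma }_{h,k}^{-1}=\mathbf{\Sigma }_{k}^{-1}+\mathbf{\Sigma }_{k}^{-1}\Delta \mathbf{\Sigma }_{h,k}(\mathbf{I}-\mathbf{\Sigma }_{k}^{-1}\Delta \mathbf{\Sigma }_{h,k})^{-1}\mathbf{\Sigma }_{k}^{-1}$, and then shows the remainder $R_{M,h}(\mathbf{\theta })\rightarrow 0$. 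The paper's decomposition buys reusability: the same identities and remainder are recycled in the proof of Theorem \ref{PLK main theorem} to write $W_{M,h}=W_{M}+E(R_{M,h})$. Your route buys rigor at exactly the point where the paper is weakest: the paper passes from $R_{M,h}\rightarrow 0$ to convergence of the argmins in a single asserted step, while you correctly identify that this passage requires uniform convergence over the compact set $\mathcal{D}_{\theta }$ together with a well-separated (unique) minimum of $U_{M}(\cdot ,Z)$, and you supply the standard extremum-estimator argument. (Strictly speaking, uniqueness of the sample minimizer $\widehat{\mathbf{\theta }}_{M}$ is not among the paper's hypotheses either --- identifiability is assumed only later, for the population criterion $W_{M}$ in Lemma \ref{PLK Lemma} --- so the caveat you flag applies to the paper's own proof as much as to yours.) For the second assertion the two arguments also differ: the paper re-derives the moment bounds in expectation, using that the constants $L_{k-1}$ in (\ref{LMVF7}) do not depend on the realization, and then asserts the conclusion; your bounded-convergence argument, dominating $\left\vert \widehat{\mathbf{\theta }}_{M}(h)-\widehat{\mathbf{\theta }}_{M}\right\vert $ by $\mathrm{diam}(\mathcal{D}_{\theta })$ and invoking dominated convergence, is simpler and is in effect the argument the paper implicitly needs anyway.
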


\begin{proof}
Defining $\Delta \mathbf{\Sigma }_{h,k}=\mathbf{\Sigma }_{k}-\mathbf{\Sigma }%
_{h,k}$, it follows that
\begin{eqnarray}
\det (\mathbf{\Sigma }_{h,k}) &=&\det (\mathbf{\Sigma }_{k}-\Delta \mathbf{%
\Sigma }_{h,k})  \nonumber \\
&=&\det (\mathbf{\Sigma }_{k})\det (\mathbf{I}-\mathbf{\Sigma }%
_{k}^{-1}\Delta \mathbf{\Sigma }_{h,k})  \label{PLK Ind 1}
\end{eqnarray}%
and%
\begin{eqnarray}
\mathbf{\Sigma }_{h,k}^{-1} &=&(\mathbf{\Sigma }_{k}-\Delta
\mathbf{\Sigma }
_{h,k})^{-1}  \nonumber \\
&=&\mathbf{\Sigma }_{k}^{-1}+\mathbf{\Sigma }_{k}^{-1}\Delta
\mathbf{\Sigma }
_{h,k}(\mathbf{I}-\mathbf{\Sigma }_{k}^{-1}\Delta \mathbf{\Sigma }%
_{h,k})^{-1}\mathbf{\Sigma }_{k}^{-1}.  \label{PLK Ind 2}
\end{eqnarray}%
By using these two identities and the identity
\begin{eqnarray}
(\mathbf{z}_{k}-\mathbf{\mu }_{h,k})^{\intercal }(\mathbf{\Sigma }%
_{h,k})^{-1}(\mathbf{z}_{k}-\mathbf{\mu }_{h,k}) &=&(\mathbf{z}_{k}-\mathbf{%
\mu }_{k})^{\intercal }(\mathbf{\Sigma }_{h,k})^{-1}(\mathbf{z}_{k}-\mathbf{%
\mu }_{k})  \nonumber \\
&&+(\mathbf{z}_{k}-\mathbf{\mu }_{k})^{\intercal }(\mathbf{\Sigma }%
_{h,k})^{-1}(\mathbf{\mu }_{k}-\mathbf{\mu }_{h,k})  \nonumber \\
&&+(\mathbf{\mu }_{k}-\mathbf{\mu }_{h,k})^{\intercal }(\mathbf{\Sigma }%
_{h,k})^{-1}(\mathbf{z}_{k}-\mathbf{\mu }_{k})  \nonumber \\
&&+(\mathbf{\mu }_{k}-\mathbf{\mu }_{h,k})^{\intercal }(\mathbf{\Sigma }%
_{h,k})^{-1}(\mathbf{\mu }_{k}-\mathbf{\mu }_{h,k})  \label{PLK Ind
3}
\end{eqnarray}%
it is obtained that
\begin{equation}
U_{M,h}(\mathbf{\theta },Z)=U_{M}(\mathbf{\theta
},Z)+R_{M,h}(\mathbf{\theta }),  \label{PLK Ind 4}
\end{equation}%
where $U_{M}$ and $U_{M,h}$ are defined in (\ref{QML estimator}) and (\ref%
{order-B QML estimator}), respectively, and
\begin{eqnarray*}
R_{M,h}(\mathbf{\theta }) &=&\sum\limits_{k=1}^{M-1}\ln (\det (\mathbf{I}-%
\mathbf{\Sigma }_{k}^{-1}\Delta \mathbf{\Sigma }_{h,k}))+(\mathbf{z}_{k}-%
\mathbf{\mu }_{k})^{\intercal }\mathbf{M}_{h,k}\mathbf{(z}_{k}-\mathbf{\mu }%
_{k}) \\
&&+(\mathbf{z}_{k}-\mathbf{\mu }_{k})^{\intercal }(\mathbf{\Sigma }%
_{h,k})^{-1}(\mathbf{\mu }_{k}-\mathbf{\mu }_{h,k})+(\mathbf{\mu }_{k}-%
\mathbf{\mu }_{h,k})^{\intercal }(\mathbf{\Sigma }_{h,k})^{-1}(\mathbf{z}%
_{k}-\mathbf{\mu }_{k}) \\
&&+(\mathbf{\mu }_{k}-\mathbf{\mu }_{h,k})^{\intercal }(\mathbf{\Sigma }%
_{h,k})^{-1}(\mathbf{\mu }_{k}-\mathbf{\mu }_{h,k})
\end{eqnarray*}%
with $\mathbf{M}_{h,k}=\mathbf{\Sigma }_{k}^{-1}\Delta \mathbf{\Sigma }%
_{h,k}(\mathbf{I-\Sigma }_{k}^{-1}\Delta \mathbf{\Sigma }_{h,k})^{-1}\mathbf{%
\Sigma }_{k}^{-1}$.

For the functions $g(\mathbf{x}(t))=\mathbf{x}^{i}(t)$ and $g(\mathbf{x}(t))=%
\mathbf{x}^{i}(t)\mathbf{x}^{j}(t)$ belonging to the function space $%
\mathcal{C}_{P}^{2(\beta +1)}(\mathbb{R}^{d},\mathbb{R})$, for all $i,j=1..d$%
, condition (\ref{LMVF7}) directly implies that
\begin{equation}
\left\vert E(\mathbf{x}(t_{k})|\mathbf{z}_{k-1})-E(\mathbf{y}(t_{k})|\mathbf{%
z}_{k-1})\right\vert \leq \sqrt{d}L_{k-1}h^{\beta }  \label{PLK Ind
5}
\end{equation}%
and
\begin{equation}
\left\vert E(\mathbf{x}(t_{k})\mathbf{x}^{\intercal }(t_{k})|\mathbf{z}%
_{k-1})-E(\mathbf{y}(t_{k})\mathbf{y}^{\intercal }(t_{k})|\mathbf{z}%
_{k-1})\right\vert \leq dL_{k-1}h^{\beta }.  \label{PLK Ind 6}
\end{equation}%
From this and the finite bound for the conditional mean of $\mathbf{x}$ and $%
\mathbf{y}$, it is obtained that%
\[
\left\vert \mathbf{\mu }_{k}-\mathbf{\mu }_{h,k}\right\vert
\rightarrow
\mathbf{0}\text{ \ \ \ \ \ \ and \ \ \ \ \ \ \ }\left\vert \mathbf{\Sigma }%
_{k}-\mathbf{\Sigma }_{h,k}\right\vert \rightarrow \mathbf{0}
\]%
as $h\rightarrow 0$ for all $\mathbf{\theta }\in \mathcal{D}_{\theta }$ and $%
k=1,..,M-1$. This and the finite bound for the first two conditional
moments
of $\mathbf{x}$ and $\mathbf{y}$ imply that $R_{M,h}(\mathbf{\theta }%
)\rightarrow \mathbf{0}$ as well with $h$. From this and (\ref{PLK
Ind 4}),
\begin{equation}
\left\vert \widehat{\mathbf{\theta }}_{M}(h)-\widehat{\mathbf{\theta }}%
_{M}\right\vert =\left\vert \arg \{\underset{\mathbf{\theta }}{\mathbf{\min }%
}\{U_{M}(\mathbf{\theta },Z)+R_{M,h}(\mathbf{\theta })\}\}-\arg \{\underset{%
\mathbf{\theta }}{\mathbf{\min }}U_{M}(\mathbf{\theta
},Z)\}\right\vert \rightarrow 0  \label{PLK Ind 7}
\end{equation}%
as $h\rightarrow 0$, which implies the first assertion of the
theorem.

On the other hand, since the value of the constant $L_{k-1}$ in
(\ref{PLK Ind 5}) and (\ref{PLK Ind 6}) does not depend of a
specific realization of the SDE (\ref {SDE PLK}), from these
inequalities follows that
\[
E(\left\vert
E(\mathbf{x}(t_{k})|\mathbf{z}_{k-1})-E(\mathbf{y}(t_{k})|
\mathbf{z}_{k-1})\right\vert )\leq \sqrt{d}L_{k-1}h^{\beta }
\]%
and
\[
E(\left\vert E(\mathbf{x}(t_{k})\mathbf{x}^{\intercal
}(t_{k})|\mathbf{z} _{k-1})-E(\mathbf{y}(t_{k})\mathbf{y}^{\intercal
}(t_{k})|\mathbf{z} _{k-1})\right\vert )\leq dL_{k-1}h^{\beta },
\]
where the new expectation here is with respect to the measure on the
underlying probability space generating the realizations of the SDE
(\ref {SDE PLK}) with $\mathbf{\theta =\theta }_{0}$. From this and
(\ref{PLK Ind 7}) follows that $E(\left\vert \widehat{\mathbf{\theta
}}_{M}(h)-\widehat{ \mathbf{\theta }}_{M}\right\vert )\rightarrow 0$
as $h\rightarrow 0$, which concludes the proof.
\end{proof}

The the first assertion of this theorem states that, for each given
data $Z$, the order-$\beta $ QML estimator $\widehat{\mathbf{\theta
}}_{M}(h)$ converges to the exact one $\widehat{\mathbf{\theta
}}_{M}$ as $h$ goes to zero. Because $h$ controls the weak
convergence criteria (\ref{LMVF7}) is then clear that the
order-$\beta $ QML estimator (\ref{order-B QML estimator}) converges
to the exact one (\ref{QML estimator}) when the error (in weak
sense) of the order-$\beta $ approximation $\mathbf{y}$ to
$\mathbf{x}$ decreases. On the other hand, the second assertion
implies that the average of the errors $\left\vert
\widehat{\mathbf{\theta }}_{M}(h)-\widehat{\mathbf{ \theta
}}_{M}\right\vert $ corresponding to different realizations of (\ref
{SDE PLK}) decreases when $h$ does.

Next theorem deals with error between the averages of the estimators
$ \widehat{\mathbf{\theta }}_{M}(h)$ and $\widehat{\mathbf{\theta
}}_{M}$ computed for different realizations of the SDE.

\begin{theorem}
\label{PLK week convergence}Let $Z$ be a time series of $M$
observations of the SDE (\ref{SDE PLK}) with $\mathbf{\theta =\theta
}_{0}$
on the time partition $\{t\}_{M}$. Let $\widehat{\mathbf{\theta }}_{M}$ and $%
\widehat{\mathbf{\theta }}_{M}(h)$ be, respectively, the
quasi-maximum likelihood and an order-$\beta $ quasi-maximum
likelihood estimator for the
parameters of (\ref{SDE PLK}) given $Z$. Then,%
\[
\left\vert E(\widehat{\mathbf{\theta }}_{M}(h))-E(\widehat{\mathbf{\theta }}%
_{M})\right\vert \rightarrow 0
\]%
as $h\rightarrow 0$, where the expectation is with respect to the
measure on
the underlying probability space generating the realizations of the SDE (\ref%
{SDE PLK}) with $\mathbf{\theta =\theta }_{0}$.
\end{theorem}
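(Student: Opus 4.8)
The plan is to obtain this statement as a direct consequence of the second assertion of Theorem \ref{PLK convergence theorem}, which already controls the expected distance between the two estimators. First I would note that both $\widehat{\mathbf{\theta }}_{M}(h)$ and $\widehat{\mathbf{\theta }}_{M}$ take values in the compact set $\mathcal{D}_{\theta }$ and are therefore bounded, so their expectations are well defined and the linearity of the expectation yields
\[
E(\widehat{\mathbf{\theta }}_{M}(h))-E(\widehat{\mathbf{\theta }}_{M})=E\!\left( \widehat{\mathbf{\theta }}_{M}(h)-\widehat{\mathbf{\theta }}_{M}\right) .
\]

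Next I would apply the elementary bound $\left\vert E(\mathbf{X})\right\vert \leq E(\left\vert \mathbf{X}\right\vert )$, valid for any integrable random vector $\mathbf{X}$ (it holds componentwise, or follows from Jensen's inequality for the convex Euclidean norm), to $\mathbf{X}=\widehat{\mathbf{\theta }}_{M}(h)-\widehat{\mathbf{\theta }}_{M}$, which gives
\[
\left\vert E(\widehat{\mathbf{\theta }}_{M}(h))-E(\widehat{\mathbf{\theta }}_{M})\right\vert \leq E\!\left( \left\vert \widehat{\mathbf{\theta }}_{M}(h)-\widehat{\mathbf{\theta }}_{M}\right\vert \right) .
\]
The right-hand side is precisely the quantity proved to tend to zero in the second assertion of Theorem \ref{PLK convergence theorem}, so the left-hand side is squeezed to zero as $h\rightarrow 0$, establishing the claim.

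There is no genuinely hard step here: the result is a soft consequence of the $L^{1}$-type convergence already in hand, and amounts to the familiar fact that convergence of $E(\left\vert \widehat{\mathbf{\theta }}_{M}(h)-\widehat{\mathbf{\theta }}_{M}\right\vert )$ forces convergence of the corresponding means. The only point that merits a line of justification is integrability, and this is supplied at once by the compactness of $\mathcal{D}_{\theta }$, which bounds both estimators uniformly and thereby licenses both the use of linearity and the norm inequality above. Accordingly I would present the argument as the two-step chain displayed, rather than reworking any of the moment estimates from the previous proof.
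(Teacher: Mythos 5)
Your proposal is correct and follows essentially the same route as the paper's own proof: both pass from $\left\vert E(\widehat{\mathbf{\theta }}_{M}(h))-E(\widehat{\mathbf{\theta }}_{M})\right\vert$ to $E\left( \left\vert \widehat{\mathbf{\theta }}_{M}(h)-\widehat{\mathbf{\theta }}_{M}\right\vert \right)$ via linearity of expectation and the inequality $\left\vert E(\mathbf{X})\right\vert \leq E(\left\vert \mathbf{X}\right\vert )$, and then invoke the second assertion of Theorem \ref{PLK convergence theorem}. Your added remark on integrability via compactness of $\mathcal{D}_{\theta }$ is a harmless extra justification that the paper leaves implicit.
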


\begin{proof}
Trivially,
\begin{eqnarray*}
\left\vert E(\widehat{\mathbf{\theta }}_{M}(h))-E(\widehat{\mathbf{\theta }}%
_{M})\right\vert  &=&\left\vert E(\widehat{\mathbf{\theta }}_{M}(h)-\widehat{%
\mathbf{\theta }}_{M})\right\vert  \\
&\leq &E(\left\vert \widehat{\mathbf{\theta }}_{M}(h)-\widehat{\mathbf{%
\theta }}_{M}\right\vert ),
\end{eqnarray*}%
where the expectation here is taken with respect to the measure on
the
underlying probability space generating the realizations of the SDE (\ref%
{SDE PLK}) with $\mathbf{\theta =\theta }_{0}$. From this and the
second assertion of Theorem \ref{PLK convergence theorem}, the proof
is completed.
\end{proof}

Here, it is worth to remak that the conventional approximate QML
estimators mentioned in Section \ref{PLK Method Section} do not have
the desired convergence properties stated in the theorems above for
the order-$\beta $ QML estimator. Further note that, either in
Definition \ref{Definition order-B QML estimator} nor in Theorems
\ref{PLK convergence theorem} and \ref{PLK week convergence} some
restriction on the time partition $\{t\}_{M}$ for the data has been
assumed. Thus, there are not specific constraints about the time
distance between two consecutive observations, which allows the
application of the order-$\beta $ QML estimator in a variety of
practical problems with a reduced number of not close observations
in time, with sequential random measurements, or with multiple
missing data. Neither there are restrictions on the time
discretization $(\tau )_{h}$ $ \supset \{t\}_{M}$ on which the
order-$\beta $ QML estimator is defined. Thus, $(\tau )_{h}$ can be
set by the user by taking into account some specifications or
previous knowledge on the inference problem under consideration, or
automatically designed by an adaptive strategy as it will be shown
in the section concerning the numerical simulations.

\subsection{Asymptotic properties\label{PLK Theoretical Section}}

In this section, asymptotic properties of the approximate
quasi-maximum likelihood estimator $\widehat{\mathbf{\theta
}}_{M}(h)$ will be studied by using a general result obtained in
Ljung and Caines (1979) for prediction
error estimators. According to that, the relation between the estimator $%
\widehat{\mathbf{\theta }}_{M}(h)$ and the global minimum $\mathbf{\theta }%
_{M}^{\ast }$ of the function
\begin{equation}
W_{M}(\mathbf{\theta })=E(U_{M}(\mathbf{\theta },Z))\text{ with }\mathbf{%
\theta }\in \mathcal{D}_{\theta }  \label{PLKW}
\end{equation}%
should be considered, where $U_{M}$ is defined in (\ref{QML
estimator}) and the expectation is taken with respect to the measure
on the underlying probability space generating the realizations of
the SDE (\ref {SDE PLK}). Here, it is worth to remark that
$\mathbf{\theta }_{M}^{\ast }$ is not an estimator of
$\mathbf{\theta }$ since the function $W_{M}$ does not depend of a
given data $Z$. In fact, $\mathbf{\theta }_{M}^{\ast }$ indexes the
best predictor, in the sense that the average prediction error loss
function $W_{M}$ is minimized at this parameter (Ljung \& Caines,
1979).

In what follows, regularity conditions for the unique
identifiability of the SDE (\ref{SDE PLK}) are assumed, which are
typically satisfied by stationary
and ergodic diffusion processes (see, e.g., Bollerslev \& Wooldridge\textbf{%
\ }(1992) or Ljung \& Caines (1979)).

\begin{lemma}
\label{PLK Lemma}If $\mathbf{\Sigma }_{k}$ is positive definite for all $%
k=1,..,M-1$, then the function $W_{M}(\mathbf{\theta })$ defined in (\ref%
{PLKW}) has an unique minimum and
\begin{equation}
\arg \{\underset{\mathbf{\theta \in }\mathcal{D}_{\theta }}{\mathbf{\min }}%
W_{M}(\mathbf{\theta })\}=\mathbf{\theta }_{0}.  \label{M-estimator}
\end{equation}
\end{lemma}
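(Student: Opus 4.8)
The plan is to exploit that $\mathbf{\mu}_{k}$ and $\mathbf{\Sigma}_{k}$ appearing in $U_{M}$ depend on the candidate $\mathbf{\theta}$ through the conditional moments of $\mathbf{x}$, whereas the data $\mathbf{z}_{k}$ are generated at the true value $\mathbf{\theta}_{0}$. Writing $\mathbf{\mu}_{k}(\mathbf{\theta})$ and $\mathbf{\Sigma}_{k}(\mathbf{\theta})$ for these moments, and observing that $E(\mathbf{z}_{k}\mid\mathbf{z}_{k-1})=\mathbf{\mu}_{k}(\mathbf{\theta}_{0})$ and $\mathrm{Var}(\mathbf{z}_{k}\mid\mathbf{z}_{k-1})=\mathbf{\Sigma}_{k}(\mathbf{\theta}_{0})$ since $\mathbf{z}$ solves (\ref{SDE PLK}) with $\mathbf{\theta}_{0}$, the argument reduces to a termwise Gaussian Kullback--Leibler comparison.

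First I would condition each summand of $W_{M}(\mathbf{\theta})=E(U_{M}(\mathbf{\theta},Z))$ on $\mathbf{z}_{k-1}$. Since $\mathbf{\mu}_{k}(\mathbf{\theta})$ and $\mathbf{\Sigma}_{k}(\mathbf{\theta})$ are $\mathbf{z}_{k-1}$-measurable, splitting $\mathbf{z}_{k}-\mathbf{\mu}_{k}(\mathbf{\theta})=(\mathbf{z}_{k}-\mathbf{\mu}_{k}(\mathbf{\theta}_{0}))+\mathbf{\Delta}_{k}$ with $\mathbf{\Delta}_{k}=\mathbf{\mu}_{k}(\mathbf{\theta}_{0})-\mathbf{\mu}_{k}(\mathbf{\theta})$ and taking conditional expectations gives
\[
E\bigl[(\mathbf{z}_{k}-\mathbf{\mu}_{k}(\mathbf{\theta}))^{\intercal}\mathbf{\Sigma}_{k}(\mathbf{\theta})^{-1}(\mathbf{z}_{k}-\mathbf{\mu}_{k}(\mathbf{\theta}))\mid\mathbf{z}_{k-1}\bigr]=\mathrm{tr}\bigl(\mathbf{\Sigma}_{k}(\mathbf{\theta})^{-1}\mathbf{\Sigma}_{k}(\mathbf{\theta}_{0})\bigr)+\mathbf{\Delta}_{k}^{\intercal}\mathbf{\Sigma}_{k}(\mathbf{\theta})^{-1}\mathbf{\Delta}_{k},
\]
the cross terms vanishing because $\mathbf{z}_{k}-\mathbf{\mu}_{k}(\mathbf{\theta}_{0})$ has zero conditional mean. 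Next I would show that the resulting per-term function
\[
\ln\det\mathbf{\Sigma}_{k}(\mathbf{\theta})+\mathrm{tr}\bigl(\mathbf{\Sigma}_{k}(\mathbf{\theta})^{-1}\mathbf{\Sigma}_{k}(\mathbf{\theta}_{0})\bigr)+\mathbf{\Delta}_{k}^{\intercal}\mathbf{\Sigma}_{k}(\mathbf{\theta})^{-1}\mathbf{\Delta}_{k}
\]
is minimized exactly at $\mathbf{\mu}_{k}(\mathbf{\theta})=\mathbf{\mu}_{k}(\mathbf{\theta}_{0})$, $\mathbf{\Sigma}_{k}(\mathbf{\theta})=\mathbf{\Sigma}_{k}(\mathbf{\theta}_{0})$. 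The quadratic term is nonnegative and vanishes iff $\mathbf{\Delta}_{k}=\mathbf{0}$, since $\mathbf{\Sigma}_{k}(\mathbf{\theta})$ is positive definite. For the remaining part I would set $\mathbf{C}=\mathbf{\Sigma}_{k}(\mathbf{\theta})^{-1}\mathbf{\Sigma}_{k}(\mathbf{\theta}_{0})$, whose eigenvalues $\lambda_{1},\ldots,\lambda_{d}$ are positive, and write its excess over the value at $\mathbf{\theta}_{0}$ as $\sum_{i=1}^{d}(\lambda_{i}-\ln\lambda_{i}-1)$. Since $\lambda-\ln\lambda-1\ge 0$ with equality iff $\lambda=1$, this is nonnegative and vanishes iff $\mathbf{C}=\mathbf{I}$, i.e. $\mathbf{\Sigma}_{k}(\mathbf{\theta})=\mathbf{\Sigma}_{k}(\mathbf{\theta}_{0})$. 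Taking the outer expectation over $\mathbf{z}_{k-1}$ and summing over $k$ yields $W_{M}(\mathbf{\theta})\ge W_{M}(\mathbf{\theta}_{0})$, so $\mathbf{\theta}_{0}$ is a global minimizer.

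The main obstacle is uniqueness. Equality $W_{M}(\mathbf{\theta})=W_{M}(\mathbf{\theta}_{0})$ forces, almost surely and for every $k$, $\mathbf{\mu}_{k}(\mathbf{\theta})=\mathbf{\mu}_{k}(\mathbf{\theta}_{0})$ and $\mathbf{\Sigma}_{k}(\mathbf{\theta})=\mathbf{\Sigma}_{k}(\mathbf{\theta}_{0})$; that is, the candidate reproduces the true first two conditional moments along the trajectory. Deducing $\mathbf{\theta}=\mathbf{\theta}_{0}$ from this is precisely the content of the identifiability regularity conditions assumed just before the lemma, so at this point I would invoke those conditions (as in Ljung \& Caines 1979 or Bollerslev \& Wooldridge 1992) rather than establish identifiability from scratch. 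This settles both claims: $\mathbf{\theta}_{0}$ minimizes $W_{M}$, and it is the unique minimizer.
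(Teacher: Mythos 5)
Your proof is correct and follows essentially the same route as the paper: the paper handles the per-term minimization by citing Lemma A.2 of Bollerslev \& Wooldridge (1992) — whose content is exactly your Gaussian Kullback--Leibler computation, i.e.\ the vanishing cross term, the quadratic penalty $\mathbf{\Delta}_{k}^{\intercal}\mathbf{\Sigma}_{k}(\mathbf{\theta})^{-1}\mathbf{\Delta}_{k}$, and the eigenvalue inequality $\lambda-\ln \lambda-1\geq 0$ — and then, just as you do, appeals to the assumed unique identifiability of the SDE to pass from matching conditional moments to $\mathbf{\theta }=\mathbf{\theta }_{0}$. The only difference is that you prove the cited lemma from scratch, which makes the argument self-contained but does not change its structure.
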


\begin{proof}
Since $\mathbf{\Sigma }_{k}$ is positive definite for all
$k=1,..,M-1$,
Lemma A.2 in Bollerslev \& Wooldridge\textbf{\ }(1992) ensures that $\mathbf{%
\theta }_{0}$ is the unique minimum of the function%
\begin{equation*}
l_{k}(\mathbf{\theta })=E(\ln (\det (\mathbf{\Sigma }_{k}))+(\mathbf{z}_{k}-%
\mathbf{\mu }_{k})^{\intercal }(\mathbf{\Sigma }_{k})^{-1}(\mathbf{z}_{k}-%
\mathbf{\mu }_{k})|\mathbf{z}_{k-1})
\end{equation*}%
on $\mathcal{D}_{\theta }$ for all $k$. Consequently and under the
assumed unique identifiability of the SDE (\ref{SDE PLK}),
$\mathbf{\theta }_{0}$ is then the unique minimum of
\begin{equation*}
W_{M}(\mathbf{\theta })=(M-1)\ln (2\pi )+\sum\limits_{k=1}^{M-1}E(l_{k}(\mathbf{%
\theta }))
\end{equation*}%
on $\mathcal{D}_{\theta }.$
\end{proof}

Here, it is worth to remark that the result of this Lemma is
restricted to the QML estimator (\ref{QML estimator}) for SDEs.
However, for other types of stochastic processes, a similar result
can be found in the proof of Theorem 2.1 of Bollerslev \&
Wooldridge\textbf{\ }(1992) concerning the asymptotic properties of
the QML estimator under more general framework.

Denote by $U_{M,h}^{\prime }$ the derivative of $U_{M,h}$ with respect to $%
\mathbf{\theta }$, and by $W_{M}^{^{\prime \prime }}$ the second
derivative of $W_{M}$ with respect to $\mathbf{\theta }$.

\begin{theorem}
\label{PLK main theorem}Let $Z$ be a time series of $M$ observations
of the SDE (\ref{SDE PLK}) with $\mathbf{\theta =\theta }_{0}$ on
the time
partition $\{t\}_{M}$. \ Let $\widehat{\mathbf{\theta }}_{M}(h)$ be an order-%
$\beta $ quasi-maximum likelihood estimator for the parameters of
(\ref{SDE PLK}) given $Z$. Then
\begin{equation}
\widehat{\mathbf{\theta }}_{M}(h)-\mathbf{\theta }_{0}\rightarrow
\Delta \mathbf{\theta }_{M}(h)  \label{PLK1}
\end{equation}%
w.p.1 as $M\rightarrow \infty $, where $\Delta \mathbf{\theta }%
_{M}(h)\rightarrow 0$ as $h\rightarrow 0$. Moreover, if for some
$M_{0}\in
\mathbb{N}
$ there exists $\epsilon >0$ such that%
\begin{equation}
W_{M}^{^{\prime \prime }}(\mathbf{\theta })>\epsilon
\mathbf{I}\text{ \ \ \
and \ \ }\mathbf{H}_{M,h}(\mathbf{\theta )}=ME(U_{M,h}^{\prime }(\mathbf{%
\theta },Z\mathbf{)(}U_{M,h}^{\prime }(\mathbf{\theta },Z\mathbf{))}%
^{\intercal })>\epsilon \mathbf{I}  \label{PLK4}
\end{equation}%
for all $M>M_{0}$ and $\mathbf{\theta }\in \mathcal{D}_{\theta }$, then%
\begin{equation}
\sqrt{M}\mathbf{P}_{M,h}^{-1/2}(\widehat{\mathbf{\theta }}_{M}(h)-\mathbf{%
\theta }_{0})\sim \mathcal{N}(\Delta \mathbf{\theta
}_{M}(h),\mathbf{I}) \label{PLK2}
\end{equation}%
as $M\rightarrow \infty $, where $\mathbf{P}_{M,h}=(W_{M}^{^{\prime
\prime
}}(\mathbf{\theta }_{0}+\Delta \mathbf{\theta }_{M}(h)))^{-1}\mathbf{H}%
_{M,h}(\mathbf{\theta }_{0}+\Delta \mathbf{\theta
}_{M}(h))(W_{M}^{^{\prime \prime }}(\mathbf{\theta }_{0}+\Delta
\mathbf{\theta }_{M}(h)))^{-1}+\Delta
\mathbf{P}_{M,h}$ with $\Delta \mathbf{P}_{M,h}\rightarrow \mathbf{0}$ as $%
h\rightarrow 0$.
\end{theorem}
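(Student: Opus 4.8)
The key is to reduce everything to the already-cited framework of Ljung \& Caines (1979) for prediction error estimators, and then transfer the conclusions from the exact QML estimator $\widehat{\mathbf{\theta}}_M$ to the approximate one $\widehat{\mathbf{\theta}}_M(h)$ via the convergence established in Theorem~\ref{PLK convergence theorem}. First I would invoke the Ljung--Caines result directly for the contrast function $U_{M,h}$: under the stated regularity and identifiability conditions, the prediction error estimator $\widehat{\mathbf{\theta}}_M(h)$ converges w.p.1, as $M\to\infty$, to the global minimizer $\mathbf{\theta}_{M,h}^{*}$ of the averaged criterion $W_{M,h}(\mathbf{\theta})=E(U_{M,h}(\mathbf{\theta},Z))$. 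Setting $\Delta\mathbf{\theta}_M(h)=\mathbf{\theta}_{M,h}^{*}-\mathbf{\theta}_0$ yields the form~(\ref{PLK1}), so the remaining task for the first assertion is to show $\Delta\mathbf{\theta}_M(h)\to 0$ as $h\to 0$.

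**Controlling the bias $\Delta\mathbf{\theta}_M(h)$.**
By Lemma~\ref{PLK Lemma}, the exact averaged criterion $W_M$ has its unique minimum at $\mathbf{\theta}_0$. The decomposition~(\ref{PLK Ind 4}), $U_{M,h}=U_M+R_{M,h}$, gives after taking expectations $W_{M,h}(\mathbf{\theta})=W_M(\mathbf{\theta})+E(R_{M,h}(\mathbf{\theta}))$, and the estimates~(\ref{PLK Ind 5})--(\ref{PLK Ind 6}) together with the finite-moment bounds show that $E(R_{M,h}(\mathbf{\theta}))$ and its derivatives tend to $0$ uniformly in $\mathbf{\theta}\in\mathcal{D}_\theta$ as $h\to 0$. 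Thus $W_{M,h}\to W_M$ in $\mathcal{C}^1$; since $W_M$ has a unique minimizer at $\mathbf{\theta}_0$ with nondegenerate Hessian (the condition $W_M''(\mathbf{\theta})>\epsilon\mathbf{I}$ in~(\ref{PLK4}) guarantees this for $M>M_0$), a standard stability-of-minimizers argument forces $\mathbf{\theta}_{M,h}^{*}\to\mathbf{\theta}_0$, i.e. $\Delta\mathbf{\theta}_M(h)\to 0$, as $h\to 0$. This completes~(\ref{PLK1}).

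**The asymptotic normality.**
For the second assertion I would again appeal to Ljung \& Caines (1979), whose central limit theorem for prediction error estimators gives, under the positivity conditions~(\ref{PLK4}) on the Hessian $W_M''$ and the information-type matrix $\mathbf{H}_{M,h}$, the asymptotic normality
\[
\sqrt{M}\,\mathbf{Q}_{M,h}^{-1/2}\bigl(\widehat{\mathbf{\theta}}_M(h)-\mathbf{\theta}_{M,h}^{*}\bigr)\sim\mathcal{N}(\mathbf{0},\mathbf{I})\quad\text{as }M\to\infty,
\]
where $\mathbf{Q}_{M,h}=(W_M''(\mathbf{\theta}_{M,h}^{*}))^{-1}\mathbf{H}_{M,h}(\mathbf{\theta}_{M,h}^{*})(W_M''(\mathbf{\theta}_{M,h}^{*}))^{-1}$ is the sandwich covariance evaluated at $\mathbf{\theta}_{M,h}^{*}=\mathbf{\theta}_0+\Delta\mathbf{\theta}_M(h)$. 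Rewriting $\widehat{\mathbf{\theta}}_M(h)-\mathbf{\theta}_0=(\widehat{\mathbf{\theta}}_M(h)-\mathbf{\theta}_{M,h}^{*})+\Delta\mathbf{\theta}_M(h)$ shifts the mean to $\Delta\mathbf{\theta}_M(h)$ and produces the statement~(\ref{PLK2}); the matrix $\mathbf{P}_{M,h}$ is then $\mathbf{Q}_{M,h}$ plus a remainder $\Delta\mathbf{P}_{M,h}$ absorbing the discrepancy between the sandwich formula as written in the theorem (with the same $\mathbf{H}_{M,h}$ and $W_M''$) and the exact asymptotic covariance, and continuity of $W_M''$ and $\mathbf{H}_{M,h}$ together with $\Delta\mathbf{\theta}_M(h)\to 0$ gives $\Delta\mathbf{P}_{M,h}\to\mathbf{0}$ as $h\to 0$.

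**The main obstacle.**
The delicate point is the uniformity and the order of limits. Ljung--Caines asymptotics hold as $M\to\infty$ for each fixed $h$, whereas the bias and covariance corrections are controlled as $h\to 0$; I must verify that the constants $L_{k-1}$ in~(\ref{PLK Ind 5})--(\ref{PLK Ind 6}), and hence the bound on $E(R_{M,h})$, are uniform enough in $k$ that the perturbation of the averaged criterion $W_{M,h}-W_M$ remains small in $\mathcal{C}^2(\mathcal{D}_\theta)$ for the relevant range of $M$, so that the stability-of-minimizers argument and the continuity of the sandwich matrix are legitimately applicable. Establishing that $\mathbf{\theta}_{M,h}^{*}$ is well-defined and isolated uniformly (guaranteed by the uniform lower bound $\epsilon\mathbf{I}$ on $W_M''$ for $M>M_0$) is the crux that lets the two limiting regimes be combined cleanly.
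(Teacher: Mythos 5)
Your proposal is correct and follows essentially the same route as the paper's proof: fix $h$, apply Theorem 1 of Ljung \& Caines (1979) to the criterion $U_{M,h}$ to obtain convergence to, and asymptotic normality around, the minimizer $\mathbf{\theta}_{M,h}^{*}$ of $W_{M,h}=E(U_{M,h})$, then use the decomposition $W_{M,h}=W_{M}+E(R_{M,h})$, the moment estimates, and Lemma \ref{PLK Lemma} to conclude $\Delta \mathbf{\theta }_{M}(h)=\mathbf{\theta}_{M,h}^{*}-\mathbf{\theta }_{0}\rightarrow 0$ and $\Delta \mathbf{P}_{M,h}\rightarrow \mathbf{0}$ as $h\rightarrow 0$, exactly as the paper does with its $\mathbf{\alpha }_{M}(h)$ and the explicit perturbation identity $(W_{M,h}^{\prime \prime })^{-1}=(W_{M}^{\prime \prime })^{-1}+\mathbf{K}_{M,h}$. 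Two minor slips worth noting: the Ljung--Caines sandwich covariance is built from $W_{M,h}^{\prime \prime }$, not $W_{M}^{\prime \prime }$ as in your displayed $\mathbf{Q}_{M,h}$ (your subsequent prose about the remainder absorbing the discrepancy effectively corrects this), and the argmin-stability step for the first assertion should rest only on uniqueness of the minimizer (Lemma \ref{PLK Lemma}) plus uniform convergence of $W_{M,h}$ to $W_{M}$, since the Hessian bound (\ref{PLK4}) is not assumed for that part of the theorem.
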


\begin{proof}
Let $W_{M,h}(\mathbf{\theta })=E(U_{M,h}(\mathbf{\theta },Z))$ and $\mathbf{%
\alpha }_{M}(h)=\arg \{\underset{\mathbf{\theta \in }\mathcal{D}_{\theta }}{%
\mathbf{\min }}W_{M,h}(\mathbf{\theta })\}$, where $U_{M,h}$ is defined in (%
\ref{order-B QML estimator}).

For a $h$ fixed, Theorem 1 in Ljung \& Caines (1979) implies that%
\begin{equation}
\widehat{\mathbf{\theta }}_{M}(h)-\mathbf{\alpha }_{M}(h)\rightarrow
0 \label{PLK7}
\end{equation}%
w.p.1 as $M\rightarrow \infty $; and
\begin{equation}
\sqrt{M}\mathbf{P}_{M,h}^{-1/2}(\mathbf{\alpha }_{M}(h))(\widehat{\mathbf{%
\theta }}_{M}(h)-\mathbf{\alpha }_{M}(h))\sim
\mathcal{N}(0,\mathbf{I}) \label{PLK8}
\end{equation}%
as $M\rightarrow \infty $, where
\begin{equation*}
\mathbf{P}_{M,h}(\mathbf{\theta })=(W_{M,h}^{\prime \prime }(\mathbf{\theta }%
))^{-1}\text{ }\mathbf{H}_{M,h}(\mathbf{\theta })\text{
}(W_{M,h}^{\prime \prime }(\mathbf{\theta }))^{-1}
\end{equation*}%
with $\mathbf{H}_{M,h}(\mathbf{\theta })=ME(U_{M,h}^{\prime
}(\mathbf{\theta },Z\mathbf{)(}U_{M,h}^{\prime }(\mathbf{\theta
},Z\mathbf{))}^{\intercal })$.

By using the identities (\ref{PLK Ind 1})-(\ref{PLK Ind 3}), the
function
\begin{equation*}
W_{M,h}(\mathbf{\theta })=(M-1)\ln (2\pi )+\sum\limits_{k=1}^{M-1}E(\ln (\det (%
\mathbf{\Sigma }_{h,k}))+(\mathbf{z}_{k}-\mathbf{\mu }_{h,k})^{\intercal }(%
\mathbf{\Sigma }_{h,k})^{-1}(\mathbf{z}_{k}-\mathbf{\mu }_{h,k}))
\end{equation*}%
can be written as
\begin{equation}
W_{M,h}(\mathbf{\theta })=W_{M}(\mathbf{\theta })+E(R_{M,h}(\mathbf{\theta }%
)),  \label{PLK3}
\end{equation}%
where $W_{M}$ is defined in (\ref{PLKW}) and%
\begin{eqnarray*}
R_{M,h}(\mathbf{\theta }) &=&\sum\limits_{k=1}^{M-1}E(\ln (\det (\mathbf{I}-%
\mathbf{\Sigma }_{k}^{-1}\Delta \mathbf{\Sigma }_{h,k}))|\mathcal{F}%
_{t_{k-1}})+E((\mathbf{z}_{k}-\mathbf{\mu }_{k})^{\intercal }\mathbf{M}_{h,k}%
\mathbf{(z}_{k}-\mathbf{\mu }_{k})|\mathcal{F}_{t_{k-1}}) \\
&&+E((\mathbf{z}_{k}-\mathbf{\mu }_{k})^{\intercal }(\mathbf{\Sigma }%
_{h,k})^{-1}(\mathbf{\mu }_{k}-\mathbf{\mu }_{h,k})|\mathcal{F}%
_{t_{k-1}})+E((\mathbf{\mu }_{k}-\mathbf{\mu }_{h,k})^{\intercal }(\mathbf{%
\Sigma }_{h,k})^{-1}(\mathbf{z}_{k}-\mathbf{\mu
}_{k})|\mathcal{F}_{t_{k-1}})
\\
&&+E((\mathbf{\mu }_{k}-\mathbf{\mu }_{h,k})^{\intercal }(\mathbf{\Sigma }%
_{h,k})^{-1}(\mathbf{\mu }_{k}-\mathbf{\mu
}_{h,k})|\mathcal{F}_{t_{k-1}})
\end{eqnarray*}%
with $\mathbf{M}_{h,k}=\mathbf{\Sigma }_{k}^{-1}\Delta \mathbf{\Sigma }%
_{h,k}(\mathbf{I-\Sigma }_{k}^{-1}\Delta \mathbf{\Sigma }_{h,k})^{-1}\mathbf{%
\Sigma }_{k}^{-1}$ and $\Delta $\textbf{$\Sigma $}$_{h,k}=\mathbf{\Sigma }%
_{k}-\mathbf{\Sigma }_{h,k}$.

Denote by $W_{M,h}^{\prime \prime }$ and $R_{M,h}^{\prime \prime }$
the second derivative of $W_{M,h}$ and $R_{M,h}$ with respect to
$\mathbf{\theta }$.

Taking into account that
\begin{eqnarray*}
(W_{M,h}^{\prime \prime }(\mathbf{\theta }))^{-1} &=&(W_{M}^{\prime \prime }(%
\mathbf{\theta })+E(R_{M,h}^{\prime \prime }(\mathbf{\theta })))^{-1} \\
&=&(W_{M}^{\prime \prime }(\mathbf{\theta }))^{-1}+\mathbf{K}_{M,h}(\mathbf{%
\theta })
\end{eqnarray*}%
with
\begin{equation*}
\mathbf{K}_{M,h}(\mathbf{\theta })=-(W_{M}^{\prime \prime }(\mathbf{\theta }%
))^{-1}E(R_{M,h}^{\prime \prime }(\mathbf{\theta }))(\mathbf{I}%
+(W_{M}^{\prime \prime }(\mathbf{\theta }))^{-1}E(R_{M,h}^{\prime \prime }(%
\mathbf{\theta })))^{-1}(W_{M}^{\prime \prime }(\mathbf{\theta
}))^{-1},
\end{equation*}%
it is obtained that%
\begin{equation}
\mathbf{P}_{M,h}(\mathbf{\theta })=(W_{M}^{^{\prime \prime
}}(\mathbf{\theta
}))^{-1}\mathbf{H}_{M,h}(\mathbf{\theta })(W_{M}^{^{\prime \prime }}(\mathbf{%
\theta }))^{-1}+\Delta \mathbf{P}_{M,h}(\mathbf{\theta }),
\label{PLK12}
\end{equation}%
where%
\begin{equation*}
\Delta \mathbf{P}_{M,h}(\mathbf{\theta })=\mathbf{K}_{M,h}(\mathbf{\theta })%
\mathbf{H}_{M,h}(\mathbf{\theta })(W_{M}^{^{\prime \prime }}(\mathbf{\theta }%
))^{-1}+(W_{M}^{^{\prime \prime }}(\mathbf{\theta }))^{-1}\mathbf{H}_{M,h}(%
\mathbf{\theta })\mathbf{K}_{M,h}(\mathbf{\theta })+\mathbf{K}_{M,h}(\mathbf{%
\theta })\mathbf{H}_{M,h}(\mathbf{\theta })\mathbf{K}_{M,h}(\mathbf{\theta }%
).
\end{equation*}

For the functions $g(\mathbf{x}(t))=\mathbf{x}^{i}(t)$ and $g(%
\mathbf{x}(t))=\mathbf{x}^{i}(t)\mathbf{x}^{j}(t)$ belonging to the
function space
$\mathcal{C}_{P}^{2(\beta+1)}(\mathbb{R}^{d},\mathbb{R})$, for all
$i,j=1..d$, condition (\ref{LMVF7}) directly implies that
\begin{equation*}
\left\vert E(\mathbf{x}(t_{k})|\mathbf{z}_{k-1})-E(\mathbf{y}(t_{k})|\mathbf{%
z}_{k-1})\right\vert \leq \sqrt{d} L_{k-1}h^{\beta }
\end{equation*}%
and
\begin{equation*}
\left\vert E(\mathbf{x}(t_{k})\mathbf{x}^{\intercal }(t_{k})|\mathbf{z}%
_{k-1})-E(\mathbf{y}(t_{k})\mathbf{y}^{\intercal }(t_{k})|\mathbf{z}%
_{k-1})\right\vert \leq d L_{k-1}h^{\beta }.
\end{equation*}%
From this and the finite bound for the conditional mean of $\mathbf{x}$ and $%
\mathbf{y}$, it is obtained that%
\begin{equation*}
\left\vert \mathbf{\mu }_{k}-\mathbf{\mu }_{h,k}\right\vert
\rightarrow
\mathbf{0}\text{ \ \ \ \ \ \ and \ \ \ \ \ }\left\vert \text{\ }\mathbf{%
\Sigma }_{k}-\mathbf{\Sigma }_{h,k}\right\vert \rightarrow
\mathbf{0}
\end{equation*}%
as $h\rightarrow 0$ for all $\mathbf{\theta }\in \mathcal{D}_{\theta }$ and $%
k=1,..,M-1$. This and the finite bound for the first two conditional
moments
of $\mathbf{x}$ and $\mathbf{y}$ imply that $\left\vert R_{M,h}(\mathbf{%
\theta },Z)\right\vert \rightarrow 0$ and $\left\vert
R_{M,h}^{\prime \prime }(\mathbf{\theta },Z)\right\vert \rightarrow
0$ as well with $h$. From this and (\ref{PLK3}), it is obtained that
\begin{equation}
W_{M,h}(\mathbf{\theta })\rightarrow W_{M}(\mathbf{\theta })\text{ \
\ \ \ and \ \ \ \ }W_{M,h}^{\prime \prime }(\mathbf{\theta
})\rightarrow
W_{M}^{\prime \prime }(\mathbf{\theta })\text{ \ \ \ as \ \ \ \ \ }%
h\rightarrow 0.  \label{PLK9}
\end{equation}%
In addition, left (\ref{PLK9}) and Lemma \ref{PLK Lemma} imply that
\begin{equation}
\Delta \mathbf{\theta }_{M}(h)=\mathbf{\alpha }_{M}(h)-\mathbf{\theta }%
_{0}=\arg \{\underset{\mathbf{\theta \in }\mathcal{D}_{\theta }}{\mathbf{%
\min }}W_{M,h}(\mathbf{\theta })\}-\arg \{\underset{\mathbf{\theta \in }%
\mathcal{D}_{\theta }}{\mathbf{\min }}W_{M}(\mathbf{\theta })\}\rightarrow 0%
\text{ \ \ \ as \ \ \ \ \ }h\rightarrow 0,  \label{PLK11}
\end{equation}%
whereas from right (\ref{PLK9}) follows that%
\begin{equation}
\Delta \mathbf{P}_{M,h}(\mathbf{\theta })\rightarrow 0\text{ \ \ \
as \ \ \ \ \ }h\rightarrow 0.  \label{PLK10}
\end{equation}

Finally, (\ref{PLK11})-(\ref{PLK10}) together (\ref{PLK7}),
(\ref{PLK8}) and (\ref{PLK12}) imply that (\ref{PLK1}) and
(\ref{PLK2}) hold, which completes the proof.
\end{proof}

Theorem \ref{PLK main theorem} states that, for an increasing number
of observations, the order-$\beta $ QML estimator $\widehat{\mathbf{\theta }}%
_{M}(h)$ is asymptotically normal distributed and its bias decreases
when $h$ goes to zeros. This is a predictable result due to the
known asymptotic properties of the exact QML estimator
$\widehat{\mathbf{\theta }}_{M}$ stated in Bollerslev and
Wooldridge\textbf{\ }(1992) and the convergence of
the approximate estimator $\widehat{\mathbf{\theta }}_{M}(h)$ to $\widehat{%
\mathbf{\theta }}_{M}$ given by Theorem \ref{PLK convergence theorem} when $%
h\rightarrow 0$. Further note that, when $h=0$, the Theorem \ref{PLK
main theorem} reduces to Theorem 1 in Ljung \& Caines (1979) for the
exact QML estimator $\widehat{\mathbf{\theta }}_{M}$. This is other
expected result since the order-$\beta $ QML estimator
$\widehat{\mathbf{\theta }}_{M}(h)$ reduces to the exact one
$\widehat{\mathbf{\theta }}_{M}$ when $h=0$. Further note that,
neither in Theorem \ref{PLK main theorem} there are restrictions on
the time partition $\{t\}_{M}$ for
the data or on the time discretization $(\tau )_{h}$ $%
\supset \{t\}_{M}$ on which the approximate estimator is defined.
Therefore, the comments about them at the end of the previous
subsection are valid here as well.

\section{Order-$\protect\beta $ QML estimator based on Local Linear approximations}

Since, in principle, any type of approximation converging to the
solution of (\ref{SDE PLK}) in a weak sense can be used to construct
an order-$\beta $ QML estimator, some additional criterions could be
considered for the selection of one of them. For instance, high
order of convergence, efficient algorithm for the computation of the
moments, and so on. In this paper, we elected the order-$\beta $
Local Linear approximations (see, e.g., Jimenez \& Biscay, 2002, and
Jimenez \& Ozaki, 2003) for the following reasons: 1) their first
two conditional moments have simple explicit formulas that can be
computed by means of efficient algorithm (including high dimensional
equations) as in Jimenez \& Ozaki (2002,2003) and Jimenez (2012a);
2) their first two conditional moments are exact for linear
equations in all the possible variants (with additive and/or
multiplicative noise, autonomous or not), see Jimenez \& Ozaki
(2002); 3) they have an adequate order $\beta =1,2 $ of weak
convergence (Carbonell et al., 2006 and Jimenez, 2012b); and 4) the
better performance of the conventional QML estimators based on Local
Linearization schemes due to their simplicity, computational
efficiency and negligible bias (see, e.g., Shoji \& Ozaki 1997,
Durham \& Gallant 2002, Singer 2002, Hurn et al. 2007).

It is known that the first two conditional moments of the Local
Linear approximations satisfy a set ordinary differential equations.
Explicit formulas for the solution of these equations can be found
in various papers as it was mentioned before. In what follows, the
simplified expressions derived in Jimenez (2012a) are presented.

Denote by $\mathbf{y}_{\tau _{n}/t_{k}}=E(\mathbf{y}(\tau _{n})|\mathbf{z}%
_{k})$ and $\mathbf{P}_{\tau _{n}/t_{k}}=E(\mathbf{y(}\tau _{n})\mathbf{y}%
^{\intercal }(\tau _{n})|\mathbf{z}_{k})$ the first two conditional
moment of the order-$\beta $ Local Linear approximation $\mathbf{y}$
at $\tau _{n}$ given the observation $\mathbf{z}_{k}$, for all $\tau
_{n}\in \{\left( \tau
\right) _{h}$ $\cap $ $[t_{k},t_{k+1}]\}$ and $k=0,..,M-2$. Clearly, $%
\mathbf{y}_{t_{k+1}/t_{k}}$ and $\mathbf{V}_{t_{k+1}/t_{k}}=\mathbf{P}%
_{t_{k+1}/t_{k}}-\mathbf{y}_{t_{k+1}/t_{k}}\mathbf{y}_{t_{k+1}/t_{k}}^{%
\intercal }$ provide approximations to the exact conditional mean $\mathbf{%
\mu }_{k+1}$ and variance $\mathbf{\Sigma }_{k+1}$, respectively, for all $%
t_{k},t_{k+1}\in \{t\}_{M}$. Moreover, $\mathbf{y}_{t_{k}/t_{k}}=\mathbf{z}%
_{k}$ and
$\mathbf{P}_{t_{k}/t_{k}}=\mathbf{z}_{k}\mathbf{z}_{k}^{\intercal }
$ for all $t_{k}\in \{t\}_{M}$. Let $n_{t}=\max \{n=0,1,\ldots :\tau
_{n}\leq t$ and $\tau _{n}\in \left( \tau \right) _{h}\}$ for all $t\in %
\left[ t_{0},t_{M-1}\right] $.

According \ to Jimenez (2012b), the approximate moments $\mathbf{y}%
_{t_{k+1}/t_{k}}$ and $\mathbf{P}_{t_{k+1}/t_{k}}$ are obtained by
evaluating the recursive formulas%
\begin{equation}
\mathbf{y}_{t/t_{k}}=\mathbf{y}_{\tau _{n_{t}}/t_{k}}+\mathbf{L}_{2}e^{%
\mathbf{M}(\tau _{n_{t}})(t-\tau _{n_{t}})}\mathbf{u}_{\tau
_{n_{t}},t_{k}} \label{ALLF8}
\end{equation}%
and
\begin{equation}
vec(\mathbf{P}_{t/t_{k}})=\mathbf{L}_{1}e^{\mathbf{M}(\tau
_{n_{t}})(t-\tau _{n_{t}})}\mathbf{u}_{\tau _{n_{t}},t_{k}}
\label{ALLF9}
\end{equation}%
at $t=t_{k+1}$, where the vector $\mathbf{u}_{\tau ,t_{k}}$ and the
matrices $\mathbf{M}(\tau )$, $\mathbf{L}_{1}$, $\mathbf{L}_{2}$\
are defined as
\begin{equation*}
\mathbf{M}(\tau )=\left[
\begin{array}{cccccc}
\mathcal{A}(\tau ) & \mathcal{B}_{5}(\tau ) & \mathcal{B}_{4}(\tau )
&
\mathcal{B}_{3}(\tau ) & \mathcal{B}_{2}(\tau ) & \mathcal{B}_{1}(\tau ) \\
\mathbf{0} & \mathbf{C}(\tau ) & \mathbf{I}_{d+2} & \mathbf{0} &
\mathbf{0}
& \mathbf{0} \\
\mathbf{0} & \mathbf{0} & \mathbf{C}(\tau ) & \mathbf{0} &
\mathbf{0} &
\mathbf{0} \\
\mathbf{0} & \mathbf{0} & \mathbf{0} & 0 & 2 & 0 \\
\mathbf{0} & \mathbf{0} & \mathbf{0} & 0 & 0 & 1 \\
\mathbf{0} & \mathbf{0} & \mathbf{0} & 0 & 0 & 0%
\end{array}%
\right] \text{, \ \ }\mathbf{u}_{\tau ,t_{k}}=\left[
\begin{array}{c}
vec(\mathbf{P}_{\tau /t_{k}}) \\
\mathbf{0} \\
\mathbf{r} \\
0 \\
0 \\
1%
\end{array}%
\right] \in
\mathbb{R}
^{(d^{2}+2d+7)}
\end{equation*}%
and%
\begin{equation*}
\mathbf{L}_{1}=\left[
\begin{array}{cc}
\mathbf{I}_{d^{2}} & \mathbf{0}_{d^{2}\times (2d+7)}%
\end{array}%
\right] \text{, \ \ \ \ \ \ \ \ \ }\mathbf{L}_{2}=\left[
\begin{array}{ccc}
\mathbf{0}_{d\times (d^{2}+d+2)} & \mathbf{I}_{d} & \mathbf{0}_{d\times 5}%
\end{array}%
\right]
\end{equation*}%
in terms of the matrices and vectors
\begin{equation*}
\mathcal{A}(\tau )=\mathbf{A}(\tau )\mathbf{\oplus A}(\tau
)+\sum\limits_{i=1}^{m}\mathbf{B}_{i}(\tau )\mathbf{\otimes B}%
_{i}^{\intercal }(\tau ),
\end{equation*}%
\begin{equation*}
\mathbf{C(}\tau )=\left[
\begin{array}{ccc}
\mathbf{A}(\tau ) & \mathbf{a}_{1}(\tau ) & \mathbf{A}(\tau )\mathbf{y}%
_{\tau /t_{k}}+\mathbf{a}_{0}(\tau ) \\
0 & 0 & 1 \\
0 & 0 & 0%
\end{array}%
\right] \in \mathbb{R}^{(d+2)\times (d+2)},
\end{equation*}%
\begin{equation*}
\mathbf{r}^{\intercal }=\left[
\begin{array}{ll}
\mathbf{0}_{1\times (d+1)} & 1%
\end{array}%
\right]
\end{equation*}%
$\mathcal{B}_{1}(\tau )=vec(\mathbf{\beta }_{1}(\tau ))+\beta _{4}(\tau )%
\mathbf{y}_{\tau /t_{k}}$, $\mathcal{B}_{2}(\tau )=vec(\mathbf{\beta }%
_{2}(\tau ))+\mathbf{\beta }_{5}(\tau )\mathbf{y}_{\tau /t_{k}}$, $\mathcal{B%
}_{3}(\tau )=vec(\mathbf{\beta }_{3}(\tau ))$, $\mathcal{B}_{4}(\tau )=%
\mathbf{\beta }_{4}(\tau )\mathbf{L}$ and $\mathcal{B}_{5}(\tau )=\mathbf{%
\beta }_{5}(\tau )\mathbf{L}$ with
\begin{align*}
\mathbf{\beta }_{1}(\tau )& =\sum\limits_{i=1}^{m}\mathbf{b}_{i,0}(\tau )%
\mathbf{b}_{i,0}^{\intercal }(\tau ) \\
\mathbf{\beta }_{2}(\tau )& =\sum\limits_{i=1}^{m}\mathbf{b}_{i,0}(\tau )%
\mathbf{b}_{i,1}^{\intercal }(\tau )+\mathbf{b}_{i,1}(\tau )\mathbf{b}%
_{i,0}^{\intercal }(\tau ) \\
\mathbf{\beta }_{3}(\tau )& =\sum\limits_{i=1}^{m}\mathbf{b}_{i,1}(\tau )%
\mathbf{b}_{i,1}^{\intercal }(\tau ) \\
\mathbf{\beta }_{4}(\tau )& =\mathbf{a}_{0}(\tau )\oplus
\mathbf{a}_{0}(\tau
)+\sum\limits_{i=1}^{m}\mathbf{b}_{i,0}(\tau )\otimes \mathbf{B}_{i}(\tau )+%
\mathbf{B}_{i}(\tau )\otimes \mathbf{b}_{i,0}(\tau ) \\
\mathbf{\beta }_{5}(\tau )& =\mathbf{a}_{1}(\tau )\oplus
\mathbf{a}_{1}(\tau
)+\sum\limits_{i=1}^{m}\mathbf{b}_{i,1}(\tau )\otimes \mathbf{B}_{i}(\tau )+%
\mathbf{B}_{i}(\tau )\otimes \mathbf{b}_{i,1}(\tau ),
\end{align*}%
$\mathbf{L}=\left[
\begin{array}{ll}
\mathbf{I}_{d} & \mathbf{0}_{d\times 2}%
\end{array}%
\right] $, and the $d$-dimensional identity matrix $\mathbf{I}_{d}$.
Here,
\begin{equation*}
\mathbf{A}(\tau )=\frac{\partial \mathbf{f}(\tau ,\mathbf{y}_{\tau /t_{k}})}{%
\partial \mathbf{y}}\text{ \ \ \ \ \ \ and \ \ \ \ \ \ }\mathbf{B}_{i}(\tau
)=\frac{\partial \mathbf{g}_{i}(\tau ,\mathbf{y}_{\tau
/t_{k}})}{\partial \mathbf{y}}
\end{equation*}%
are matrices, and the vectors $\mathbf{a}_{0}(\tau _{n_{t}})$, $\mathbf{a}%
_{1}(\tau _{n_{t}})$, $\mathbf{b}_{i,0}(\tau _{n_{t}})$ and $\mathbf{b}%
_{i,1}(\tau _{n_{t}})$ satisfy the expressions%
\begin{equation*}
\mathbf{a}^{\beta }(t;\tau _{n_{t}})=\mathbf{a}_{0}(\tau _{n_{t}})+\mathbf{a}%
_{1}(\tau _{n_{t}})(t-\tau _{n_{t}})\text{ \ \ \ \ and \ \ \ \ }\mathbf{b}%
_{i}^{\beta }(t;\tau _{n_{t}})=\mathbf{b}_{i,0}(\tau _{n_{t}})+\mathbf{b}%
_{i,1}(\tau _{n_{t}})(t-\tau _{n_{t}})
\end{equation*}%
for all $t\in \lbrack t_{k},t_{k+1}]$ and $\tau _{n_{t}}\in \left(
\tau \right) _{h}$, where
\begin{equation*}
\mathbf{a}^{\beta }(t;\tau )=\left\{
\begin{array}{ll}
\mathbf{f}(\tau ,\mathbf{y}_{\tau /t_{k}})-\frac{\partial \mathbf{f}(\tau ,%
\mathbf{y}_{\tau /t_{k}})}{\partial \mathbf{y}}\mathbf{y}_{\tau /t_{k}}+%
\frac{\partial \mathbf{f}(\tau ,\mathbf{y}_{\tau /t_{k}})}{\partial \tau }%
(t-\tau ) & \text{for }\mathbb{\beta }=1 \\
\mathbf{a}^{1}(t;\tau )+\frac{1}{2}\sum\limits_{j,l=1}^{d}[\mathbf{G}(\tau ,%
\mathbf{y}_{\tau /t_{k}})\mathbf{G}^{\intercal }(\tau
,\mathbf{y}_{\tau /t_{k}})]^{j,l}\text{ }\frac{\partial
^{2}\mathbf{f}(\tau ,\mathbf{y}_{\tau
/t_{k}})}{\partial \mathbf{y}^{j}\partial \mathbf{y}^{l}}(t-\tau ) & \text{%
for }\mathbb{\beta }=2%
\end{array}%
\right.
\end{equation*}%
and%
\begin{equation*}
\mathbf{b}_{i}^{\beta }(t;\tau )=\left\{
\begin{array}{ll}
\mathbf{g}_{i}(\tau ,\mathbf{y}(\tau ))-\frac{\partial \mathbf{g}_{i}(\tau ,%
\mathbf{y}_{\tau /t_{k}})}{\partial \mathbf{y}}\mathbf{y}_{\tau /t_{k}}+%
\frac{\partial \mathbf{g}_{i}(\tau ,\mathbf{y}_{\tau
/t_{k}})}{\partial \tau
}(t-\tau ) & \text{for }\mathbb{\beta }=1 \\
\mathbf{b}_{i}^{1}(t;\tau )+\frac{1}{2}\sum\limits_{j,l=1}^{d}[\mathbf{G}%
(\tau ,\mathbf{y}_{\tau /t_{k}})\mathbf{G}^{\intercal }(\tau \mathbf{,y}%
_{\tau /t_{k}})]^{j,l}\text{ }\frac{\partial ^{2}\mathbf{g}_{i}(\tau ,%
\mathbf{y}(\tau ))}{\partial \mathbf{y}^{j}\partial
\mathbf{y}^{l}}(t-\tau )
& \text{for }\mathbb{\beta }=2%
\end{array}%
\right.
\end{equation*}%
are functions associated to the order-$\beta $ Ito-Taylor expansions
for the drift and diffusion coefficients of (\ref{SDE PLK}) in the
neighborhood of $(\tau \mathbf{,y} _{\tau /t_{k}})$, respectively,
and $\mathbf{G=[g}_{1},\ldots ,\mathbf{g} _{m}]$ is an $d\times m$
matrix function. The symbols $vec$, $\oplus $ and $ \otimes $ denote
the vectorization operator, the Kronecker sum and product,
respectively.

Under general conditions, Lemma 7 and Theorem 9 in Jimenez (2012b)
state that the order-$\beta $ Local Linear approximation
$\mathbf{y}$ satisfies
the bound condition (\ref{LMVF6}) and the weak convergence criteria (\ref%
{LMVF7}). Hence, Theorem \ref{PLK convergence theorem} implies that
the order-$\beta $ QML estimator
\begin{equation}
\widehat{\mathbf{\theta }}_{M}(h)=\arg \{\underset{\mathbf{\theta }}{\mathbf{%
\min }}U_{M,h}\mathbf{(\theta },Z)\},  \label{LL-based QML
estimator}
\end{equation}%
with%
\begin{equation*}
U_{M,h}(\mathbf{\theta },Z)=(M-1)\ln
(2\pi)+\sum\limits_{k=0}^{M-2}\ln (\det (
\mathbf{V}_{t_{k+1}/t_{k}}))+(\mathbf{z}_{k+1}-\mathbf{y}_{t_{k+1}/t_{k}})^{
\intercal
}(\mathbf{V}_{t_{k+1}/t_{k}})^{-1}(\mathbf{z}_{k+1}-\mathbf{y}
_{t_{k+1}/t_{k}}),
\end{equation*}%
converges to the exact one (\ref{QML estimator}) as $h$ goes to zero
for all given $Z$. For the same reason, the order-$\beta $ QML
estimator (\ref {LL-based QML estimator}) has the asymptotic
properties stated in Theorem \ref{PLK main theorem}, and the average
of their values for different realizations of the SDE satisfies the
convergence property of Theorem \ref{PLK week convergence}.

For one-dimensional SDEs with additive noise, the order-$\beta $ QML
estimator (\ref{LL-based QML estimator}) reduces to the conventional
estimators of Ozaki (1985, 1992) and Shoji \& Ozaki (1997, 1998)
when $\left( \tau \right) _{h}\equiv \{t\}_{M}$. It is worth to
emphasize that, for each data $\mathbf{z}_{k}$, the formulas
(\ref{ALLF8})-(\ref{ALLF9}) are recursively evaluated at all the
time instants $\tau _{n}\in \{\left( \tau \right) _{h}$ $\cap $
$(t_{k},t_{k+1}]\}$ for the first estimator, whereas they are
evaluated only at $t_{k+1}=\{\{t\}_{M}$ $\cap $ $ (t_{k},t_{k+1}]\}$
for the conventional ones.

From computational viewpoint, each evaluation of\ the formulas
(\ref{ALLF8})-(\ref{ALLF9}) at $\tau _{n}$ requires the computation
of just one exponential matrix whose matrix depends of the drift and
diffusion coefficients of (\ref{SDE PLK}) at $(\tau
_{n-1},\mathbf{y}_{\tau _{n-1}/t_{k}})$. This exponential matrix can
the efficiently computed through the well known Pad\'{e} method
(Moler \& Van Loan, 2003) or, alternatively, by means of the Krylov
subspace method (Moler \& Van Loan, 2003) in the case of high
dimensional SDEs. Even more, low order Pad\'{e} and Krylov methods
as suggested in Jimenez \& de la Cruz (2012) can be used as well for
reducing the computation cost, but preserving the order-$\beta $ of
the approximate moments. Alternatively, simplified formulas for the
moments can be used when the equation to be estimate is autonomous
or has additive noise (see Jimenez, 2012a). All this makes simple
and efficient the evaluation of the approximate predictors
$\mathbf{y}_{t_{k+1}/t_{k}}$ and $\mathbf{V}_{t_{k+1}/t_{k}}$.

In practical situations, it is convenient to write a code that
automatically determines the time discretization $\left( \tau
\right) _{h}$ for achieving a prescribed absolute
($atol_{\mathbf{y}},atol_{\mathbf{P}})$ and relative ($
rtol_{\mathbf{y}},rtol_{\mathbf{P}})$ error tolerance in the
computation of $\mathbf{y}_{t_{k+1}/t_{k}}$ and
$\mathbf{P}_{t_{k+1}/t_{k}}$. With this purpose the adaptive
strategy proposed in Jimenez (2012b) is useful.

\section{Simulation study}

In this section, the performance of the new approximate estimators
is illustrated, by means of simulations, with four test SDEs. To do
so, four
types of QML estimators are computed and compared: 1) the exact one (\ref%
{QML estimator}), when it is possible; 2) the conventional one based
on the LL scheme. That is, the estimator defined by (\ref{LL-based
QML estimator}) with $\left( \tau \right)
_{h}\equiv \{t\}_{M}$ and $\beta =1$; 3) the order-$1$ QML estimator (\ref%
{LL-based QML estimator}) with various uniform time discretizations
$\left(
\tau \right) _{h,T}^{u}$; and 4) the adaptive order-$1$ QML estimator (\ref%
{LL-based QML estimator}) with the adaptive selection of time
discretizations $\left( \tau \right) _{\cdot ,T}$ proposed in
Jimenez (2012b). For each example, histograms and confidence limits
for the estimators are computed from various sets of discrete
observations taken with different time distances (sampling periods)
on time intervals with distinct lengths.

\subsection{Test equations}

\textbf{Example 1.} Equation with multiplicative noise%
\begin{equation}
dx=\alpha txdt+\sigma \sqrt{t}xdw_{1}  \label{SE EJ1}
\end{equation}%
where $\alpha =-0.1$ and $\sigma =0.1$ are parameters to be estimated, and $%
x(t_{0})=1$ is the initial value of $x$ at $t_{0}=0.5$. For this
equation,
the conditional mean and variance of $x$ at $t_{k+1}$ given the observation $%
z_{k}$ of $x$ at $t_{k}$ are
\[
\mathbf{\mu }_{k+1}=z_{k}e^{\alpha (t_{k+1}^{2}-t_{k}^{2})/2}\text{
\ and\ \ \ \ }\mathbf{\Sigma }_{k+1}=z_{k}^{2}e^{(\alpha +\sigma
^{2}/2)(t_{k+1}^{2}-t_{k}^{2})}-\mathbf{\mu }_{k+1}^{2},
\]%
respectively, for all $t_{k+1}>t_{k}\geq t_{0}$.

\textbf{Example 2.} Equation with two additive noise%
\begin{equation}
dx=\alpha txdt+\sigma t^{2}e^{\alpha t^{2}/2}dw_{1}+\rho
\sqrt{t}dw_{2} \label{SE EJ2}
\end{equation}%
where $\alpha =-0.25$, $\sigma =5$, and $\rho =0.1$ are parameters
to be estimated, and $x(t_{0})=10$ is the initial value of $x$ at
$t_{0}=0.01$. For this equation, the conditional mean and variance
of $x$ at $t_{k+1}$
given the observation $z_{k}$ of $x$ at $t_{k}$ are%
\[
\mathbf{\mu }_{k+1}=z_{k}e^{\alpha (t_{k+1}^{2}-t_{k}^{2})/2}\text{
}
\]%
and
\[
\mathbf{\Sigma }_{k+1}=\frac{\rho ^{2}}{2\alpha }e^{\alpha
(t_{k+1}^{2}-t_{k}^{2})}+\frac{\sigma ^{2}}{5}(t_{k+1}^{5}-t_{k}^{5})e^{%
\alpha t_{k+1}^{2}}-\frac{\rho ^{2}}{2\alpha },
\]%
respectively, for all $t_{k+1}>t_{k}\geq t_{0}$.

\textbf{Example 3.} Van der Pool oscillator with random input
(Gitterman,
2005)%
\begin{align}
dx_{1}& =x_{2}dt  \label{SEa EJ3} \\
dx_{2}& =(-(x_{1}^{2}-1)x_{2}-x_{1}+\alpha )dt+\sigma dw  \label{SEb
EJ3}
\end{align}%
where $\alpha =0.5$ and $\sigma ^{2}=(0.75)^{2}$ are, respectively,
the intensity and the variance of the random input that should be
estimated. In addition, $t_{0}=0$, and $\mathbf{x}^{\intercal
}(t_{0})=[1$ $1]$.

\textbf{Example 4.} Van der Pool oscillator with random frequency
(Gitterman, 2005)%
\begin{align}
dx_{1}& =x_{2}dt  \label{SEa EJ4} \\
dx_{2}& =(-(x_{1}^{2}-1)x_{2}-\alpha x_{1})dt+\sigma x_{1}dw
\label{SEb EJ4}
\end{align}%
where $\alpha =1$ and $\sigma ^{2}=1$ are, respectively, the
frequency mean
value and variance that should be estimated. In addition, $t_{0}=0$, and $%
\mathbf{x}^{\intercal }(t_{0})=[1$ $1]$.

In these examples, autonomous or non autonomous, linear or
nonlinear, one or two dimensional equations with additive or
multiplicative noise are considered for the estimation of two or
three parameters. Note that, since the first two conditional moments
of the Examples 1 and 2 have explicit expressions, the exact QML
estimator (\ref{QML estimator}) can be computed.

These four equations have previously been used in Jimenez (2012b) to
illustrate the convergence of the approximate moments (\ref{ALLF8})-(\ref%
{ALLF9}) by means of simulations. Tables with the errors between the
approximate moments and the exact ones as a function of $h$ were
given for the Examples 1 and 2. Tables with the estimated rate of
convergence were provided for the fours examples.

\subsection{Simulations with one-dimensional equations}

For the first two examples, $100$ realizations of the solution were
computed by means of the Euler (Kloeden \& Platen, 1999) or the
Local Linearization scheme (Jimenez et al., 1999) for the equation
with multiplicative or additive noise, respectively. For each
example, the realizations where computed over the thin time
partition $\{t_{0}+10^{-4}n:n=0,..,30\times 10^{4}\}$ to guarantee a
precise simulation of the stochastic solutions on the time interval
$[t_{0},t_{0}+30]$. Twelve subsamples of each realization at the
time instants $\{t\}_{M,T}=\{t_{k}=t_{0}+kT/M:$ $k=0,..,M-1\}$ were
taken as observation $Z$ of $\mathbf{x}$ for making inference with
various values of $M$ and $T$. In particular, the values $T=10,20,30$ and $%
M=T/\delta $ with $\delta =1,0.1,0.01,0.001$ were used. In this way,
twelve
sets of $100$ time series $Z_{\delta ,T}^{i}=\{z_{k}^{i}:k=0,..,M-1,$ $%
M=T/\delta \}$, with $i=1,..,100$, of $M$ observations $z_{k}^{i}$
each one were finally available for each example with the twelve
values of $(\delta ,T)$ mentioned above. This will allow us to
explore and compare the performance of each estimator from
observations taken with different sampling periods $\delta $\ on
time intervals with distinct lengths $T$.

Figure 1 shows the histograms and the confidence limits for both, the exact (%
$\widehat{\alpha }_{\delta ,T}^{E}$) and the conventional ($\widehat{\alpha }%
_{\delta ,T}$) QML estimators of $\alpha $ computed from the twelve sets of $%
100$ time series $Z_{\delta ,T}^{i}$ available for the example 1.
Figure 2 shows the same but, for the exact ($\widehat{\sigma
}_{\delta ,T}^{E}$) and the conventional ($\widehat{\sigma }_{\delta
,T}$) QML estimators of $\sigma $. As it was expected, for the
samples $Z_{\delta ,T}^{i}$ with largest sampling periods, the
parameter estimation is distorted by the well-known lowpass filter
effect of signals sampling (see, e.g., Oppenheim \& Schafer, 2010).
This is the reason of the under estimation of the variance
$\widehat{\sigma }_{\delta ,T}^{E}$ from the samples $Z_{\delta
,T}^{i}$, with $\delta =1$ and $T=10,20,30$, when the parameter
$\alpha $ in
the drift coefficient of (\ref{SE EJ1}) is better estimated by $\widehat{%
\alpha }_{\delta ,T}^{E}$. Contrarily, from these samples, the
conventional QML estimators $\widehat{\alpha }_{\delta ,T}$ can not
provided a good approximation to $\alpha $, and so the whole
unexplained component of the drift term of (\ref{SE EJ1}) included
in the samples is interpreted
as noise by the conventional QML estimators. For this reason, $\widehat{%
\sigma }_{\delta ,T}$ over estimates the value of the parameter
$\sigma $. Further, note that when the sampling period $\delta $
decreases and the length
$T$ of the observation time increases, the difference between the exact ($%
\widehat{\alpha }_{\delta ,T}^{E},\widehat{\sigma }_{\delta
,T}^{E}$) and the conventional ($\widehat{\alpha }_{\delta
,T},\widehat{\sigma }_{\delta ,T}$) QML estimators decreases, as
well as the bias of both estimators. This is also other expected
result. Here, the bias is estimated by the difference between the
parameter value and the estimator average, whereas the difference
between estimators refers to the histogram shape and confidence
limits.{\small \newline }

\begin{tabular}{|l|c|c|c|c|}
\hline $\delta =1$ & $h=\delta $ & $h=\delta /2$ & $h=\delta /8$ &
$h=\delta /32$
\\ \hline
\begin{tabular}{ll}
& $T=10$ \\
$\alpha$ & $T=20$ \\
& $T=30$%
\end{tabular}
& \multicolumn{1}{|l|}{%
\begin{tabular}{l}
$5.7\pm 3.7\times 10^{-3}$ \\ \hline $3.7\pm 1.6\times 10^{-3}$ \\
\hline
$5.8\pm 1.6\times 10^{-3}$%
\end{tabular}%
} & \multicolumn{1}{|l|}{%
\begin{tabular}{l}
$1.3\pm 0.9\times 10^{-3}$ \\ \hline $6.6\pm 3.8\times 10^{-4}$ \\
\hline
$8.1\pm 3.4\times 10^{-4}$%
\end{tabular}%
} & \multicolumn{1}{|l|}{%
\begin{tabular}{l}
$2.2\pm 1.6\times 10^{-4}$ \\ \hline $8.2\pm 7.6\times 10^{-5}$ \\
\hline
$5.4\pm 4.6\times 10^{-5}$%
\end{tabular}%
} & \multicolumn{1}{|l|}{%
\begin{tabular}{l}
$5.4\pm 4.0\times 10^{-5}$ \\ \hline $2.3\pm 2.1\times 10^{-5}$ \\
\hline
$1.7\pm 1.4\times 10^{-5}$%
\end{tabular}%
} \\ \hline
\begin{tabular}{ll}
& $T=10$ \\
$\sigma $ & $T=20$ \\
& $T=30$%
\end{tabular}
& \multicolumn{1}{|l|}{%
\begin{tabular}{l}
$2.8\pm 2.0\times 10^{-2}$ \\ \hline $1.4\pm 1.3\times 10^{-2}$ \\
\hline
$8.4\pm 8.7\times 10^{-3}$%
\end{tabular}%
} & \multicolumn{1}{|l|}{%
\begin{tabular}{l}
$8.6\pm 5.2\times 10^{-3}$ \\ \hline $4.5\pm 2.9\times 10^{-3}$ \\
\hline
$2.6\pm 2.1\times 10^{-3}$%
\end{tabular}%
} & \multicolumn{1}{|l|}{%
\begin{tabular}{l}
$1.7\pm 0.9\times 10^{-3}$ \\ \hline $9.4\pm 5.1\times 10^{-4}$ \\
\hline
$6.2\pm 3.6\times 10^{-4}$%
\end{tabular}%
} & \multicolumn{1}{|l|}{%
\begin{tabular}{l}
$4.0\pm 2.1\times 10^{-4}$ \\ \hline $2.2\pm 1.1\times 10^{-4}$ \\
\hline
$1.5\pm 8.4\times 10^{-5}$%
\end{tabular}%
} \\ \hline
\end{tabular}

{\small Table I. Confidence limits for the error between the exact
and the
approximate QML estimators of the equation (\ref{SE EJ1}). }$h=\delta $%
{\small , for the conventional; and }$h=\delta /2,\delta /8,\delta /32,$%
{\small \ for the order-$1$ on }$\left( \tau \right) _{h,T}^{u}${\small .%
\newline
}

\begin{tabular}{|c|c|c|}
\hline $\delta =1$ & $\alpha$ & $\sigma $ \\ \hline
\begin{tabular}{c}
$h$ \\ \hline $\delta $ \\ \hline $\delta /2$ \\ \hline $\delta /8$
\\ \hline $\delta /32$ \\ \hline
$\cdot $%
\end{tabular}
&
\begin{tabular}{ccc}
$T=10$ & $T=20$ & $T=30$ \\ \hline \multicolumn{1}{r}{$-0.0030$} &
\multicolumn{1}{r}{$-0.0035$} & \multicolumn{1}{r}{$-0.0058$} \\
\hline \multicolumn{1}{r}{$-0.0006$} & \multicolumn{1}{r}{$-0.0005$}
& \multicolumn{1}{r}{$-0.0008$} \\ \hline
\multicolumn{1}{r}{$0$} & \multicolumn{1}{r}{$0.0001$} & \multicolumn{1}{r}{$%
0$} \\ \hline \multicolumn{1}{r}{$0$} & \multicolumn{1}{r}{$0$} &
\multicolumn{1}{r}{$0$}
\\ \hline
\multicolumn{1}{r}{$0$} & \multicolumn{1}{r}{$0$} & \multicolumn{1}{r}{$%
-0.0003$}%
\end{tabular}
&
\begin{tabular}{ccc}
$T=10$ & $T=20$ & $T=30$ \\ \hline \multicolumn{1}{r}{$-0.0286$} &
\multicolumn{1}{r}{$-0.0150$} & \multicolumn{1}{r}{$-0.0071$} \\
\hline \multicolumn{1}{r}{$-0.0086$} & \multicolumn{1}{r}{$-0.0045$}
& \multicolumn{1}{r}{$-0.0027$} \\ \hline
\multicolumn{1}{r}{$-0.0017$} & \multicolumn{1}{r}{$-0.0009$} &
\multicolumn{1}{r}{$-0.0006$} \\ \hline
\multicolumn{1}{r}{$-0.0004$} & \multicolumn{1}{r}{$-0.0002$} &
\multicolumn{1}{r}{$-0.0002$} \\ \hline
\multicolumn{1}{r}{$-0.0002$} & \multicolumn{1}{r}{$-0.0002$} &
\multicolumn{1}{r}{$-0.0013$}%
\end{tabular}
\\ \hline
\end{tabular}

{\small Table II: Difference between the averages of the exact and
the
approximate QML estimators for the equation (\ref{SE EJ1}). }$h=\delta $%
{\small , for the conventional; }$h=\delta /2,\delta /8,\delta /32,${\small %
\ for the order-$1$ on }$\left( \tau \right) _{h,T}^{u}${\small ; and }$%
h=\cdot ,${\small \ for the adaptive order-$1$ on }$\left( \tau
\right) _{\cdot ,T}${\small .\newline }

For the data of (\ref{SE EJ1}) with largest sampling period $\delta
=1$, the
order-$1$ QML estimators ($\widehat{\alpha }_{h,\delta ,T}^{u},\widehat{%
\sigma }_{h,\delta ,T}^{u}$) and ($\widehat{\alpha }_{\cdot ,\delta ,T},%
\widehat{\sigma }_{\cdot ,\delta ,T}$) on uniform $\left( \tau
\right) _{h,T}^{u}=\{\tau _{n}=t_{0}+nh:$ $n=0,..,T/h\}\supset
\{t\}_{T/\delta ,T}$ and adaptive $\left( \tau \right) _{\cdot
,T}\supset \{t\}_{T/\delta ,T}$ time discretizations, respectively,
were computed with $h=\delta
/2,\delta /8,\delta /32$ and tolerances $rtol_{\mathbf{y}}=rtol_{\mathbf{P}%
}=5\times 10^{-6}$ and $atol_{\mathbf{y}}=5\times 10^{-9}$, $atol_{\mathbf{P}%
}=5\times 10^{-12}$. For each data $Z_{\delta ,T}^{i}$, with
$i=1,..,100$, the errors
\begin{equation*}
\varepsilon _{i}(\alpha ,h,\delta ,T)=\left\vert \widehat{\alpha
}_{\delta
,T}^{E}-\widehat{\alpha }_{h,\delta ,T}^{u}\right\vert \text{ and }%
\varepsilon _{i}(\sigma ,h,\delta ,T)=\left\vert \widehat{\sigma
}_{\delta ,T}^{E}-\widehat{\sigma }_{h,\delta ,T}^{u}\right\vert
\end{equation*}%
between the exact ($\widehat{\alpha }_{\delta ,T}^{E},\widehat{\sigma }%
_{\delta ,T}^{E}$) and the approximate ($\widehat{\alpha }_{h,\delta ,T}^{u},%
\widehat{\sigma }_{h,\delta ,T}^{u}$) QML estimators were computed.
Average and standard deviation of these $100$ errors were calculated
for each set of values $h,\delta ,T$ specified above, which are
summarized in Table I. Note as, for fixed $T$, the average of the
errors decreases as $h$ does it. This clearly illustrates the
convergence of the order-$1$ QML estimators to the exact one stated
in Theorem \ref{PLK convergence theorem} when $h$ goes to zero. In
addition, Figure 3 shows the histograms and the confidence limits
for the order-$1$ QML estimators ($\widehat{\alpha }_{h,\delta ,T}^{u},%
\widehat{\sigma }_{h,\delta ,T}^{u}$) and ($\widehat{\alpha }_{\cdot
,\delta ,T},\widehat{\sigma }_{\cdot ,\delta ,T}$) for each set of
values $h,\delta ,T$. By comparing the results of this figure with
the corresponding in the previous ones, the decreasing difference
between the order-$1$ QML estimators ($\widehat{\alpha }_{h,\delta
,T}^{u},\widehat{\sigma }_{h,\delta
,T}^{u}$) and the exact one ($\widehat{\alpha }_{\delta ,T}^{E},\widehat{%
\sigma }_{\delta ,T}^{E}$) is observed as $h$ decreases, which is
consistent with the convergence results of Table I. Similarly, for
$T=10,20$, the
difference between the order-$1$ QML estimators ($\widehat{\alpha }%
_{h,\delta ,T}^{u},\widehat{\sigma }_{h,\delta ,T}^{u}$) and the
adaptive
QML estimators ($\widehat{\alpha }_{\cdot ,\delta ,T},\widehat{\sigma }%
_{\cdot ,\delta ,T}$) decreases when $h$ does it, due to the
negligible difference between the adaptive QML estimators
($\widehat{\alpha }_{\cdot
,\delta ,T},\widehat{\sigma }_{\cdot ,\delta ,T}$) with the exact ones ($%
\widehat{\alpha }_{\delta ,T}^{E},\widehat{\sigma }_{\delta
,T}^{E}$). This illustrates the usefulness of the adaptive strategy
for improving the QML\ parameter estimation for finite samples with
large sampling periods. These findings are more precisely summarized
in Table II, which shows the difference between the averages of the
exact and the approximate QML estimators. Observe the lightly higher
difference between the averages of the exact and the adaptive
estimators for both parameters when $T=30$. The reason is that, for
$t_{k}>21 $, the variance $\mathbf{\Sigma }_{k}$ of the diffusion
(\ref{SE EJ1}) becomes almost indistinguishable of zero. This is so
small that the roundoff errors becomes significant in such a way
that the adaptive strategy under estimates the integration errors
and so over estimates the length of the step sizes. This can be seem
in Figure 4, which shows the average of accepted and fail steps of
the adaptive QML estimators at each $t_{k}\in \{t\}_{T/\delta ,T}$.
Note that, for $t_{k}>21$, the number of accepted steps is lower
that $8.$ Therefore, the adaptive estimator can not be so good as
those on uniform time discretization with $h=1/8,1/32$ in Table II.
Contrary, for $t_{k}<21$, the number of accepted steps is larger
that $32$, and the adaptive estimator performs better than all the
others. Further, note that the results of Table II illustrate the
convergence findings of Theorem \ref{PLK week convergence}.
\newline

\resizebox{\textwidth}{!}{
\begin{tabular}{|c|c|c|c|}
\hline $\delta =0.1$ & $\alpha$ & $\sigma $ & $\rho $ \\ \hline
\begin{tabular}{c}
$h$ \\ \hline $\delta $ \\ \hline $\delta /2$ \\ \hline $\delta /4$
\\ \hline $\delta /8$ \\ \hline $\cdot $\end{tabular} &
\begin{tabular}{ccc}
$T=10$ & $T=20$ & $T=30$ \\ \hline \multicolumn{1}{r}{$0.00048$} &
\multicolumn{1}{r}{$0.00045$} & \multicolumn{1}{r}{$0.00043$} \\
\hline \multicolumn{1}{r}{$0.00014$} & \multicolumn{1}{r}{$0.00013$}
& \multicolumn{1}{r}{$0.00011$} \\ \hline
\multicolumn{1}{r}{$0.00003$} & \multicolumn{1}{r}{$0.00004$} &
\multicolumn{1}{r}{$0.00003$} \\ \hline \multicolumn{1}{r}{$0$} &
\multicolumn{1}{r}{$0.00001$} & \multicolumn{1}{r}{$0$} \\ \hline
\multicolumn{1}{r}{$-0.00008$} & \multicolumn{1}{r}{$0.00004$} &
\multicolumn{1}{r}{$-0.00006$}\end{tabular} &
\begin{tabular}{ccc}
$T=10$ & $T=20$ & $T=30$ \\ \hline \multicolumn{1}{r}{$-0.0405$} &
\multicolumn{1}{r}{$-0.0397$} & \multicolumn{1}{r}{$-0.0396$} \\
\hline \multicolumn{1}{r}{$-0.0110$} & \multicolumn{1}{r}{$-0.0100$}
& \multicolumn{1}{r}{$-0.0100$} \\ \hline
\multicolumn{1}{r}{$-0.0025$} & \multicolumn{1}{r}{$-0.0023$} &
\multicolumn{1}{r}{$-0.0022$} \\ \hline
\multicolumn{1}{r}{$-0.0004$} & \multicolumn{1}{r}{$-0.0004$} &
\multicolumn{1}{r}{$-0.0004$} \\ \hline \multicolumn{1}{r}{$0.0049$}
& \multicolumn{1}{r}{$0.0010$} &
\multicolumn{1}{r}{$0.0092$}\end{tabular} &
\begin{tabular}{ccc}
$T=10$ & $T=20$ & $T=30$ \\ \hline \multicolumn{1}{r}{$-0.00015$} &
$4.4\times 10^{-5}$ & $3.9\times 10^{-5}$
\\ \hline
\multicolumn{1}{r}{$-0.00009$} & $0.9\times 10^{-5}$ & $1.4\times
10^{-5}$
\\ \hline
\multicolumn{1}{r}{$-0.00002$} & $0.8\times 10^{-5}$ & $1.0\times
10^{-5}$
\\ \hline
\multicolumn{1}{r}{$0$} & $0.5\times 10^{-5}$ & $0.6\times 10^{-5}$
\\ \hline \multicolumn{1}{r}{$0.00011$} & $0$ & $0.4\times
10^{-5}$\end{tabular}
\\ \hline
\end{tabular}
}

{\small Table III: Difference between the averages of the exact and
the
approximate QML estimators for the equation (\ref{SE EJ2}). }$h=\delta $%
{\small , for the conventional; }$h=\delta /2,\delta /4,\delta
/8,${\small \
for the order-$1$ on }$\left( \tau \right) _{h,T}^{u}${\small ; and }$%
h=\cdot ,${\small \ for the adaptive order-$1$ on }$\left( \tau
\right) _{\cdot ,T}${\small .\newline }

\begin{tabular}{|l|c|c|c|c|}
\hline $\delta =0.1$ & $h=\delta $ & $h=\delta /2$ & $h=\delta /4$ &
$h=\delta /8$
\\ \hline
\begin{tabular}{ll}
& $T=10$ \\
$\alpha$ & $T=20$ \\
& $T=30$%
\end{tabular}
& \multicolumn{1}{|l|}{%
\begin{tabular}{l}
$6.6\pm 4.9\times 10^{-4}$ \\ \hline $6.8\pm 4.7\times 10^{-4}$ \\
\hline
$6.7\pm 4.6\times 10^{-4}$%
\end{tabular}%
} & \multicolumn{1}{|l|}{%
\begin{tabular}{l}
$1.6\pm 1.2\times 10^{-4}$ \\ \hline $1.4\pm 0.9\times 10^{-4}$ \\
\hline
$1.4\pm 0.9\times 10^{-4}$%
\end{tabular}%
} & \multicolumn{1}{|l|}{%
\begin{tabular}{l}
$3.9\pm 2.9\times 10^{-5}$ \\ \hline $3.4\pm 2.1\times 10^{-5}$ \\
\hline
$3.3\pm 2.1\times 10^{-5}$%
\end{tabular}%
} & \multicolumn{1}{|l|}{%
\begin{tabular}{l}
$9.9\pm 8.4\times 10^{-6}$ \\ \hline $9.3\pm 5.8\times 10^{-6}$ \\
\hline
$8.2\pm 5.9\times 10^{-6}$%
\end{tabular}%
} \\ \hline
\begin{tabular}{ll}
& $T=10$ \\
$\sigma $ & $T=20$ \\
& $T=30$%
\end{tabular}
& \multicolumn{1}{|l|}{%
\begin{tabular}{l}
$5.9\pm 4.4\times 10^{-2}$ \\ \hline $6.0\pm 4.2\times 10^{-2}$ \\
\hline
$5.9\pm 4.1\times 10^{-2}$%
\end{tabular}%
} & \multicolumn{1}{|l|}{%
\begin{tabular}{l}
$1.3\pm 0.8\times 10^{-2}$ \\ \hline $1.2\pm 0.8\times 10^{-2}$ \\
\hline
$1.1\pm 0.7\times 10^{-2}$%
\end{tabular}%
} & \multicolumn{1}{|l|}{%
\begin{tabular}{l}
$2.6\pm 1.8\times 10^{-3}$ \\ \hline $2.4\pm 1.6\times 10^{-3}$ \\
\hline
$2.4\pm 1.6\times 10^{-3}$%
\end{tabular}%
} & \multicolumn{1}{|l|}{%
\begin{tabular}{l}
$5.1\pm 5.1\times 10^{-4}$ \\ \hline $4.9\pm 4.0\times 10^{-4}$ \\
\hline
$4.4\pm 3.8\times 10^{-4}$%
\end{tabular}%
} \\ \hline
\begin{tabular}{ll}
& $T=10$ \\
$\rho $ & $T=20$ \\
& $T=30$%
\end{tabular}
& \multicolumn{1}{|l|}{%
\begin{tabular}{l}
$1.1\pm 1.3\times 10^{-3}$ \\ \hline $1.6\pm 1.5\times 10^{-4}$ \\
\hline
$9.0\pm 7.4\times 10^{-5}$%
\end{tabular}%
} & \multicolumn{1}{|l|}{%
\begin{tabular}{l}
$2.9\pm 4.0\times 10^{-4}$ \\ \hline $5.1\pm 4.6\times 10^{-5}$ \\
\hline
$3.1\pm 2.2\times 10^{-5}$%
\end{tabular}%
} & \multicolumn{1}{|l|}{%
\begin{tabular}{l}
$6.2\pm 8.3\times 10^{-5}$ \\ \hline $1.5\pm 0.8\times 10^{-5}$ \\
\hline
$1.1\pm 0.4\times 10^{-5}$%
\end{tabular}%
} & \multicolumn{1}{|l|}{%
\begin{tabular}{l}
$1.5\pm 1.9\times 10^{-5}$ \\ \hline $6.2\pm 2.4\times 10^{-6}$ \\
\hline
$5.8\pm 1.9\times 10^{-6}$%
\end{tabular}%
} \\ \hline
\end{tabular}

{\small Table IV: Confidence limits for the error between the exact
and the
approximate QML estimators of the equation (\ref{SE EJ2}). }$h=\delta $%
{\small , for the conventional; and }$h=\delta /2,\delta /4,\delta /8,$%
{\small \ for the order-$1$ on }$\left( \tau \right) _{h,T}^{u}${\small .%
\newline
}

Figure 5 shows the histograms and the confidence limits for both, the exact (%
$\widehat{\alpha }_{\delta ,T}^{E}$) and the conventional ($\widehat{\alpha }%
_{\delta ,T}$) QML estimators of $\alpha $ computed from the twelve sets of $%
100$ time series $Z_{\delta ,T}^{i}$ available for the example 2.
Figure 6 shows the same but, for the exact ($\widehat{\sigma
}_{\delta ,T}^{E}$) and the conventional ($\widehat{\sigma }_{\delta
,T}$) QML estimators of $\sigma $, whereas Figure 7 does it for the
estimators $\widehat{\rho }_{\delta ,T}^{E}$ and $\widehat{\rho
}_{\delta ,T}$ of $\rho $. Note that, for this example, the
diffusion parameters $\sigma $ and $\rho $ can not be estimated from
the samples $Z_{\delta ,T}^{i}$ with the largest sampling period
$\delta =1$. From the other data with sampling period $\delta <1$,
the tree parameters can be estimated and, the bias of the exact and
the conventional QML estimators is not so large as in the previous
example. Nevertheless, in this extreme situation of low information
in the data, the order-$1$ QML estimators is able to improve the
accuracy of the parameter estimation when $h$ decreases. This is
shown in Figure 8 for the samples $Z_{\delta ,T}^{i}$ with $\delta
=0.1$ and $T=10,20,30$, and summarized in Table III. The order-$1$
QML estimators ($\widehat{\alpha }_{h,\delta ,T}^{u},\widehat{\sigma
}_{h,\delta ,T}^{u},\widehat{\rho }_{h,\delta ,T}^{u}$) and
($\widehat{\alpha }_{\cdot ,\delta ,T},\widehat{\sigma }_{\cdot
,\delta ,T},\widehat{\rho }_{\cdot ,\delta ,T}$) are again computed
on uniform $\left( \tau \right) _{h,T}^{u}\supset \{t\}_{T/\delta
,T}$ and adaptive $\left( \tau \right) _{\cdot ,T}\supset
\{t\}_{T/\delta ,T}$ time discretizations, respectively, with
$T=10,20,30$, $h=\delta /2,\delta /4,\delta /8$ and
tolerances $rtol_{\mathbf{y}}=rtol_{\mathbf{P}}=10^{-6}$ and $atol_{\mathbf{y%
}}=10^{-9}$, $atol_{\mathbf{P}}=10^{-12}$. The average of accepted
and fail steps of the adaptive QML estimators at each $t_{k}\in
\{t\}_{T/\delta ,T}$ are shown in Figure 4. Note that, the average
of the accepted steps of the adaptive algorithm is not bigger than
$8$ for each $t_{k}\in \{t\}_{T/\delta ,T}$. Therefore, as it is
shown in Table III, the average of the adaptive estimator is not so
good than that of some other estimators on uniform time
discretizations $\left( \tau \right) _{h,T}^{u}$ with $h\geq\delta
/8$. For this example, Table IV gives the confidence limits for the
error between the exact and the order-$1$ QML estimators for
different values of $h$. Note that, Table III and IV illustrate the
convergence results of Theorems \ref{PLK week convergence} and
\ref{PLK convergence theorem}, respectively.

\subsection{Simulations with two-dimensional equations}

For the examples 3 and 4, $100$ realizations of the equation were
similarly computed by means of the Local Linearization and the Euler
scheme, respectively. For each example, the realizations where
computed over the thin time partition
$\{t_{0}+10^{-4}n:n=0,..,30\times 10^{4}\}$ for guarantee a precise
simulation of the stochastic solutions on the time interval
$[t_{0},t_{0}+30]$. Two subsamples of each realization at the time
instants $\{t\}_{M,T}=\{t_{k}=t_{0}+kT/M:$ $k=0,..,M-1\}$ were taken
as observation $Z$ of $\mathbf{x}$ for making inference with $T=30$
and two values of $M$. In particular, $M=30,300$ were used, which
correspond to the
sampling periods $\delta =1,0.1$. In this way, two sets of $100$ time series $%
Z_{\delta ,T}^{i}=\{\mathbf{z}_{k}^{i}:k=0,..,M-1,M=T/\delta \}$, with $%
i=1,..,100$, of $M$ observations $\mathbf{z}_{k}^{i}$ each one were
available for each example with the two values of $(\delta ,T)$
mentioned above.{\small \newline }

\begin{tabular}{|c|c|c|}
\hline $T=30$ & $\alpha$ & $\sigma $ \\ \hline
\begin{tabular}{c}
$h$ \\ \hline $\delta $ \\ \hline $\delta /16$ \\ \hline $\delta
/64$ \\ \hline
$\cdot $%
\end{tabular}
&
\begin{tabular}{cc}
$\delta =1$ & $\delta =0.1$ \\ \hline \multicolumn{1}{r}{$-0.2500$}
& \multicolumn{1}{r}{$-0.1428$} \\ \hline
\multicolumn{1}{r}{$-0.0965$} & \multicolumn{1}{r}{$-0.0044$} \\
\hline \multicolumn{1}{r}{$-0.0333$} & \multicolumn{1}{r}{$0.0029$}
\\ \hline
\multicolumn{1}{r}{$-0.0096$} & \multicolumn{1}{r}{$0.0068$}%
\end{tabular}
&
\begin{tabular}{cc}
$\delta =1$ & $\delta =0.1$ \\ \hline \multicolumn{1}{r}{$-0.7328$}
& \multicolumn{1}{r}{$-0.0052$} \\ \hline
\multicolumn{1}{r}{$-0.0893$} & \multicolumn{1}{r}{$0.0012$} \\
\hline \multicolumn{1}{r}{$-0.0757$} & \multicolumn{1}{r}{$0.0013$}
\\ \hline
\multicolumn{1}{r}{$-0.0739$} & \multicolumn{1}{r}{$0.0013$}%
\end{tabular}
\\ \hline
\end{tabular}

{\small Table V: Bias of the approximate QML estimators for the equation (%
\ref{SEa EJ3})-(\ref{SEb EJ3}). }$h=\delta ${\small , for the conventional; }%
$h=\delta /16,\delta /64,${\small \ for the order-$1$ on }$\left(
\tau \right) _{h,T}^{u}${\small ; and }$h=\cdot ,${\small \ for the
adaptive order-$1$ on }$\left( \tau \right) _{\cdot ,T}${\small
.\newline }

\begin{tabular}{|c|c|c|}
\hline $T=30$ & $\alpha$ & $\sigma $ \\ \hline
\begin{tabular}{c}
$h$ \\ \hline $\delta $ \\ \hline $\delta /8$ \\ \hline $\delta /32$
\\ \hline
$\cdot $%
\end{tabular}
&
\begin{tabular}{cc}
$\delta =1$ & $\delta =0.1$ \\ \hline \multicolumn{1}{r}{$-0.8000$}
& \multicolumn{1}{r}{$-0.2507$} \\ \hline
\multicolumn{1}{r}{$-0.4234$} & \multicolumn{1}{r}{$-0.0481$} \\
\hline \multicolumn{1}{r}{$-0.2210$} & \multicolumn{1}{r}{$-0.0219$}
\\ \hline
\multicolumn{1}{r}{$-0.1611$} & \multicolumn{1}{r}{$-0.0046$}%
\end{tabular}
&
\begin{tabular}{cc}
$\delta =1$ & $\delta =0.1$ \\ \hline \multicolumn{1}{r}{$-0.8000$}
& \multicolumn{1}{r}{$-0.3748$} \\ \hline
\multicolumn{1}{r}{$-0.2451$} & \multicolumn{1}{r}{$0.0005$} \\
\hline \multicolumn{1}{r}{$-0.1910$} & \multicolumn{1}{r}{$0.0015$}
\\ \hline
\multicolumn{1}{r}{$-0.1898$} & \multicolumn{1}{r}{$0.0001$}%
\end{tabular}
\\ \hline
\end{tabular}

{\small Table VI: Bias of the approximate QML estimators for the equation (%
\ref{SEa EJ4})-(\ref{SEb EJ4}). }$h=\delta ${\small , for the conventional; }%
$h=\delta /8,\delta /32,${\small \ for the order-$1$ on }$\left(
\tau \right) _{h,T}^{u}${\small ; and }$h=\cdot ,${\small \ for the
adaptive order-$1$ on }$\left( \tau \right) _{\cdot ,T}${\small
.}\newline

For both examples, the order-$1$ QML estimators ($\widehat{\alpha }%
_{h,\delta ,T}^{u},\widehat{\sigma }_{h,\delta ,T}^{u}$) and ($\widehat{%
\alpha }_{\cdot ,\delta ,T},\widehat{\sigma }_{\cdot ,\delta ,T}$)
on uniform $\left( \tau \right) _{h,T}^{u}\supset \{t\}_{T/\delta
,T}$ and adaptive $\left( \tau \right) _{\cdot ,T}\supset
\{t\}_{T/\delta ,T}$ time discretizations, respectively, were
computed from the two sets of $100$ data $Z_{\delta ,T}^{i}$ with
$T=30$ and $\delta =1,0.1$. The values of $h$ were set as $h=\delta
,\delta /16,\delta /64$ for the example 3, and as $h=\delta ,\delta
/8,\delta /32$ for the example 4. The tolerances for the adaptive
estimators were set as in the first example. Figures 9 and 11
show the histograms and the confidence limits for the estimators ($\widehat{%
\alpha }_{h,\delta ,T}^{u},\widehat{\sigma }_{h,\delta ,T}^{u}$) and ($%
\widehat{\alpha }_{\cdot ,\delta ,T},\widehat{\sigma }_{\cdot
,\delta ,T}$) corresponding to each example. For the two examples,
the difference between
the order-$1$ QML estimator ($\widehat{\alpha }_{h,\delta ,T}^{u},\widehat{%
\sigma }_{h,\delta ,T}^{u}$) and the adaptive one ($\widehat{\alpha
}_{\cdot ,\delta ,T},\widehat{\sigma }_{\cdot ,\delta ,T}$)
decreases when $h$ does it. This is, according Theorem \ref{PLK
convergence theorem}, an expected result by assuming that the
difference between the adaptive and the exact QLM estimators is
negligible for $\left( \tau \right) _{\cdot ,T}$ thin enough. In
addition, Table V and VI show the bias of the approximate QML
estimators for these examples. Observe as the adaptive
($\widehat{\alpha }_{\cdot ,\delta ,T},\widehat{\sigma }_{\cdot
,\delta ,T}$) and the order-$1$ QML estimator ($\widehat{\alpha
}_{h,\delta ,T}^{u},\widehat{\sigma }_{h,\delta ,T}^{u}$) with
$h<\delta $ provide much less biased estimation of the
parameters $(\alpha ,\sigma )$ than the conventional QML estimator ($%
\widehat{\alpha }_{\delta ,\delta ,T}^{u},\widehat{\sigma }_{\delta
,\delta ,T}^{u}$), which is in fact unable to identify the
parameters of the examples. Clearly, this illustrates the usefulness
of the order-$1$ QML estimator and its adaptive implementation.
However, as it is shown in Table
V for $\delta =0.1$, no always the adaptive estimator ($\widehat{\alpha }%
_{\cdot ,\delta ,T},\widehat{\sigma }_{\cdot ,\delta ,T}$) is less
unbiased
than the order-$1$ QML estimator ($\widehat{\alpha }_{h,\delta ,T}^{u},%
\widehat{\sigma }_{h,\delta ,T}^{u}$) for some $h<\delta $. This can
happen for one of following reasons: 1) the bias of the exact QML\
estimator when the adaptive estimator is close enough to it, or 2)
an insufficient number of accepted steps of the adaptive estimator
for a given tolerance. In our
case, since ($\widehat{\alpha }_{h,\delta ,T}^{u},\widehat{\sigma }%
_{h,\delta ,T}^{u}$) converges to ($\widehat{\alpha }_{\cdot ,\delta ,T},%
\widehat{\sigma }_{\cdot ,\delta ,T}$) as $h$ decreases (Figure 9 with $%
\delta =0.1$) and the average of accepted steps of the adaptive
estimators is acceptable (Figure 10 with $\delta =0.1$), the first
explanation is more suitable. Figures 10 and 12 show the average of
accepted and fail steps of the adaptive QML estimators at each
$t_{k}\in \{t\}_{T/\delta ,T}$ for each example. Note how the
average of accepted steps corresponding to the estimators from
samples with $\delta =0.1$ is ten time lower than that of the
estimators from samples with $\delta =1$, which is an expected
result as well.

\section{Conclusions}

A modification of the conventional approximations to the
quasi-maximum likelihood (QML) method was introduced for the
parameter estimation of diffusion processes given a time series of
complete observations. This is based on a recursive approximation to
the first two conditional moments of the diffusion process through
discrete-time schemes. For finite samples, the convergence of the
modified QML estimators to the exact one was proved when the error
between the discrete-time approximation and the diffusion process
decreases. It was also demonstrated that, for an increasing number
of observations, they are asymptotically normal distributed and
their bias decreases when the above mentioned error does it. As
particular instance, the order-$\beta $ QML estimators based on
Local Linearization schemes were proposed. For them, practical
algorithms were also provided and their performance in simulation
illustrated with various examples. Simulations shown that: 1) with
thin time discretizations between observations, the order-$1$ QML
estimator provides satisfactory approximations to the exact QML
estimator; 2) the convergence of the order-$1$ QML estimator to the
exact one when the maximum stepsize of the time discretization
between observations decreases; 3) with respect to the conventional
QML estimator, the order-$1$ QML estimator gives much better
approximation to the exact QML estimator, and has less bias and
higher efficiency; 4) with an adequate tolerance, the adaptive
order-$1$ QML estimator gives an automatic, suitable and
computational efficient approximation to the exact QML estimator;
and 5) the effectiveness of the order-$1$ QML estimator for the
identification of SDEs from a reduced number of complete
observations distant in time. Further note that new estimators can
also be easily applied to a variety of practical problems with
sequential random measurements or with multiple missing data.
\newline

\textbf{Acknowledgement. } The numerical simulations of this paper
were concluded on July 2012 within the framework of the
Associateship Scheme of the Abdus Salam International Centre for
Theoretical Physics (ICTP), Trieste, Italy. The author thanks to the
ICTP for the partial support to this work.

\newpage

\textbf{References}
\newline

\textbf{Bibby B.M. and Sorensen M.} (1996) Estimation for discrete
observed diffusion processes: A review, Theory Stoch. Processes, 2,
49-56.

\textbf{Bollerslev T. and Wooldridge J.M.} (1992) Quasi-maximun
likelihood estimation and inference in dynamic models with
time-varying covariances. Econom. Rev., 11, 143-172.

\textbf{Carbonell F., Jimenez J.C. and Biscay R.J.} (2006) Weak
local linear discretizations for stochastic differential equations:
convergence and numerical schemes, J. Comput. Appl. Math., 197,
578-596.

\textbf{Clement E.} (1995) Bias correction for the estimation of
discretized diffusion processes from an approximate likelihood
function, Theory Prob. Appl., 42, 283-288.

\textbf{Durham G.B. and Gallant A.R.} (2002) Numerical techniques
for maximum likelihood estimation of continuous-time diffusion
processes, J. Business Econom. Statist., 20, 297-316.

\textbf{Florens-Zmirou D.}(1989) Approximate discrete-time schemes
for statistics of diffusion processes, Statistics, 20, 547-557.

\textbf{Gitterman M.}{\normalsize , The noisy oscillator, World
Scientific, 2005. }

\textbf{Huang X.} (2011) Quasi-maximun likelihood estimation of
discretely observed diffusions. Econom. J., 14, 241-256.

\textbf{Hurn A.S., Jeisman I.J. and Lindsay, K.A.} (2007) Seeing the
wood for the trees: a critical evaluation of methods to estimate the
parameters of stochastic differential equations. J. Financial
Econometrics, 5, 390-455.

\textbf{Jimenez J.C.} (2012a) Simplified formulas for the mean and
variance of linear stochastic differential equations.
http://arxiv.org/abs/1207.5067. Submitted.

\textbf{Jimenez J.C.} (2012b) Approximate linear minimum variance
filters for continuous-discrete state space models: convergence and
practical algorithms. http://arxiv.org/abs/1207.6023. Submitted.

\textbf{Jimenez J.C. and Biscay R.} (2002) Approximation of
continuous time stochastic processes by the Local Linearization
method revisited. Stochast. Anal. \& Appl., 20, 105-121.

\textbf{Jimenez J.C., Biscay R. and Ozaki T.} (2006) Inference
methods for discretely observed continuous-time stochastic
volatility models: A commented overview, Asia-Pacific Financial
Markets, 12, 109-141.

\textbf{Jimenez J.C. and de la Cruz H.} (2012) Convergence rate of
strong Local Linearization schemes for stochastic differential
equations with additive noise, BIT, 52, 357-382.

\textbf{Jimenez J.C. and Ozaki T.} (2002) Linear estimation of
continuous-discrete linear state space models with multiplicative
noise, Systems \& Control Letters, 47, 91-101.

\textbf{Jimenez J.C. and Ozaki T.} (2003) Local Linearization
filters for nonlinear continuous-discrete state space models with
multiplicative noise. Int. J. Control, 76, 1159-1170.

\textbf{Jimenez J.C., Shoji I. and Ozaki T.} (1999) Simulation of
stochastic differential equations through the Local Linearization
method. A comparative study, J. Statist. Physics, 94, 587-602.

\textbf{Kessler M.} \textbf{\ }(1997). Estimation of an ergodic
diffusion from discrete observations, Scand. J. Statist., 24,
211-229.

\textbf{Kloeden P.E. and Platen E}. (1999) Numerical Solution of
Stochastic Differential Equations, Springer-Verlag, Berlin, Third
Edition.

\textbf{Ljung L. and Caines P.E.} (1979) Asymptotic normality of
prediction error estimators for approximate system models,
Stochastics, 3, 29-46.

\textbf{Moler C. and Van Loan C.} (2003) Nineteen dubious ways to
compute the exponential of a matrix, SIAM Review, 45, 3-49.

\textbf{Nielsen J.N., Madsen H. and Young, P. C.} (2000) Parameter
estimation in stochastic differential equations: an overview. Annual
Review of Control, 24, 83-94.

\textbf{Oppenheim A.V. and Schafer R.W.} (2010) Discrete-Time Signal
Processing, Prentice Hall, Third Edition.

\textbf{Ozaki T.} (1985) Statistical identification of storage model
with application to stochastic hydrology, Water Resourse Bulletin,
21, 663-675.

\textbf{Ozaki T.} (1992) A bridge between nonlinear time series
models and nonlinear stochastic dynamical systems: a local
linearization approach, Statistica Sinica, 2, 113-135.

\textbf{Prakasa-Rao B.L.S.} (1983) Asymptotic theory for nonlinear
least squares estimator for diffusion processes, Mathematische
Operationsforschang Statistik Serie Statistik, 14, 195-209.

\textbf{Prakasa-Rao B.L.S.} (1999) Statistical inference for
diffusion type processes. Oxford University Press.

\textbf{Shoji I. and Ozaki T.} (1997) Comparative study of
estimation methods for continuous time stochastic processes, J. Time
Ser. Anal., 18, 485--506.

\textbf{Shoji I. and Ozaki T}. (1998) A statistical method of
estimation and simulation for systems of stochastic differential
equations, Biometrika, 85, 240--243.

\textbf{Singer H}. (2002) Parameter estimation of nonlinear
stochastic differential equations: Simulated maximum likelihood
versus extended Kalman filter and Ito-Taylor expansion, J. Comput.
Graphical Statist., 11, 972-995.

\textbf{Wooldridge J.M.\ }(1994) Estimation and inference for
dependent processes. In Engle R.F. and McFadden D. (Eds.) Handbook
of Econometrics, 4, 2639-2738.

\textbf{Yoshida N.} (1992) Estimation for diffusion processes from
discrete observation. J. Multivariate Anal., 41, 220--242.

\newpage

\begin{figure}
\centering
 $\begin{array}{c}
  \includegraphics[width=5in]{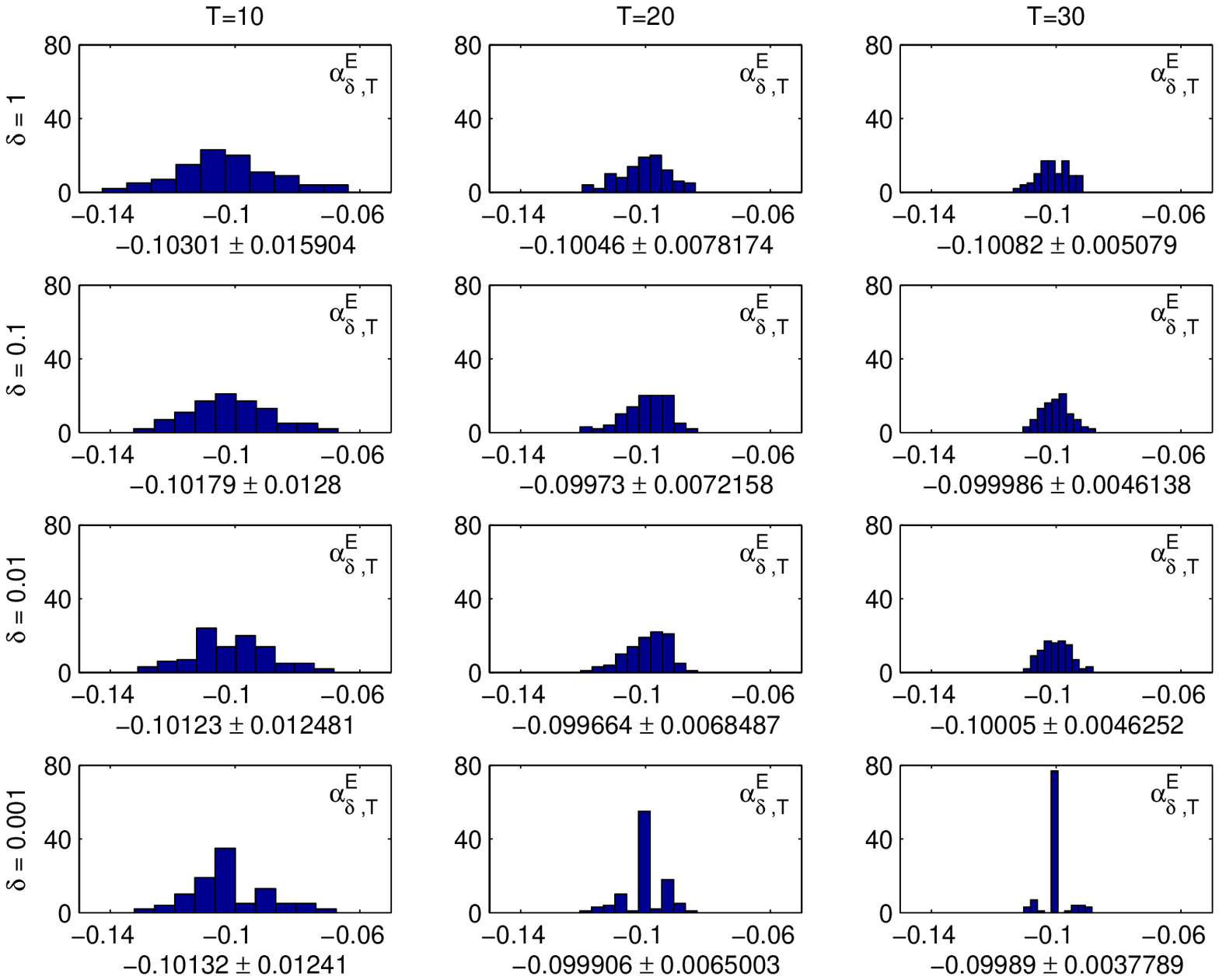} \\
  \includegraphics[width=5in]{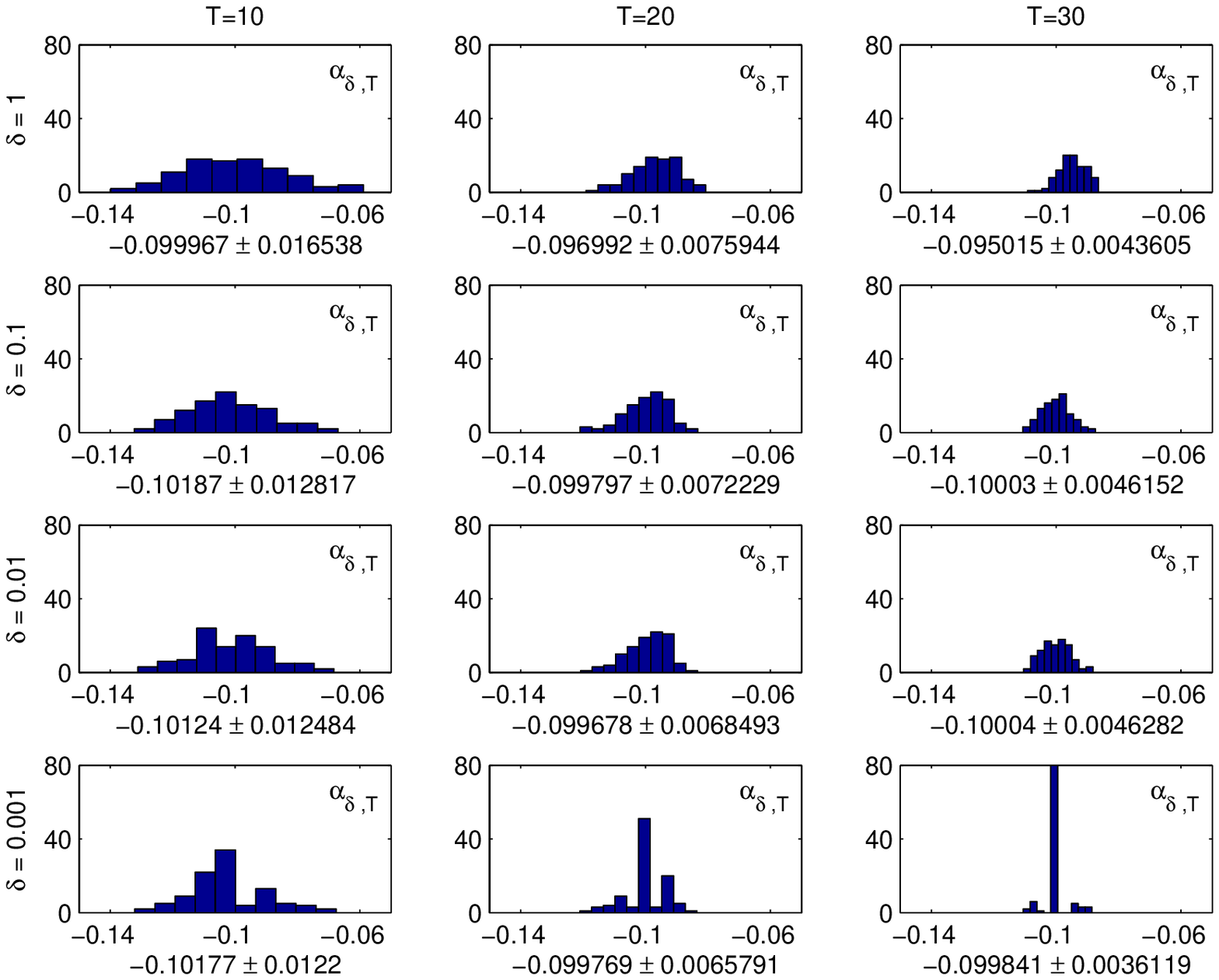}
\end{array}$
\caption{Histograms and confidence limits for the exact ($\widehat{\protect\alpha }_{\protect\delta %
,T}^{E}$) and the conventional ($\widehat{\protect\alpha }_{\protect\delta %
,T}$) QML estimators of $\protect\alpha $ computed from the Example
1 data with sampling period $\protect\delta $ and time interval of
length $T$.}
\end{figure}

\begin{figure}
\centering
 $\begin{array}{c}
  \includegraphics[width=5in]{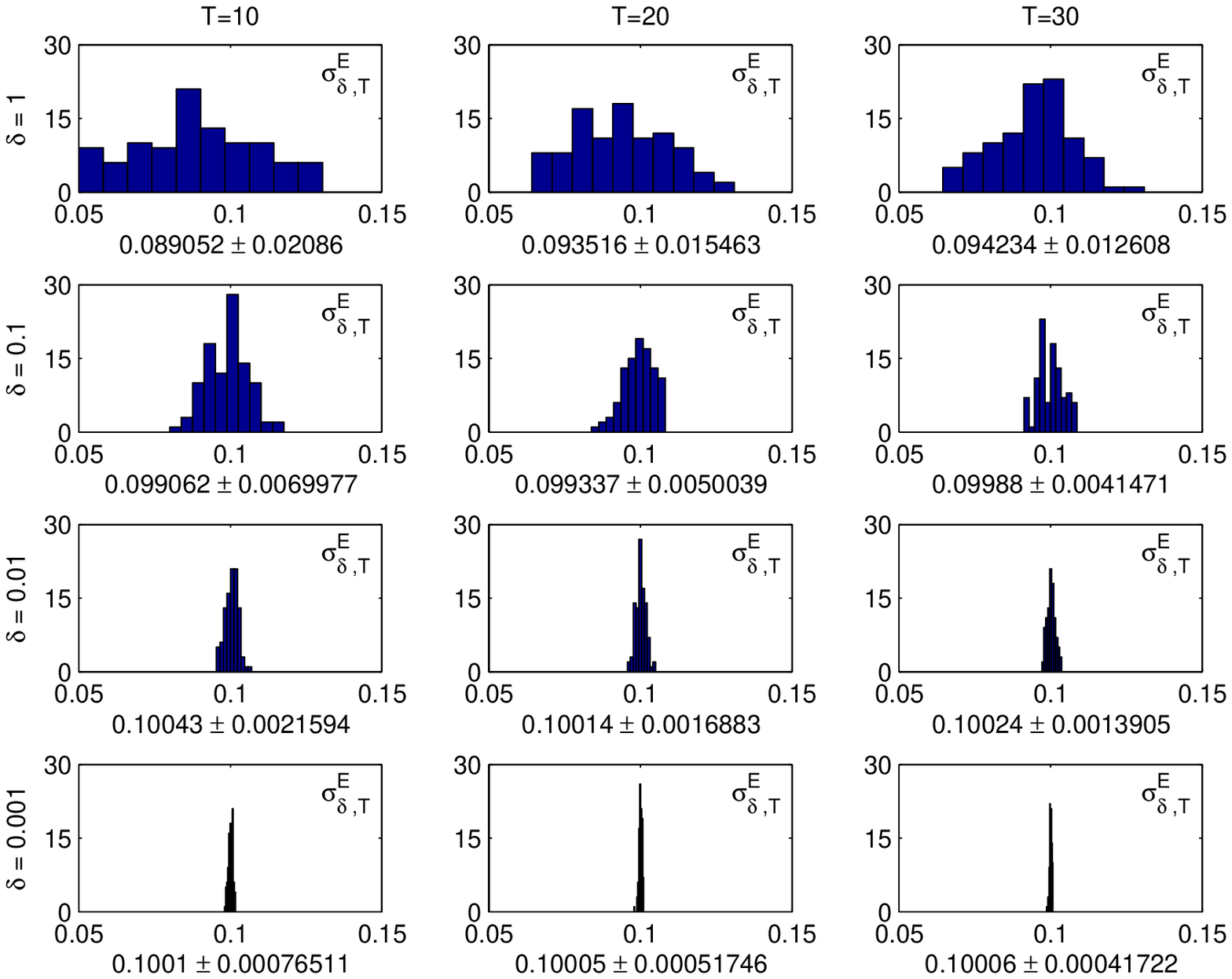} \\
  \includegraphics[width=5in]{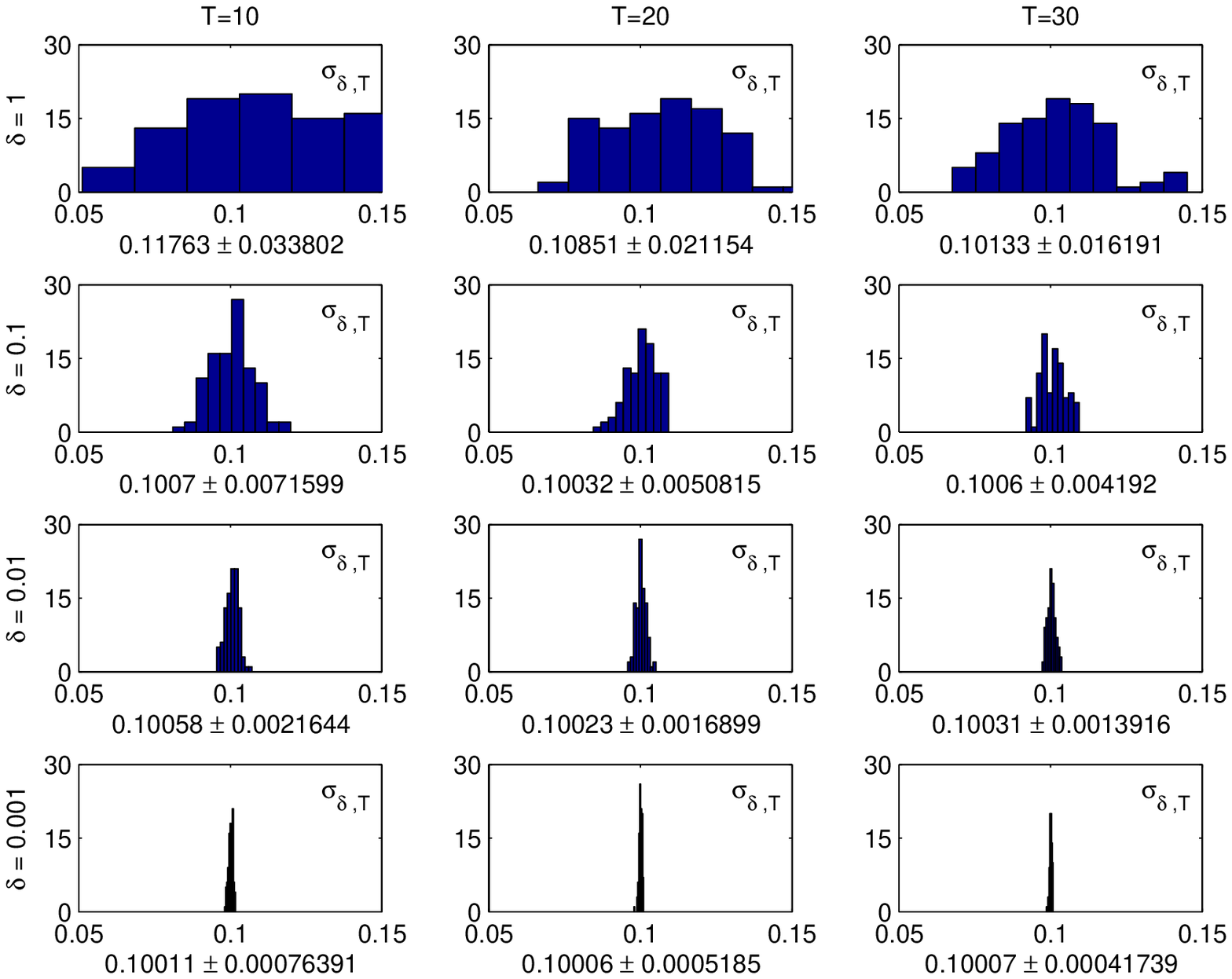}
\end{array}$
\caption{Histograms and confidence limits for the exact ($\widehat{\protect\sigma }_{\protect\delta %
,T}^{E}$) and the conventional ($\widehat{\protect\sigma }_{\protect\delta %
,T}$) QML estimators of $\protect\sigma $ computed from the Example
1 data with sampling period $\protect\delta $ and time interval of
length $T$.}
\end{figure}

\begin{figure}
\centering
 $\begin{array}{c}
  \includegraphics[width=5in]{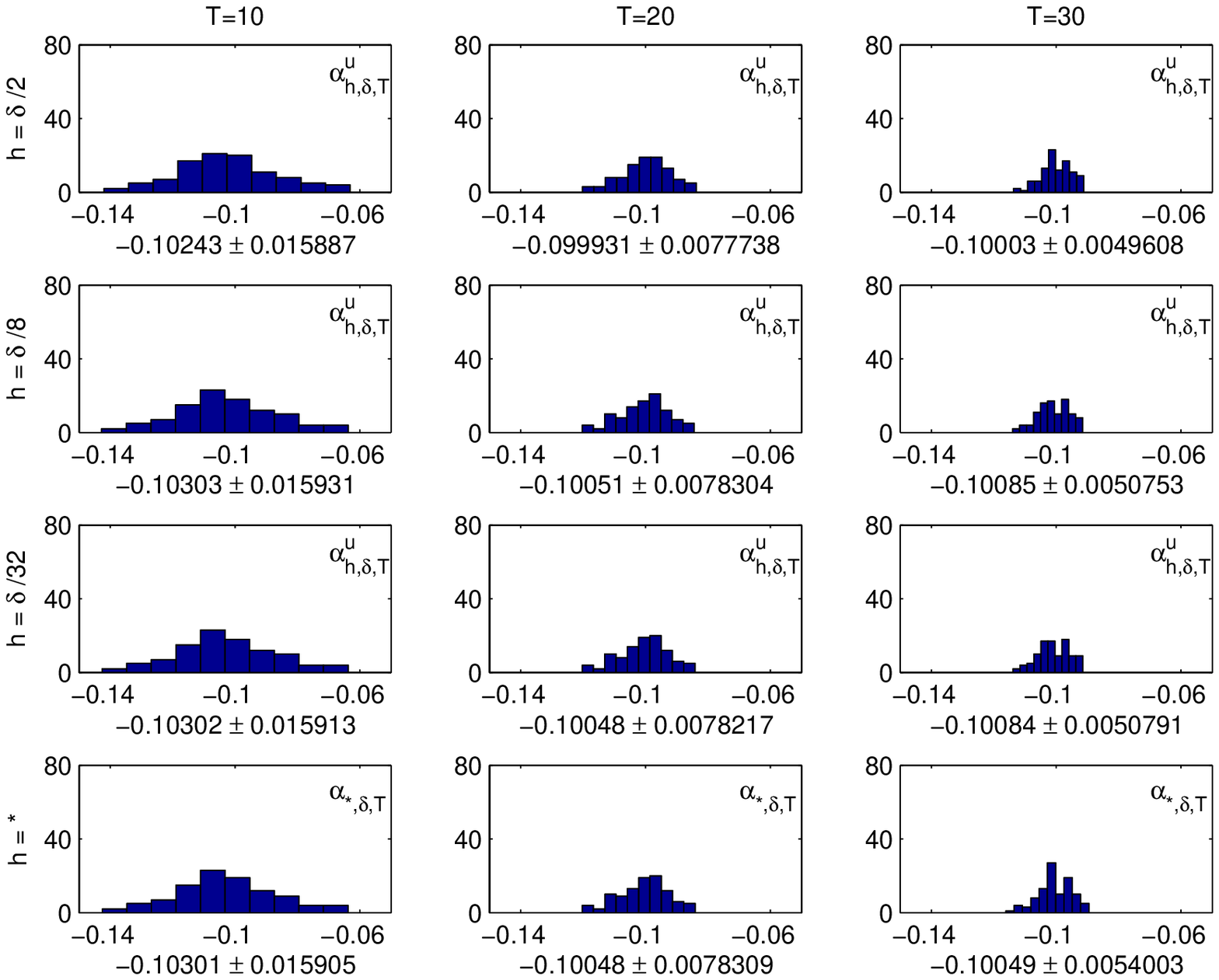} \\
  \includegraphics[width=5in]{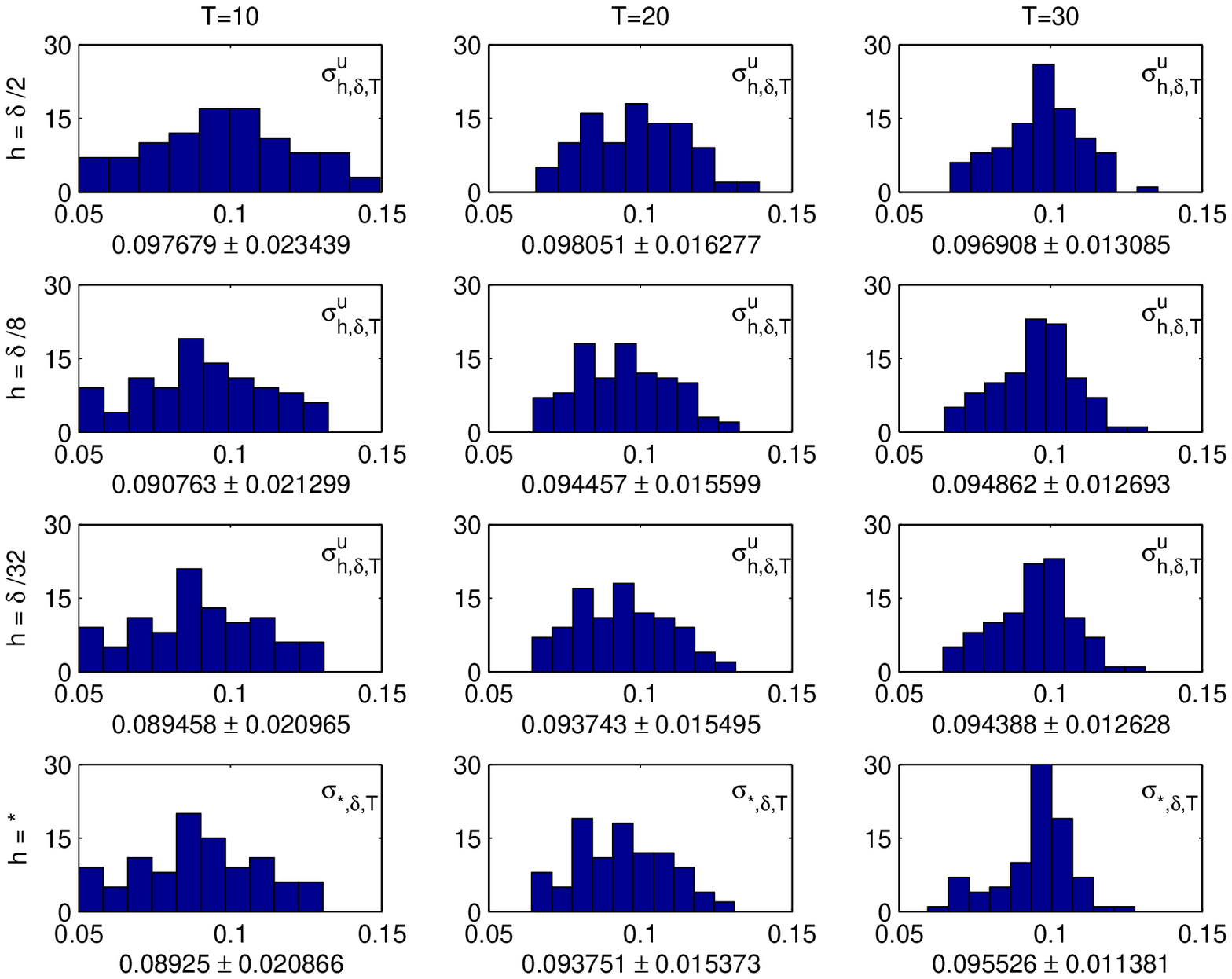}
\end{array}$
\caption{Histograms and confidence limits for the oder-1 QML estimators of $\protect\alpha $ and $%
\protect\sigma $\ computed on uniform $\left( \protect\tau \right)
_{h,T}^{u}$ and adaptive $\left( \protect\tau \right) _{\cdot ,T}$
time discretizations from the Example 1 data with sampling period
$\protect\delta =1$ and time interval of length $T$.}
\end{figure}

\begin{figure}
\centering
 $\begin{array}{c}
  \includegraphics[width=5in]{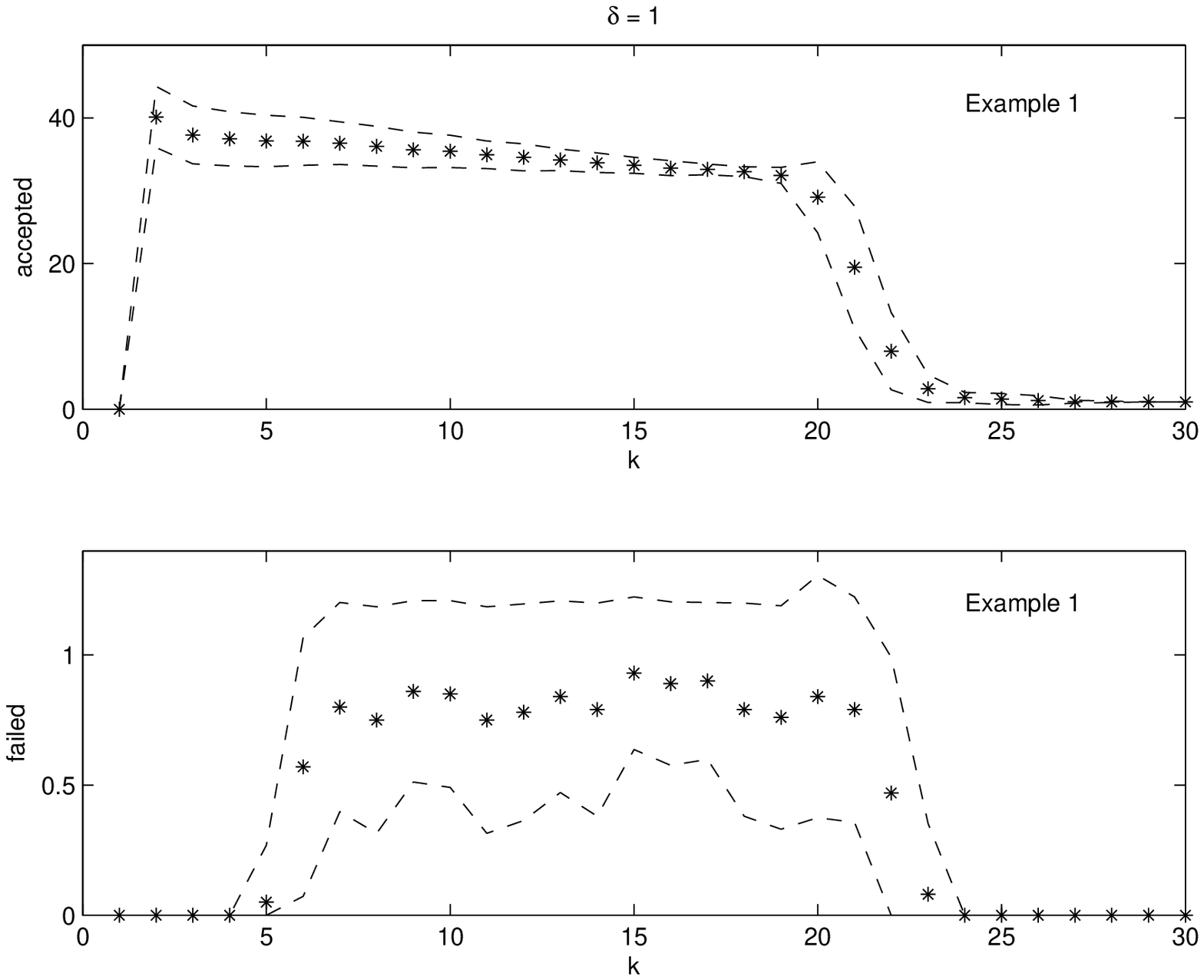} \\
  \includegraphics[width=5in]{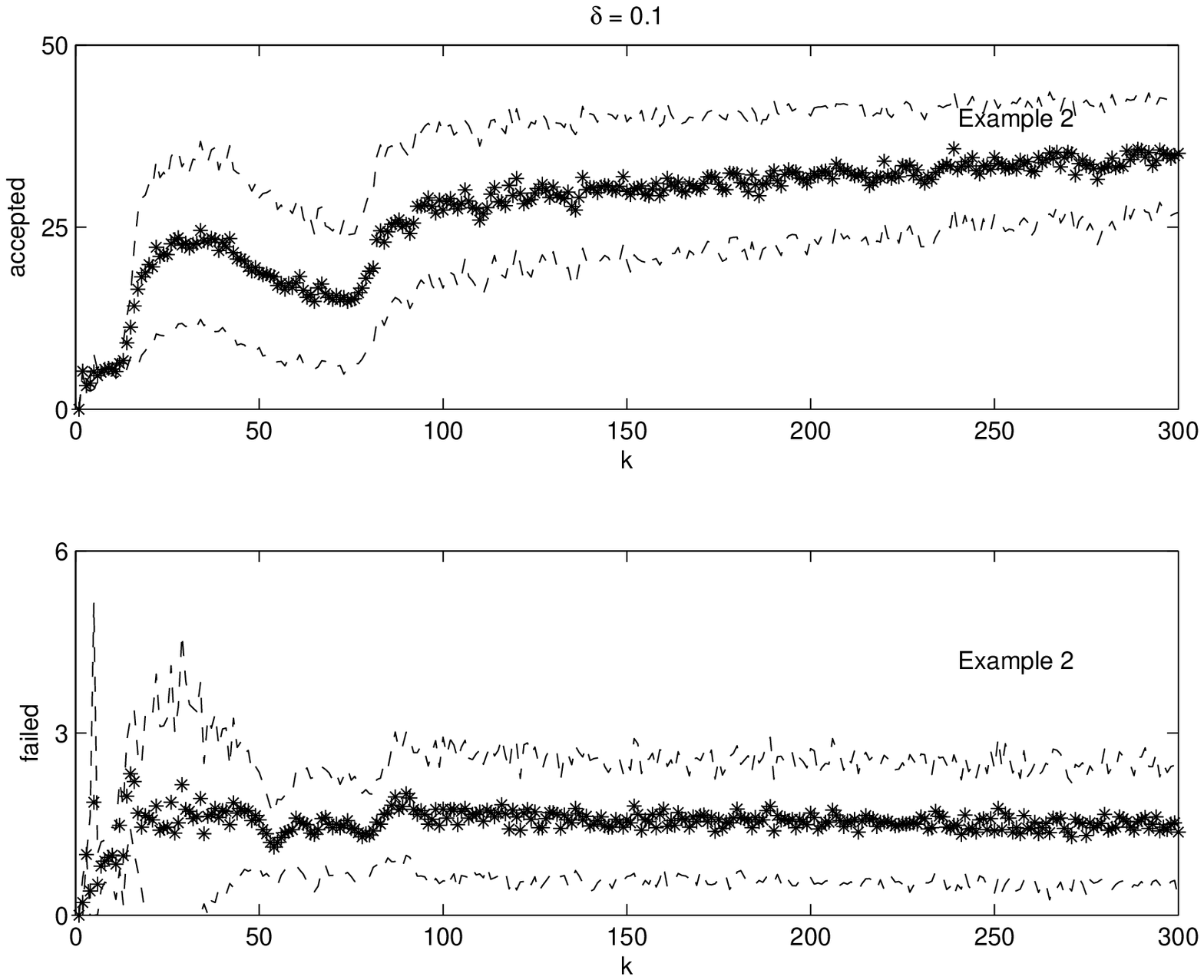}
\end{array}$
\caption{Average (*) and 90\% confidence limits (-) of accepted and
failed steps of the adaptive QML estimator at each $t_{k}\in
\{t\}_{N}$ in the Examples 1 and 2.}
\end{figure}

\begin{figure}
\centering
 $\begin{array}{c}
  \includegraphics[width=5in]{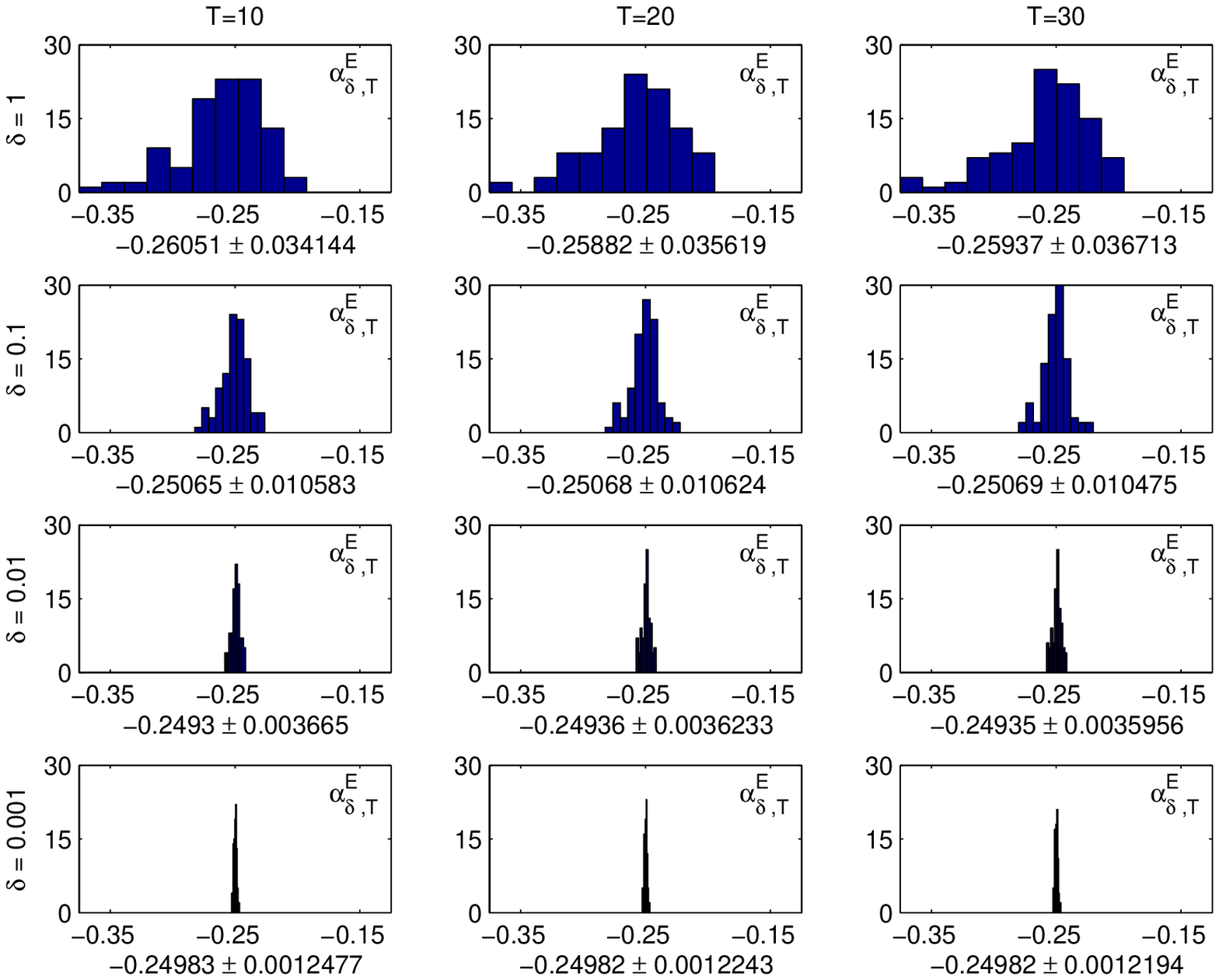} \\
  \includegraphics[width=5in]{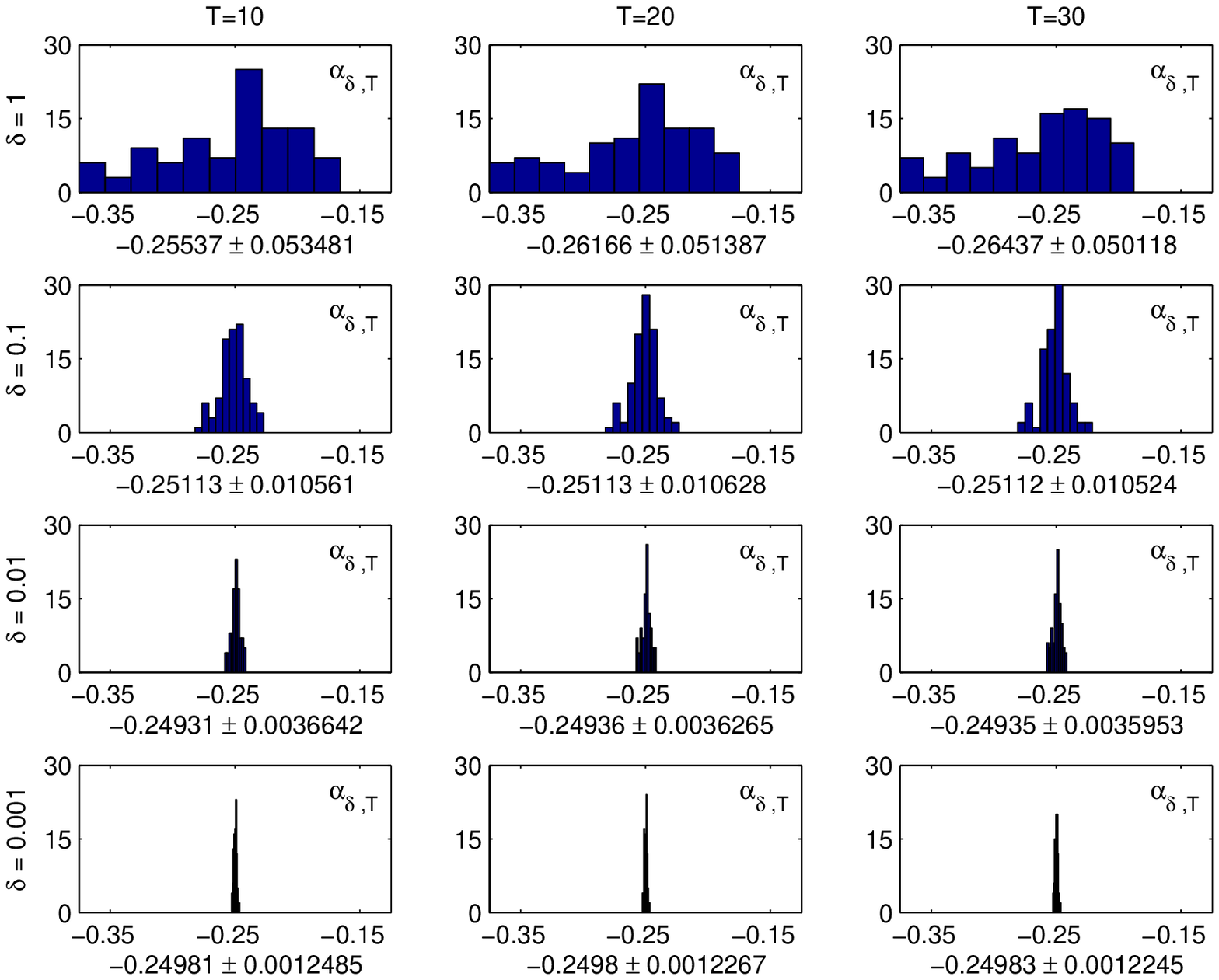}
\end{array}$
\caption{Histograms and confidence limits for the exact ($\widehat{\protect\alpha }_{\protect\delta %
,T}^{E}$) and the conventional ($\widehat{\protect\alpha }_{\protect\delta %
,T}$) QML estimators of $\protect\alpha $ computed from the Example
2 data with sampling period $\protect\delta $ and time interval of
length $T$.}
\end{figure}

\begin{figure}
\centering
 $\begin{array}{c}
  \includegraphics[width=5in]{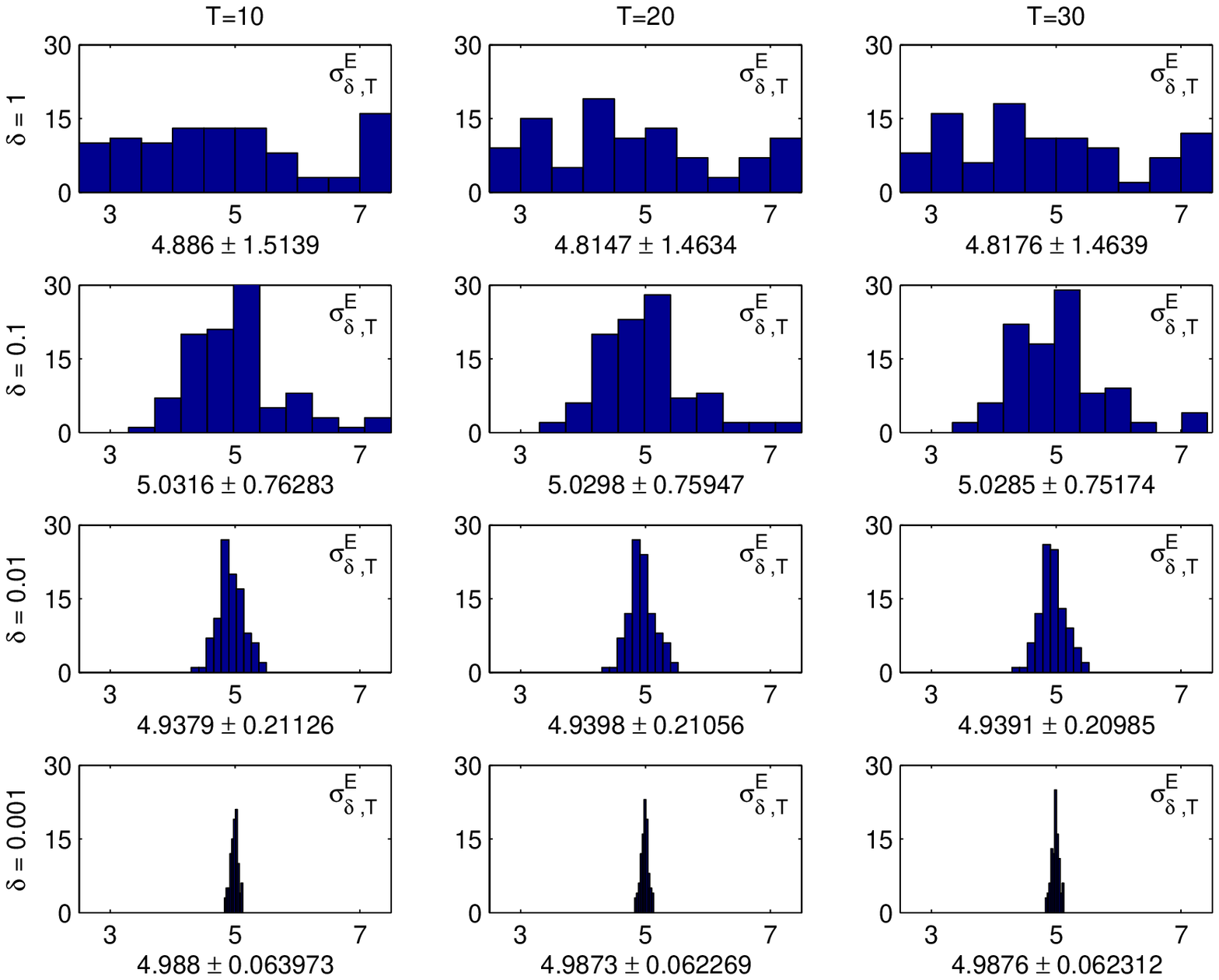} \\
  \includegraphics[width=5in]{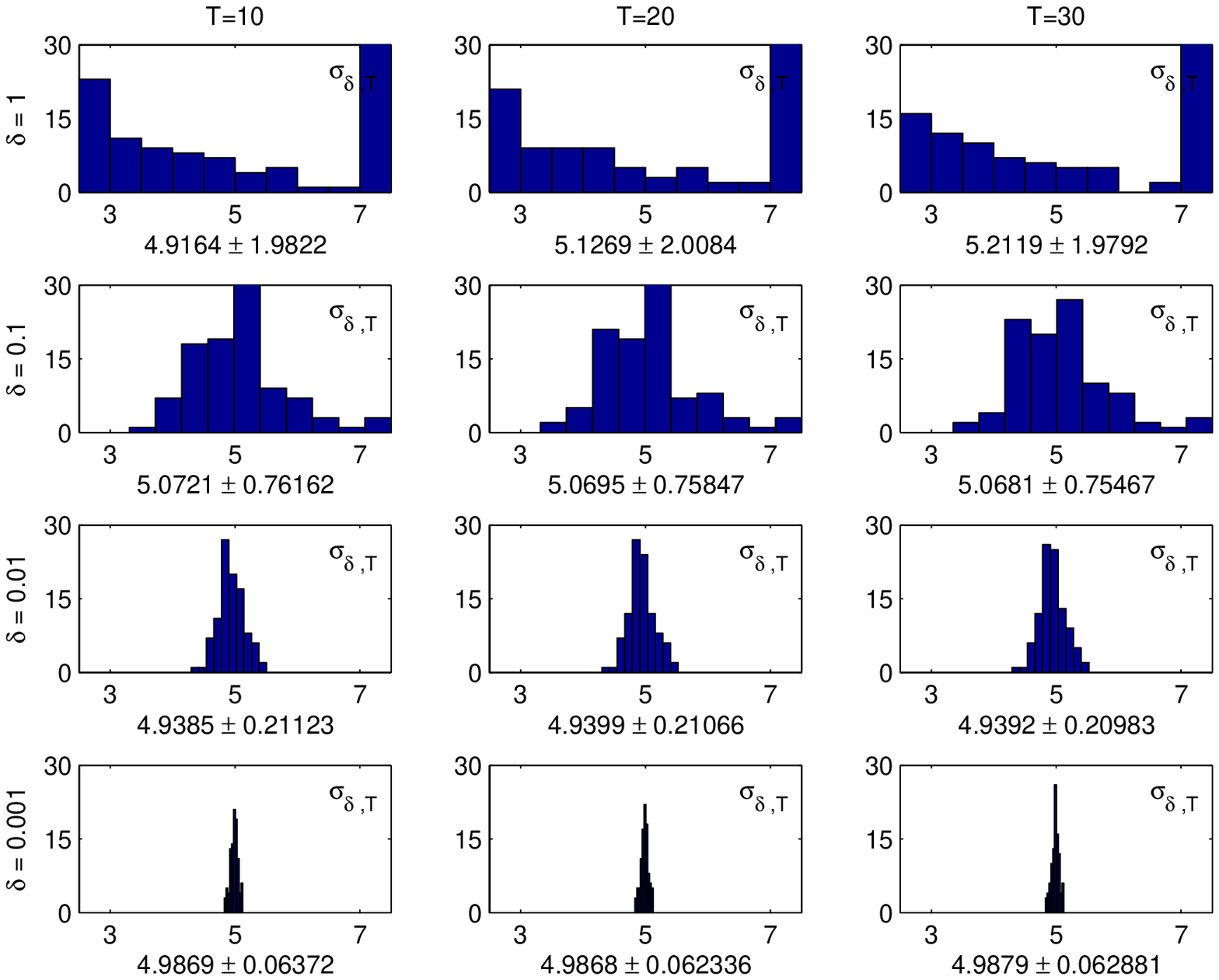}
\end{array}$
\caption{Histograms and confidence limits for the exact ($\widehat{\protect\sigma }_{\protect\delta %
,T}^{E}$) and the conventional ($\widehat{\protect\sigma }_{\protect\delta %
,T}$) QML estimators of $\protect\sigma $ computed from the Example
2 data with sampling period $\protect\delta $ and time interval of
length $T$.}
\end{figure}

\begin{figure}
\centering
 $\begin{array}{c}
  \includegraphics[width=5in]{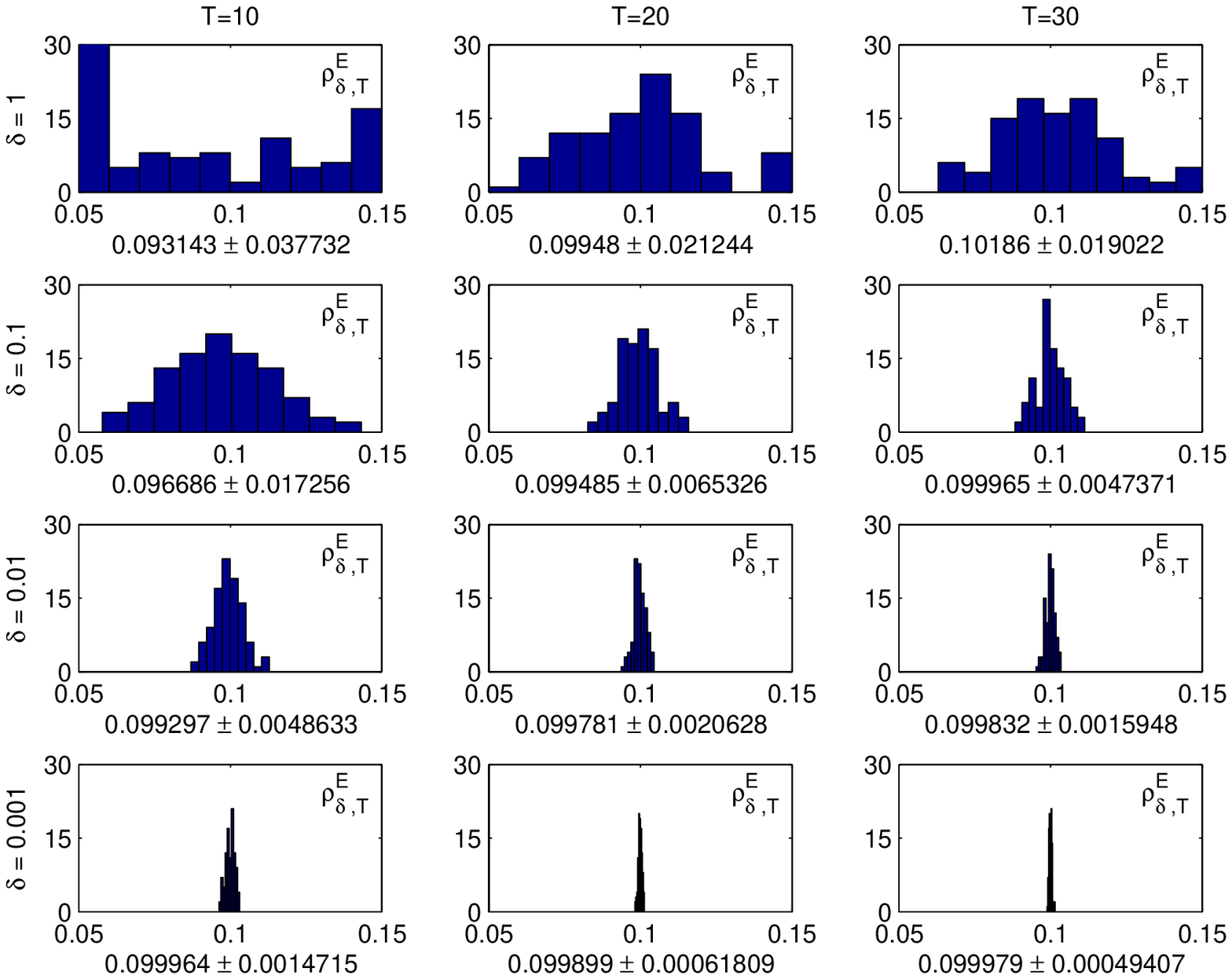} \\
  \includegraphics[width=5in]{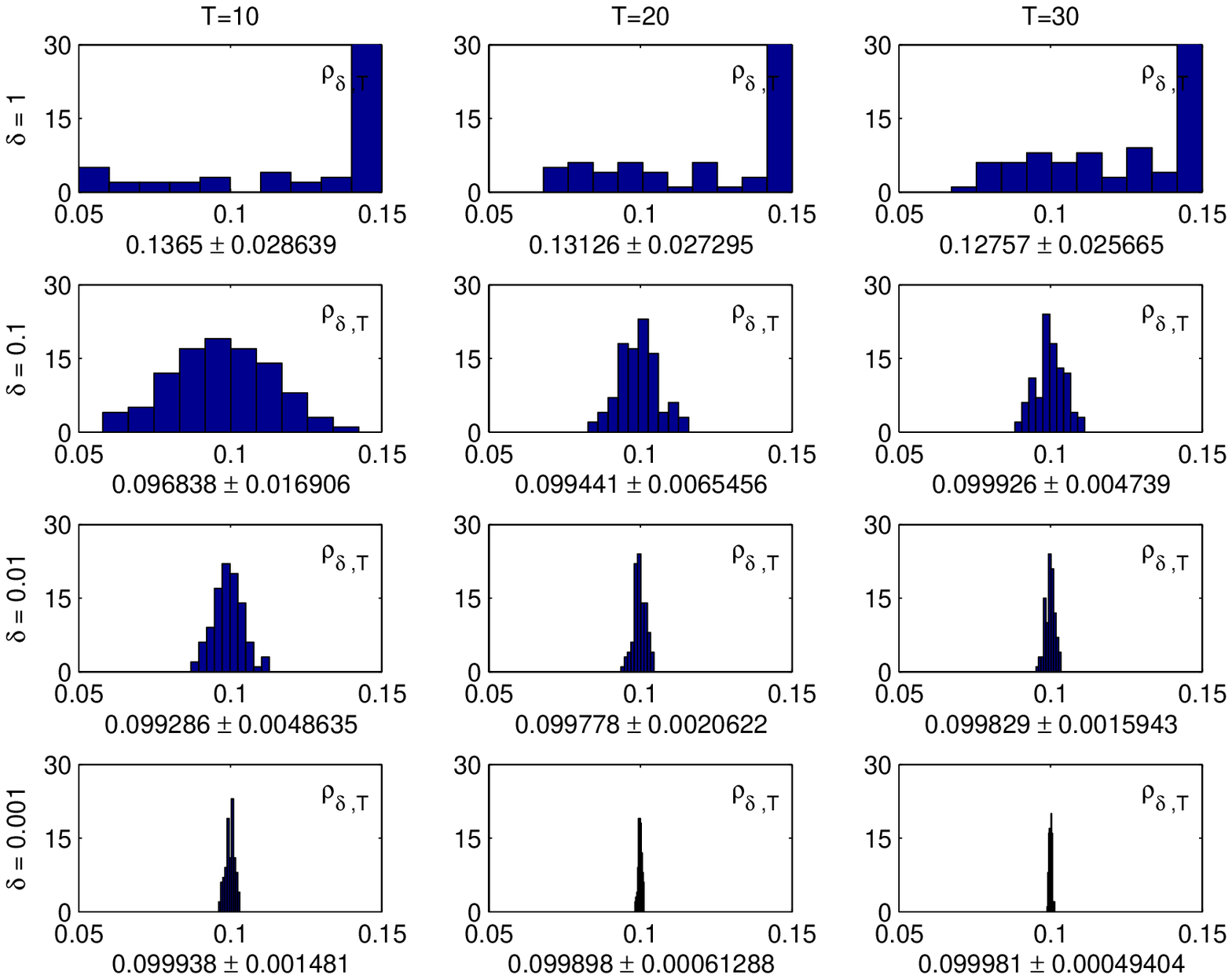}
\end{array}$
\caption{Histograms and confidence limits for the exact ($\widehat{\protect\rho }_{\protect\delta %
,T}^{E}$) and the conventional ($\widehat{\protect\rho }_{\protect\delta ,T}$%
) QML estimators of $\protect\rho $ computed from the Example 2 data
with sampling period $\protect\delta $ and time interval of length
$T$.}
\end{figure}

\subfiguresbegin
\begin{figure}
\centering
 $\begin{array}{c}
  \includegraphics[width=5in]{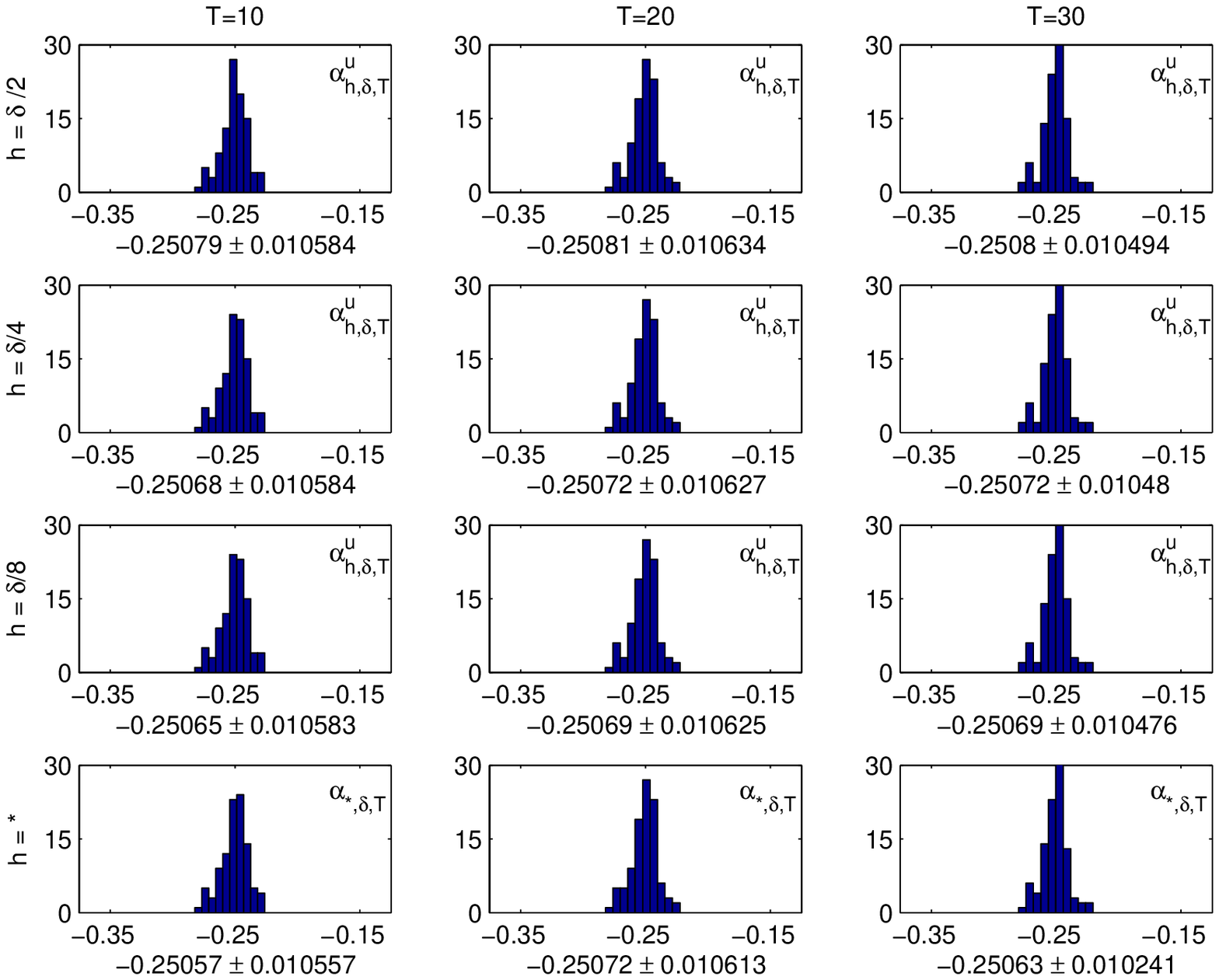} \\
  \includegraphics[width=5in]{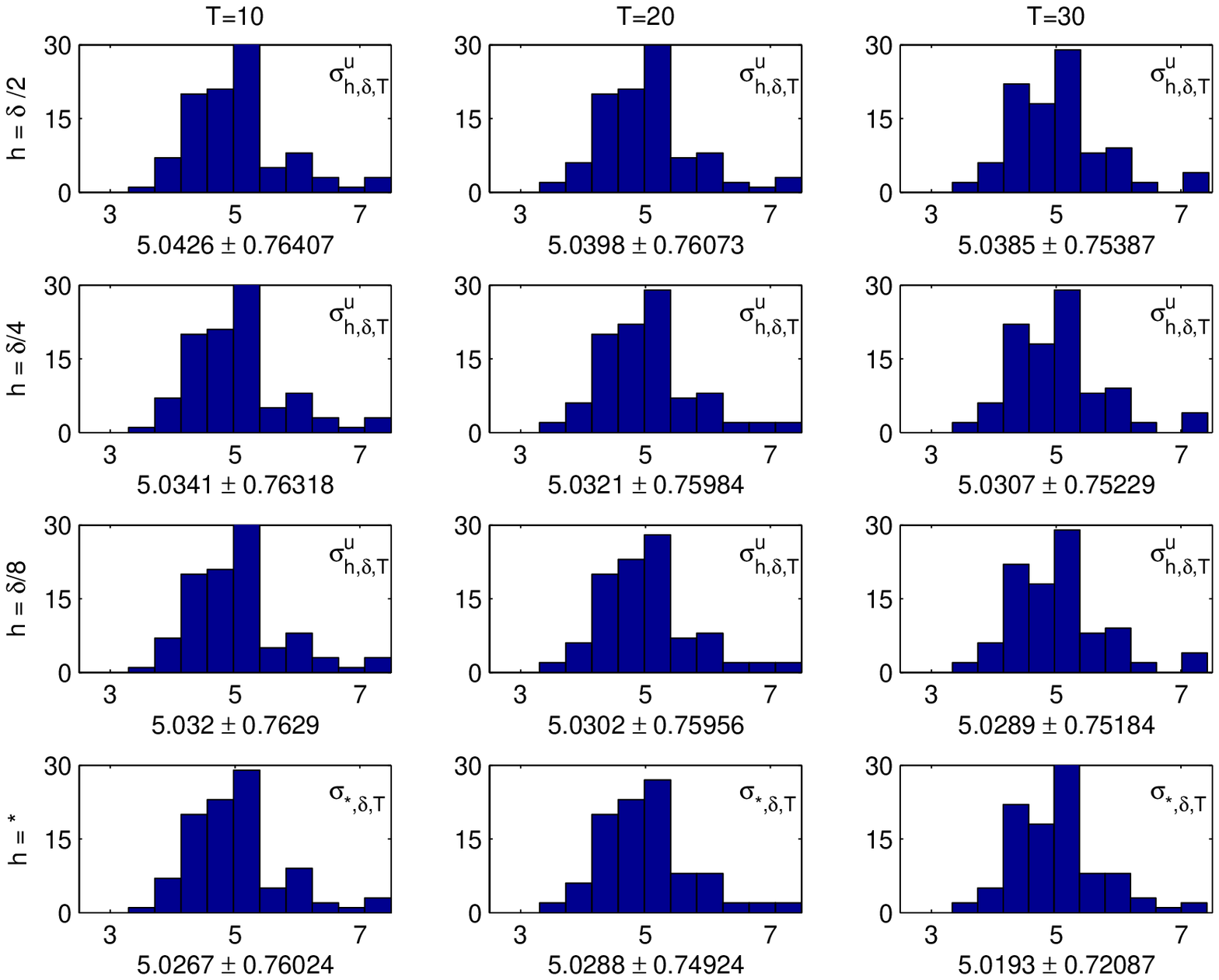}
\end{array}$
\caption{Histograms and confidence limits for the oder-1 QML estimators of $\protect\alpha $ and $%
\protect\sigma $\ computed on uniform $\left( \protect\tau \right)
_{h,T}^{u}$ and adaptive $\left( \protect\tau \right) _{\cdot ,T}$
time discretizations from the Example 2 data with sampling period
$\protect\delta =0.1$ and time interval of length $T$.}
\end{figure}

\begin{figure}
\centering
\includegraphics[width=5in]{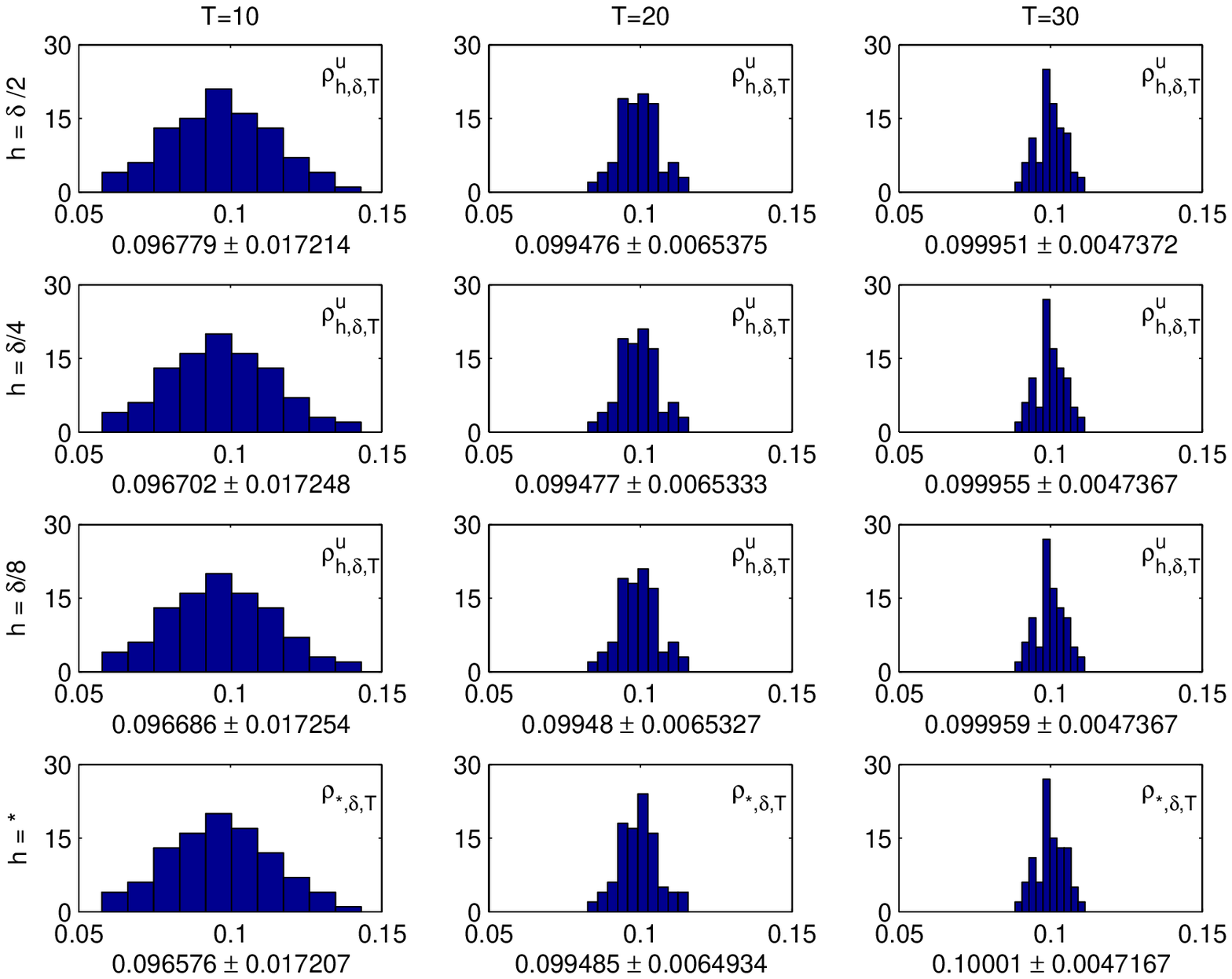}
\caption{Histograms and confidence limits for the oder-1 QML
estimators of $\protect\rho $\ computed on uniform $\left( \protect\tau \right) _{h,T}^{u}$\ and adaptive $%
\left( \protect\tau \right) _{\cdot ,T}$ time discretizations from
the Example 2 data with sampling period $\protect\delta =0.1$ and
time interval of length $T$.}
\end{figure}
\subfiguresend

\begin{figure}
\centering
 $\begin{array}{c}
  \includegraphics[width=5in]{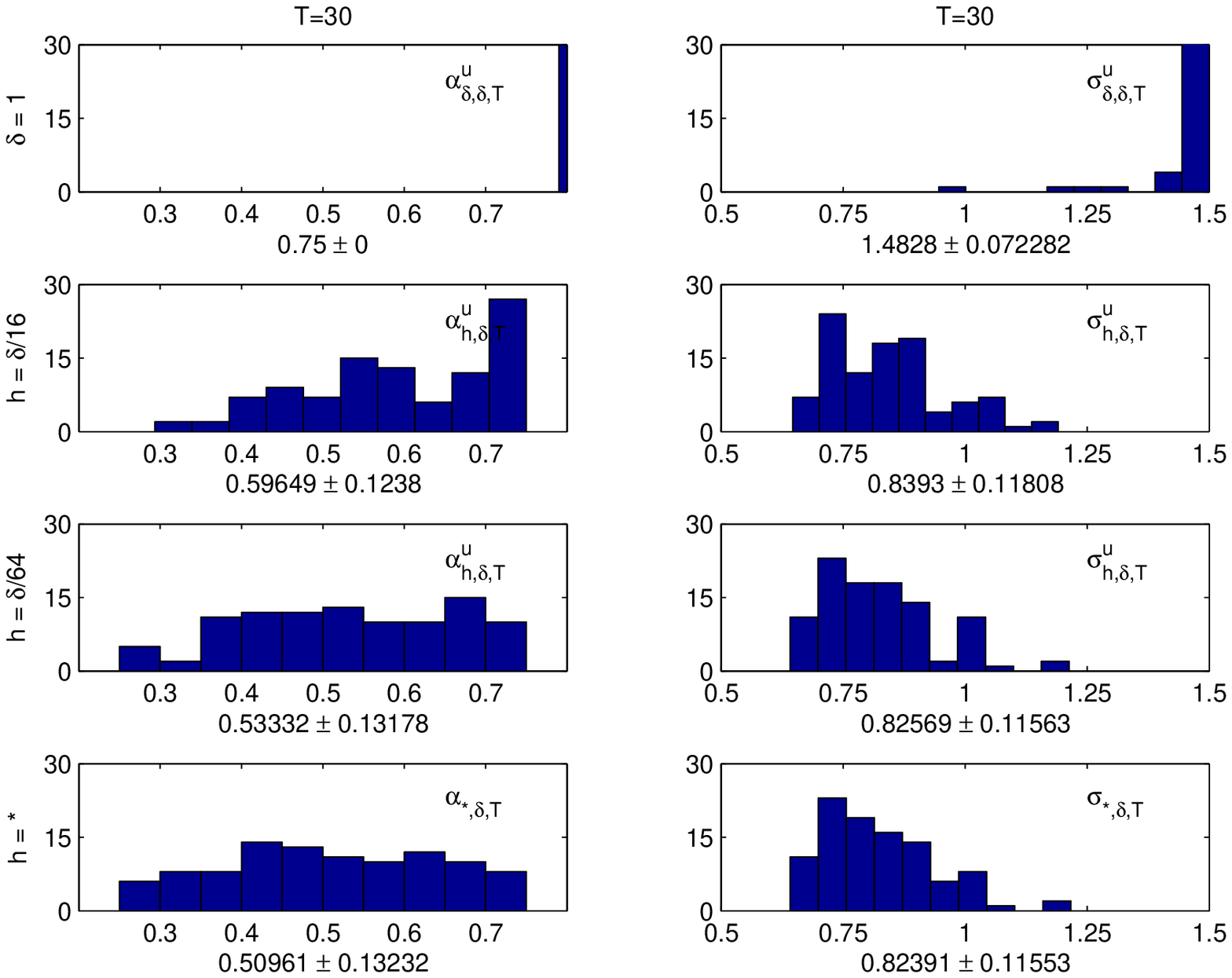} \\
  \includegraphics[width=5in]{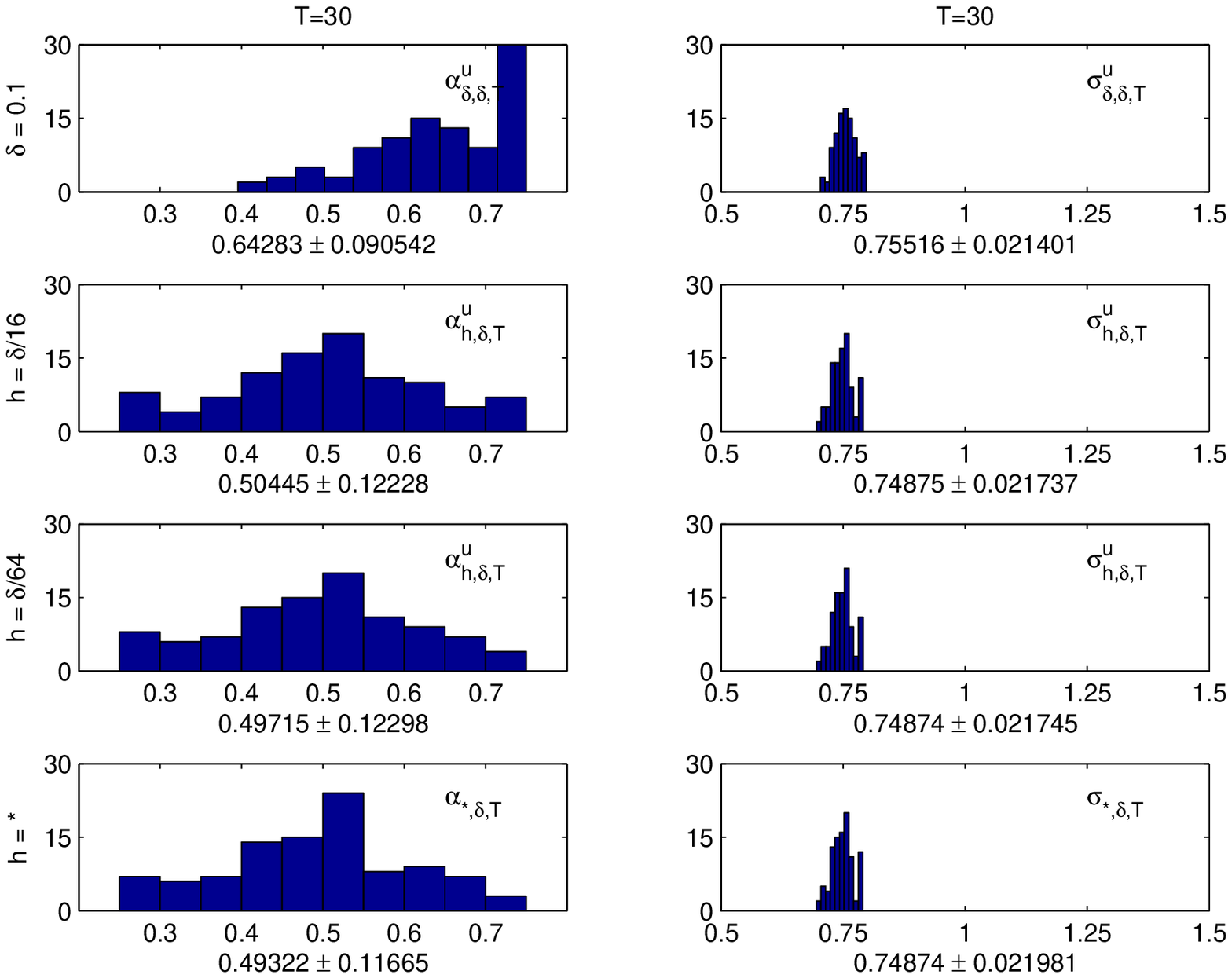}
\end{array}$
\caption{Histograms and confidence limits for the oder-1 QML estimators of $\protect\alpha $ and $%
\protect\sigma $\ computed on uniform $\left( \protect\tau \right)
_{h,T}^{u}$ and adaptive $\left( \protect\tau \right) _{\cdot ,T}$
time discretizations from the Example 3 data with sampling period
$\protect\delta$ and time interval of length $T=30$.}
\end{figure}

\begin{figure}
\centering
 $\begin{array}{c}
  \includegraphics[width=5in]{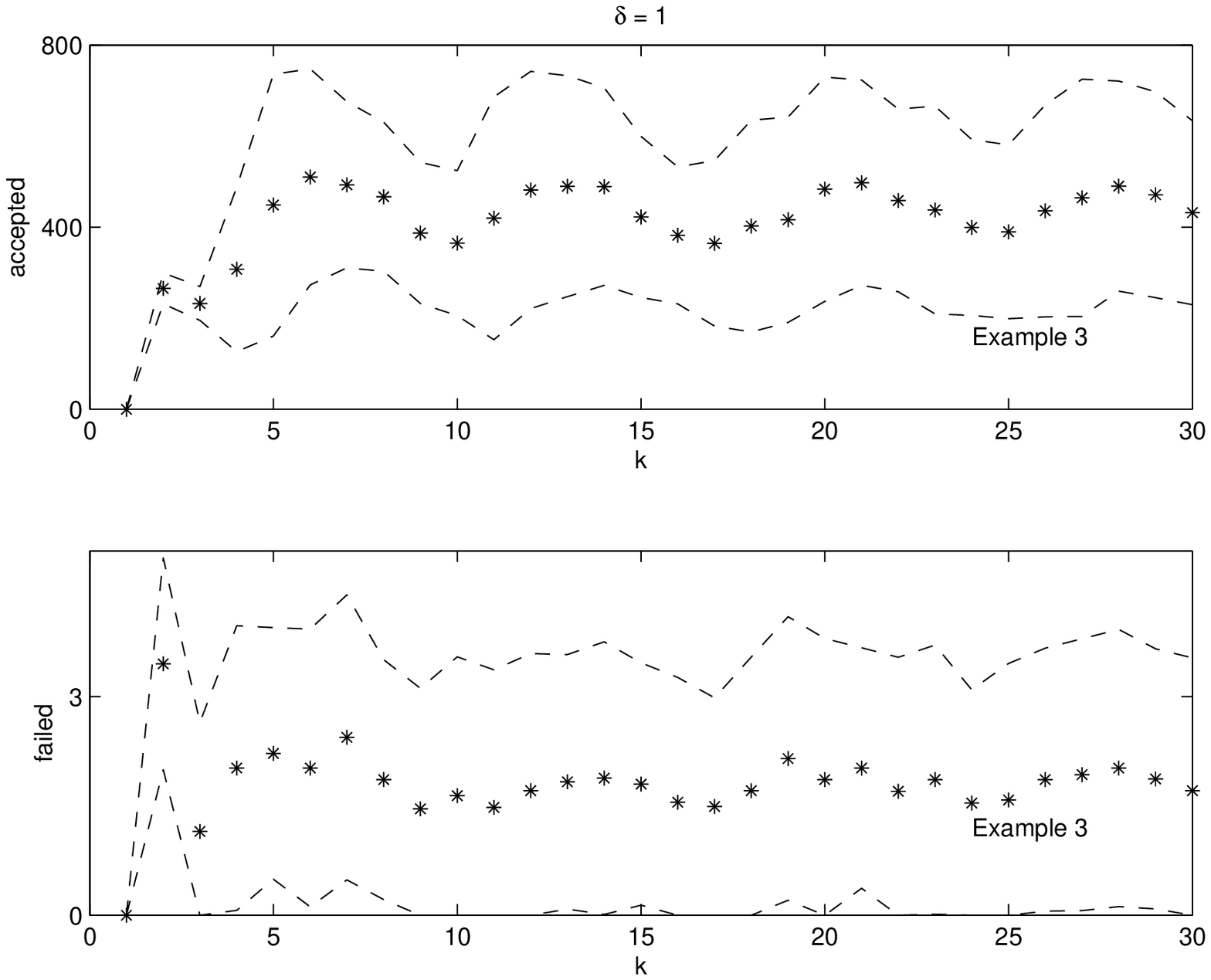} \\
  \includegraphics[width=5in]{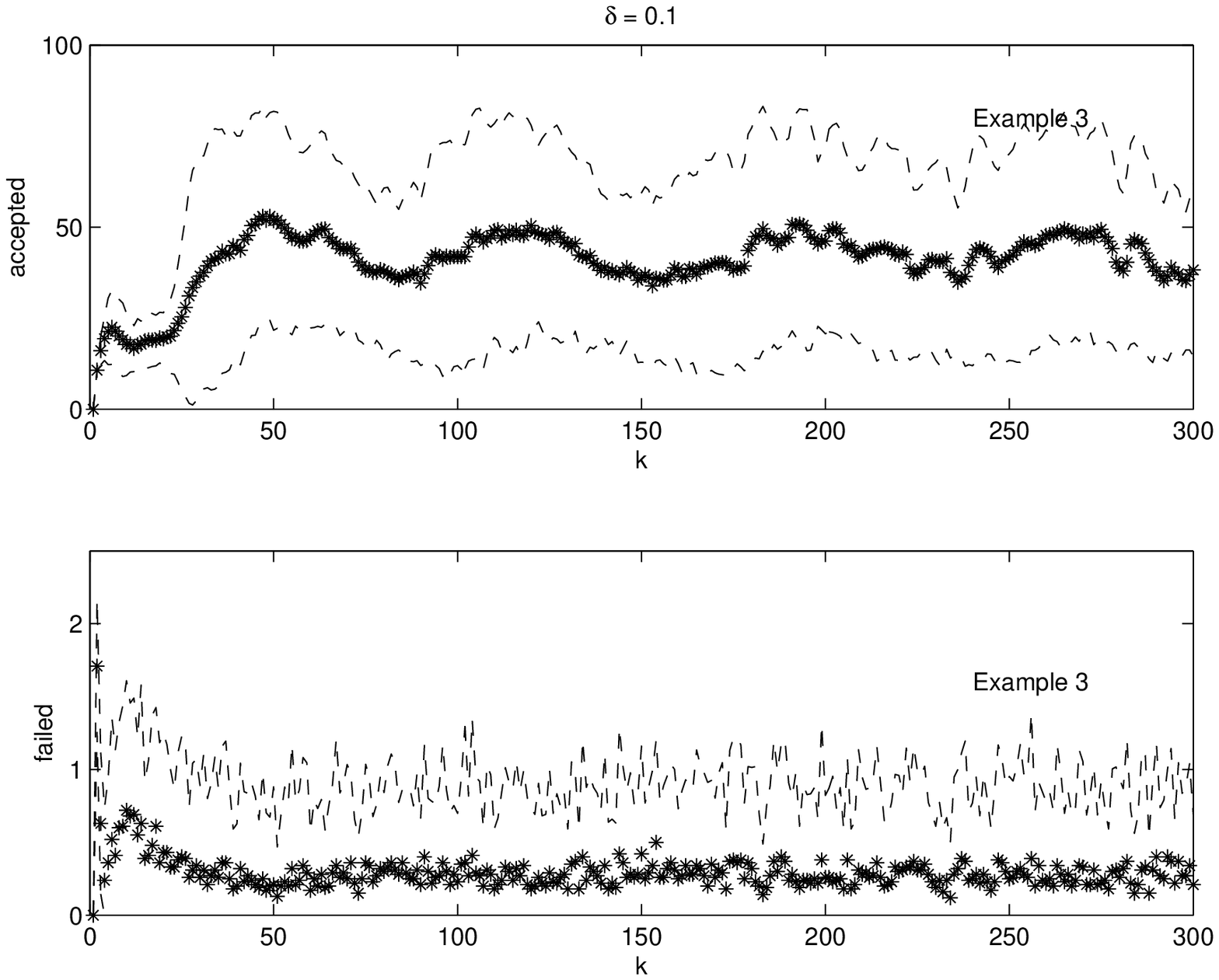}
\end{array}$
\caption{Average (*) and 90\% confidence limits (-) of accepted and
failed steps of the adaptive QML estimator at each $t_{k}\in
\{t\}_{N}$ in the Example 3.}
\end{figure}

\begin{figure}
\centering
 $\begin{array}{c}
  \includegraphics[width=5in]{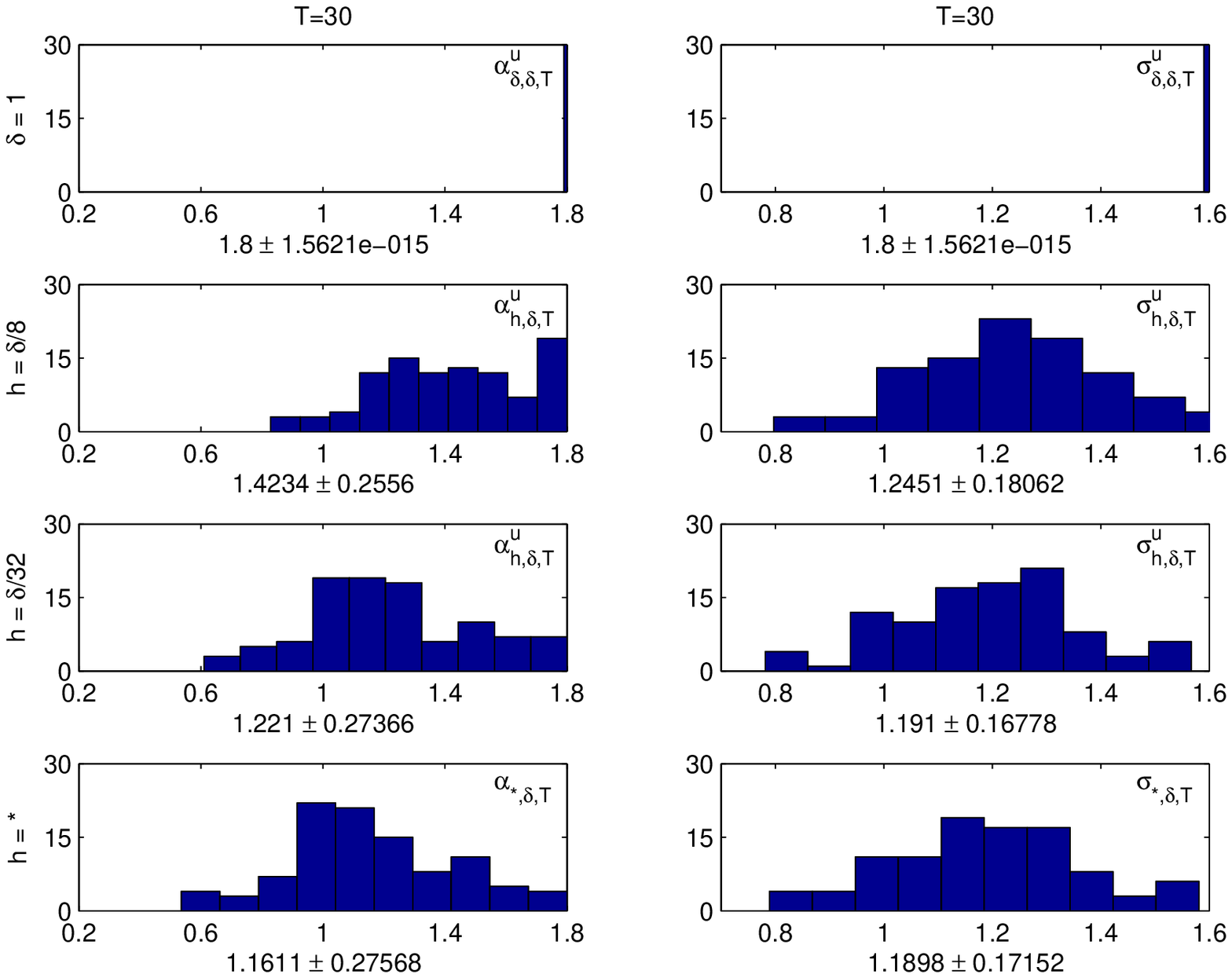} \\
  \includegraphics[width=5in]{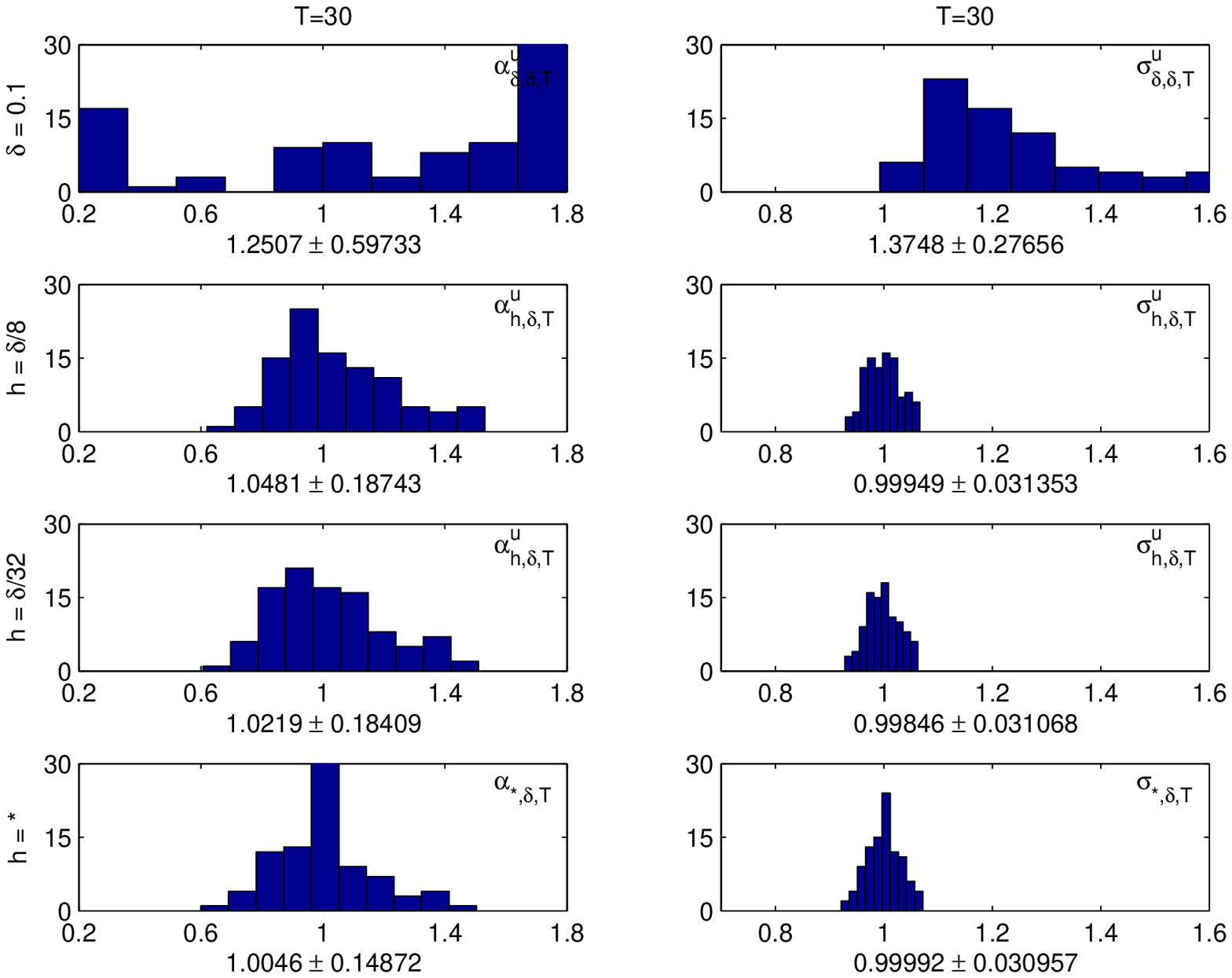}
\end{array}$
\caption{Histograms and confidence limits for the oder-1 QML estimators of $\protect\alpha $ and $%
\protect\sigma $\ computed on uniform $\left( \protect\tau \right)
_{h,T}^{u}$ and adaptive $\left( \protect\tau \right) _{\cdot ,T}$
time discretizations from the Example 4 data with sampling period
$\protect\delta$ and time interval of length $T=30$.}
\end{figure}

\begin{figure}
\centering
 $\begin{array}{c}
  \includegraphics[width=5in]{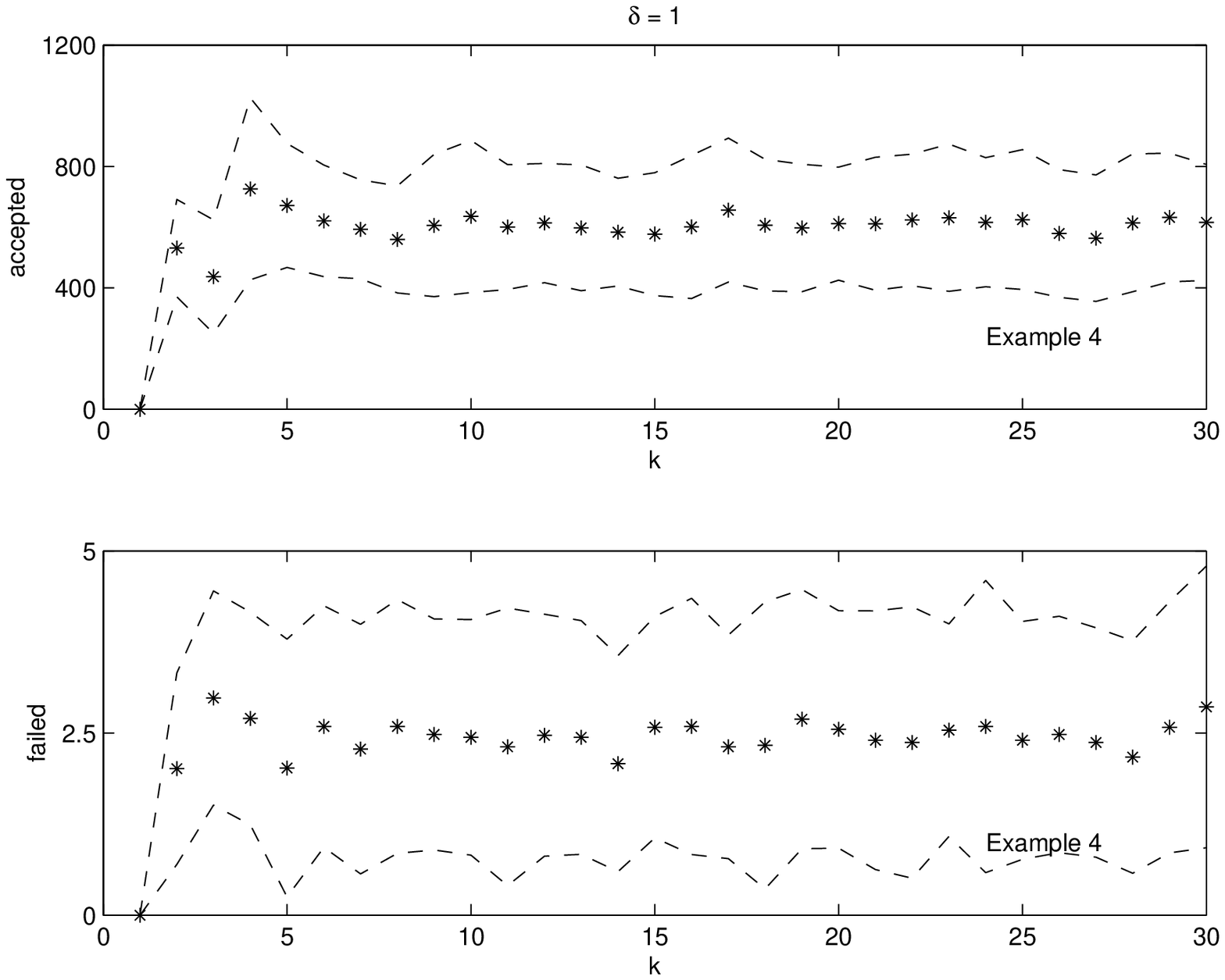} \\
  \includegraphics[width=5in]{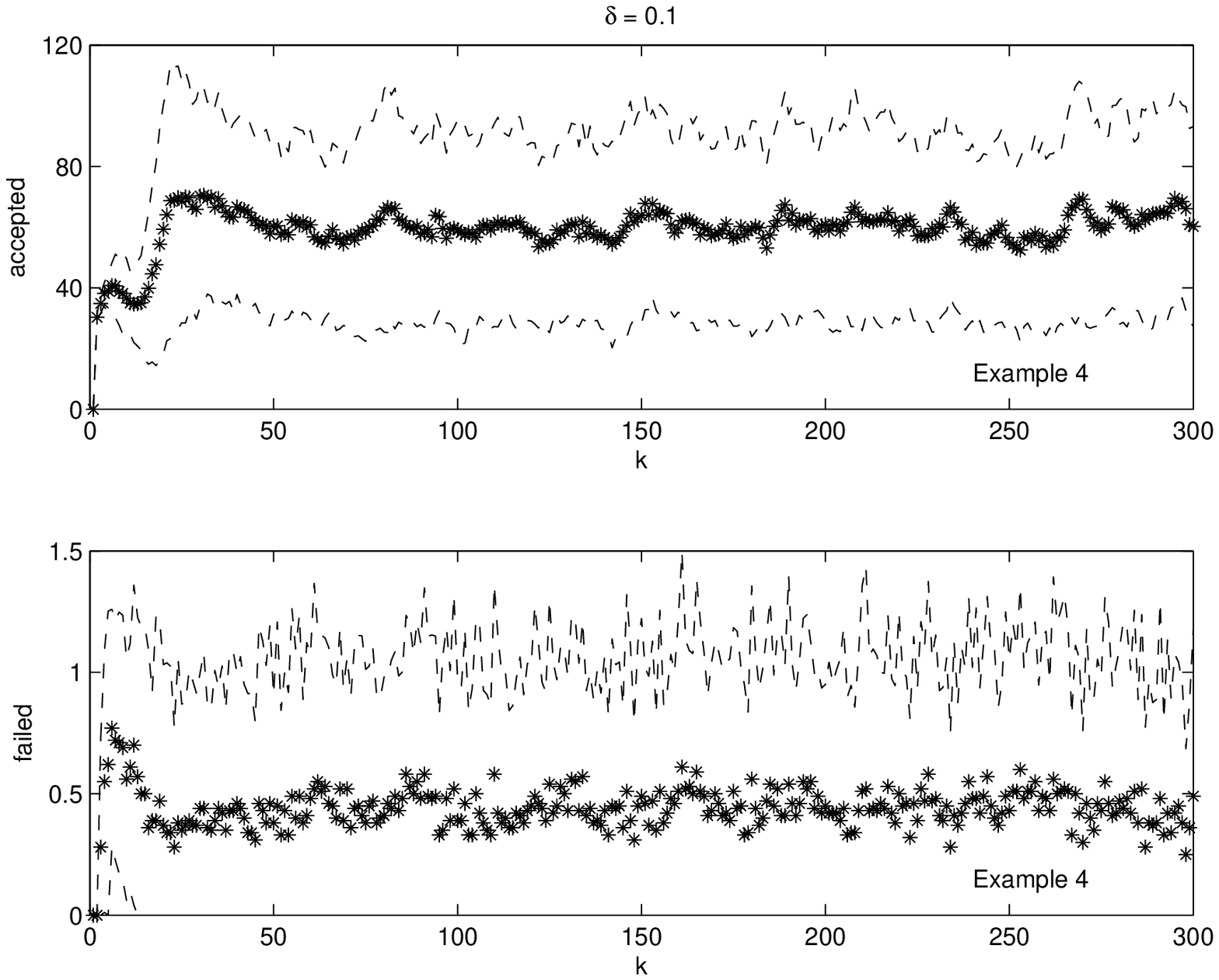}
\end{array}$
\caption{Average (*) and 90\% confidence limits (-) of accepted and
failed steps of the adaptive QML estimator at each $t_{k}\in
\{t\}_{N}$ in the Example 4.}
\end{figure}

\end{document}